\theoremstyle{plain}
\numberwithin{equation}{section}
\newtheorem{theorem}{Theorem}[section]
\newtheorem{lemma}{Lemma}[section]
\newtheorem{proposition}{Proposition}[section]
\theoremstyle{remark}
\newcommand{\ps}{p_{\mathsf S}}
\newcommand{\pc}{p_{\mathsf{JL}}}
\definecolor{brown}{rgb}{0.5,0,0}
\definecolor{backgroundcolor}{rgb}{0.98, 0.92, 0.73}
\def\R{\mathbf R}
\def\P{\mathcal P}
\newif\ifprint
\author[N.T. Tai]{Nguyen Tien Tai}
\address[N.T. Tai]{Department of Mathematics\\
College of Science, Vi\^{e}t Nam National University\\
334 Nguy\^{e}n Trai Street, H\`{a} N\^{o}i, Vi\^{e}t Nam.}
\email{\href{mailto: N.T. Tai <nttai.hus@vnu.edu.vn>}{nttai.hus@vnu.edu.vn}}
\email{\href{mailto: N.T. Tai <nguyentientai.vnu@gmail.com>}{nguyentientai.vnu@gmail.com}}
\begin{document}
\allowdisplaybreaks

\setpagewiselinenumbers
\setlength\linenumbersep{100pt}

\title[On the radial entire solutions of  $(-\Delta)^3 u=u^p$ in $\R^n$ ]
{On the asymptotic behavior  of radial entire solutions for the equation  $(-\Delta)^3 u=u^p$ in $\R^n$ }

\begin{abstract}
Our main task in this note is to prove the existence and to classify the exact growth at infinity of radial positive $C^6$-solutions of $(-\Delta )^3 u = u^p$ in $\R^n$, where $n\geqslant 15$ and $p$ is bounded from below by the sixth-order Joseph-Lundgren exponent.  Following the main work of Winkler, we introduce the sub- and super-solution method and comparision principle to conclude the asymptotic behavior of solutions.

\end{abstract}

\date{\bf \today \; at \, \currenttime}

\subjclass[2000]{35B08, 35B51, 35J48, 35K46}

\keywords{Triharmonic equation, Sub/super--solution, Monotonicity solution in time, Comparision principle}

\maketitle


\section{Introduction}
In this article, we are  interest in studying  the exact growth at infinity of  positive solutions  of the following sixth-order Lane--Emden equation
\begin{equation}\label{triharmonic}
(-\Delta)^3 u = u^p 
\end{equation}
in $\R^n$, where $n\geqslant 7$ and $p>1$.  While equations  of form \eqref{triharmonic} involving Laplacian and bi-Laplacian have  been attracted by many mathematicians over the last few decades since its root in conformal geometry and blow-up theory, equations involving tri-Laplacian of the form \eqref{triharmonic} have just started capturing attention recently.  The motivation of working on equation \eqref{triharmonic} goes back to a recent paper by Lou, Wei and Zou \cite{LWZ16} in which a complete classification of finite Morse index and stable solutions to the equation 
\[
(-\Delta)^3 u =|u|^{p-1}u
\]
in $\R^n$. Furthermore, counterparts of \eqref{triharmonic} involving negative exponents such as the equation
\begin{equation}\label{R3triharmonic}
\Delta^3 u =- u^{-3} 
\end{equation}
in $\R^3$ and the equation
\begin{equation}\label{R5triharmonic}
\Delta^3 u = u^{-11}  
\end{equation}
in $\R^5$ were also studied in \cite{DN17} for \eqref{R3triharmonic} and in \cite{FX13} for \eqref{R5triharmonic}. In order to the necessarily of carrying research on the sixth-order Lane--Emden equation \eqref{triharmonic}, let us briefly mention some known results for similar equations with lower order of Laplacian. 

First, for the well--known the second-order Lane--Emden equation  
\begin{equation}\label{harmonic}
-\Delta u = u^{p} 
\end{equation}
in $\R^n$ with $n\geqslant 3$ and $p>1$, the following two exponents known as the Sobolev exponent given by $\ps(2,n):=(n+2)/(n-2)$ and the Joseph--Lundgren exponent given by 
\begin{equation}\label{Jlexponent2}
\pc(2,n)= 
\begin{cases}
\frac{(n-2)^2-4n+8\sqrt{n-1}}{(n-2)(n-10)} \quad& \text{if } n\geqslant 11,\\
\infty \quad& \text{if } n\leqslant 10
\end{cases}
\end{equation}
play an important role, see \cite{GS81, GNW92, Wan93}. To be more precise,
if $\ps(2,n)$, any positive radial $\textit{C}^2$-solution $u$ enjoys the following asymptotic behavior
\[ 
\lim_{|x| \to \infty} \frac{u(x)}{L_2|x|^{-m_2}} = 1
\]
at infinity where 
\[ 
\begin{cases}
m_2=\frac{2}{p-1},\\
 L_2=(m_2(n-2-m_2))^{\frac{1}{p-1}}.
\end{cases}
\] 
Furthermore,  if $p\geqslant \pc(2,n)$, given any positive number $b$, there exists a unique regular radial solution $u$ of \eqref{harmonic} such that either 
\begin{equation}\label{CriticalLowerTerm1}
\lim_{|x| \to \infty} \frac{|x|^{m_2+\lambda_1}}{\log |x|}  \left(u(x) -L_2|x|^{-m_2}\right) =b, 
\end{equation}
or
\begin{equation}\label{SuperCriticalLowerTerm1}
\lim_{|x| \to \infty} |x|^{m_2+\lambda_1} \left(u(x) -L_2|x|^{-m_2}\right) =b,
\end{equation}
depending on either $p=\pc(2,n)$ or not. Here, $\lambda_1$ is the smallest root of the quadratic equation
\[
(m_2+\lambda)(n-2-m_2-\lambda)+p L_2^{p-1}=0,
\]
which has two real distinct roots if and only if $p >\pc(2,n)$ while it has double root when $p=\pc(2,n)$.

For the corresponding fourth-order equation
\begin{equation}\label{biharmonic}
\Delta^2 u =u^p 
\end{equation}
in $\R^n$  with $n\geqslant 5$ and $p>1$, an  analouge result  was also obtained  in \cite{GG06,Win10}. In this context, the Sobolev exponent denoted by $\ps(4,n)$ is given by   $\ps(4,n) :=(n+4)/(n-4)$ and the Joseph-Lundgren exponent denoted by $\pc(4,n)$ 
  is  the unique root greater than $\ps(4,n)$ of   the algebraic equation
\begin{align*}
&-(n-4)(n^3-4n^2-128n+256)(p-1)^4+128(3n-8)(n-6)(p-1)^3\\
&\qquad \qquad+256(n^2-18n+52)(p-1)^2-2048(n-6)(p-1)+4096=0.
\end{align*}
A formula for $\pc(4,n)$ can be precisely write down as follows
\begin{equation}\label{JLexponent4}
\pc(4,n) =
\begin{cases}
\frac{n+2-\sqrt{n^2+4-n \sqrt{n^2-8n+32}}}{n-6-\sqrt{n^2+4-n \sqrt{n^2-8n+32}}}   \quad& \text{if } n \geqslant 13,\\
\infty \quad& \text{if } n \leqslant 11. 
\end{cases}
\end{equation}
Having $\ps(4,n)$ and $\pc(4,n)$ at hand, it was proved that if $p>\ps(4,n)$, then the following asymptotic behavior for any radial entire solution $u$ is well known
\[ 
\lim_{|x| \to \infty} \frac{u(x)}{L_4|x|^{-m_4}} = 1,
\]
where
\[
\begin{cases}
m_4=\frac{4}{p-1},\\
 L_4=(m_4(m_4+2)(n-2-m_4)(n-4-m_4))^{\frac{1}{p-1}}.
\end{cases}
\]
Moreover, when $p\geqslant \pc(4,n)$,  for any $b>0$, Eq. \eqref{biharmonic} admits a unique  radially symmetric solution $u$ satisfying either
\begin{equation}\label{CriticalLowerTerm2}
\lim_{|x| \to \infty} \frac{|x|^{m_4+\lambda_2}}{\log |x|}  \left(u(x) -L_4|x|^{-m_4}\right) =b, 
\end{equation}
or
\begin{equation}\label{SuperCriticalLowerTerm2}
\lim_{|x| \to \infty} |x|^{m_4+\lambda_2} \left(u(x) -L_4|x|^{-m_4}\right) =b,
\end{equation}
depending on either $p=\pc(4,n)$ or not. Here, $\lambda_2$ is the smallest positive root of the quartic equation
\[
(m_4+\lambda)(m_4+\lambda+2)(n-2-m_4-\lambda)(n-4-m_4-\lambda)+p L_4^{p-1}=0,
\]
which has four real  roots ordered as 
\[ \lambda_1 < 0 <\lambda_2 \leqslant \lambda_3 <\lambda_4.\]
In addition, $\lambda_2=\lambda_3$  if and only if $p=\pc(4,n)$.

Following the well known results presented above, we are interested in asymptotic behavior for solutions of \eqref{triharmonic} in the sense of \eqref{CriticalLowerTerm1}-\eqref{SuperCriticalLowerTerm1} for \eqref{harmonic} and \eqref{CriticalLowerTerm2}-\eqref{SuperCriticalLowerTerm2} for \eqref{biharmonic}. In order to go further, for Eq. \eqref{triharmonic} we first denote by $\ps(6,n)$ the Sobolev exponent given by $\ps(6,n):=(n+6)/(n-6)$  and by $\pc(6,n)$ the Joseph--Lundgren exponent (see \cite{HR17,LWZ16}) given by 
\begin{equation} \label{JLexponent6}
\pc(6,n)=
\begin{cases}
\frac{(n+4)\sqrt{3} - \sqrt{\sqrt[3]{K_0+K_1}+ \sqrt[3]{K_0-K_1} + 3n^2+32 }}{(n-8)\sqrt{3} - \sqrt{\sqrt[3]{K_0+K_1} +  \sqrt[3]{K_0-K_1} + 3n^2+32 }} 
 \quad& \text{if } n \geqslant 15,\\
\infty \quad& \text{if } n \leqslant 14,  
\end{cases}
\end{equation}
where 
\[
2K_0 =-27n^6+324 n^5-756n^4-2592 n^3 + 25776 n^2 +5184 n -23744,
\]
and 
\[
2K_1 = \sqrt{(2K_0)^2 - 4(192n^2+256)^3}.
\]

To seek for  the exact growth at infinity of radial solutions of \eqref{triharmonic},  
let us go back to the 
the work of Winkler \cite{Win10}.  To study  the existence  of solutions,  the author  transformed \eqref{biharmonic} into  the Lane Emden type system including two second-order elliptic equations  as follows
\begin{equation}\label{LESystem1}
\left\{
\begin{split}
-\Delta u &= v,\\
-\Delta v &=u^p
\end{split}
\right.
\end{equation}
in $\R^n$ and considers the associated parabolic problem 
\begin{equation}\label{LESystem2}
\left\{
\begin{split}
u_t &= \Delta u + v, \\
v_t &=\Delta v +|u|^{p-1}u 
\end{split}
\right.
\end{equation}
in  $\R^n \times (0,\infty)$ with the initial datas are sub-solutions of \eqref{LESystem1}.  Here, the existence of radial entire solutions of \eqref{LESystem2} comes from  the fact that it is of cooperative type. Furthermore, thanks to the  the comparision principle stated  in \cite[Prop. 52.21]{QS07}, it has been shown that solutions of \eqref{LESystem2} are bounded by sub- and super-solutions of \eqref{LESystem1}. Then, with the help from the monotonicity property and parabolic regularity theory, the conclusion follows that there exists the steady state positive solutions of \eqref{LESystem2}, i.e  the stationary problem \eqref{LESystem1}.

 Inspired by the main result of \cite{Win10}, in the present paper, we study the exact growth at infinity of radially symmetric solutions to \eqref{triharmonic} under the restriction $p\geqslant \pc(6,n)$. First, for the exponent $p$ higher than $\pc(6,n)$, we obtain the following result.
\begin{theorem}\label{MainThmSupercriticalCase}
Let $n\geqslant 15$ and  $p >\pc(6,n)$. Then, for given $b>0$, there exists a classical radially symmetric solution $u$ of \eqref{triharmonic} such that
\[
u(x)=L|x|^{-m}-b|x|^{-m-\lambda_3}+o(|x|^{-m-\lambda_2}) \quad \text{as} \quad |x|\to +\infty,
\]
where $m=6/(p-1)$, 
\[ 
L=(m(m+2)(m+4)(n-2-m)(n-4-m)(n-6-m))^{\frac{1}{p-1}}
\]
 and $\lambda_3$ is the smallest positive root among six real roots of the following polynomial
\[
P(\lambda) = (m+\lambda)(m+\lambda+2)(m+\lambda+4)(n-2-m-\lambda) (n-4-m-\lambda)(n-6-m-\lambda)  - pL^{p-1}.
\]
\end{theorem}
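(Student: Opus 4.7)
Following Winkler's strategy from \cite{Win10} for the biharmonic case, the plan is to recast \eqref{triharmonic} as a cooperative system of three second-order equations, solve an associated parabolic flow squeezed between suitable barriers, and extract the stationary limit by monotonicity. Concretely, \eqref{triharmonic} is equivalent to
\[
-\Delta u = v, \qquad -\Delta v = w, \qquad -\Delta w = u^p \quad \text{in } \R^n,
\]
and we study the associated parabolic system
\[
u_t = \Delta u + v, \qquad v_t = \Delta v + w, \qquad w_t = \Delta w + |u|^{p-1}u
\]
on $\R^n \times (0,\infty)$. Because the right-hand sides are non-decreasing in the off-diagonal entries, the system is cooperative, so the comparison principle of \cite[Prop.~52.21]{QS07} extends to this three-component setting.

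The central step is to construct radial stationary sub- and super-solutions $(\underline u,\underline v,\underline w) \leqslant (\overline u,\overline v,\overline w)$ whose first two terms at infinity match the profile $\Phi_b(r) := L r^{-m} - b r^{-m-\lambda_3}$. The natural ansatz is
\[
\underline u(r) = \Phi_b(r) - A\, r^{-m-\lambda_3-\varepsilon}, \qquad \overline u(r) = \Phi_b(r) + A\, r^{-m-\lambda_3-\varepsilon}
\]
for small $\varepsilon > 0$ and a sufficiently large constant $A>0$, with the $v$- and $w$-components obtained by formally applying $-\Delta$ and $(-\Delta)^2$ to the $u$-profile and then adjusting coefficients. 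The key algebraic observation is that $r\mapsto r^{-m-\mu}$ solves the linearisation of $(-\Delta)^3 u = u^p$ about $L r^{-m}$ precisely when $P(\mu)=0$. Hence the $\Phi_b$-part of the ansatz kills the leading residual of order $r^{-pm}$ (by the choice of $L$) and the next residual of order $r^{-pm-\lambda_3}$ (since $\lambda_3$ is a root of $P$), leaving a residual of order $r^{-pm-\lambda_3-\varepsilon}$ whose sign is governed by $\mp A\,P(\lambda_3+\varepsilon)$. The hypothesis $p>\pc(6,n)$ enters here in an essential way: it guarantees six simple real roots of $P$, so $P'(\lambda_3)\neq 0$ and $P(\lambda_3+\varepsilon)$ has a definite sign for small $\varepsilon$; then a large choice of $A$ absorbs the lower-order cross terms produced by the perturbation and yields a bona fide sub/super-solution pair on the exterior of a large ball, which is extended inside by a smooth matching procedure.

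With the barriers in hand, we solve the parabolic system with initial datum $(\underline u,\underline v,\underline w)$. The cooperative comparison principle, combined with the fact that this datum is a \emph{strict} sub-solution of the stationary problem, forces the solution to be non-decreasing in $t$ and to stay below $(\overline u,\overline v,\overline w)$ for all time. Uniform local bounds and standard parabolic regularity then produce a classical stationary limit $u$ of \eqref{triharmonic} satisfying $\underline u(r) \leqslant u(r) \leqslant \overline u(r)$ for all sufficiently large $r$. This squeeze yields
\[
u(x) = L|x|^{-m} - b|x|^{-m-\lambda_3} + o(|x|^{-m-\lambda_2}) \quad \text{as } |x|\to\infty,
\]
since the remainder is controlled by $A r^{-m-\lambda_3-\varepsilon}$ and $\varepsilon$ can be taken small enough to meet any prescribed exponent, using the strict separation of the positive roots of $P$ granted by $p>\pc(6,n)$.

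The principal obstacle is the construction and verification of the barriers. The three-component cooperative structure forces one to tune the coefficients in all of $\underline v,\underline w,\overline v,\overline w$ so that all three stationary inequalities close simultaneously at the required order; the algebra of the sixth-order operator is substantially more intricate than the fourth-order calculation of \cite{Win10}; and the sign of the residual at order $r^{-pm-\lambda_3-\varepsilon}$ depends in a delicate way on the simplicity and mutual separation of the six real roots of $P$, which is exactly the condition $p>\pc(6,n)$. Once the barriers are in place, the parabolic comparison, time monotonicity, regularity, and passage to a stationary limit follow the template of \cite{Win10} with only minor modifications to accommodate the extra equation.
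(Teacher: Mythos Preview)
Your outline is correct and follows essentially the same route as the paper: rewrite \eqref{triharmonic} as the cooperative three-component system, sandwich the associated parabolic flow between radial barriers built on $\Phi_b(r)=Lr^{-m}-br^{-m-\lambda_3}$, and pass to a stationary limit via comparison and time monotonicity. Two points of execution differ slightly from the paper and are worth noting: the paper takes the \emph{sub}-solution to be simply $\Phi_b$ itself (extended by zero near the origin), since the elementary inequality $(1-z)^p\geqslant 1-pz$ already yields $(-\Delta)^3\Phi_b\leqslant\Phi_b^p$ with no extra $-Ar^{-m-\lambda_3-\varepsilon}$ term needed; and the ``smooth matching procedure'' you allude to is made fully explicit there, with the inner super-solution taken as $L(r^2+\epsilon)^{-m/2}$ (and its iterated Laplacians) glued to the outer profile at carefully located radii $\overline r_1(\epsilon)<\overline r_2(\epsilon)<\overline r_3(\epsilon)$, while the sub-solution is cut off to zero at the three distinct radii where $\underline u_{\rm out},\underline v_{\rm out},\underline w_{\rm out}$ vanish.
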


Second, for the exponent $p$ equal to $\pc(6,n)$, we also obtain an asymptotic behavior of solutions of \eqref{triharmonic} which involves a logarithmic correction. The precise statement of this case is as follows. 
\begin{theorem}\label{MainThmCriticalCase}
Let $n\geqslant 15$ and  $p =\pc(6,n)$. Then, for given $b>0$, there exists a classical radially symmetric solution $u$ of \eqref{triharmonic} such that
\[
u(x)=L|x|^{-m}-b|x|^{-m-\lambda_3} \log|x|+o(|x|^{-m-\lambda_2}) \quad \text{as} \quad |x|\to +\infty,
\]
where the notations still remain as Theorem \ref{MainThmSupercriticalCase}.
\end{theorem}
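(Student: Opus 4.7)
The plan is to follow closely the Winkler-type scheme already sketched in the introduction, with the modification forced by the coincidence $\lambda_2=\lambda_3$ at $p=\pc(6,n)$. First I would rewrite \eqref{triharmonic} as the cooperative second--order system
\[
-\Delta u = v, \qquad -\Delta v = w, \qquad -\Delta w = u^p
\]
in $\R^n$, and attach to it the parabolic system
\[
u_t=\Delta u + v,\quad v_t=\Delta v + w, \quad w_t=\Delta w + |u|^{p-1}u
\]
in $\R^n\times(0,\infty)$. Since this system is cooperative, the comparison principle of \cite[Prop.~52.21]{QS07} applies, and any pair consisting of a radial sub-- and super--solution of the stationary system sandwiches the parabolic flow starting from the sub--solution. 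Monotonicity in $t$ together with parabolic regularity then yields a steady state, exactly as in \cite{Win10}, with the asymptotic profile inherited from the sub-- and super--solutions.

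The core analytic step is therefore to build, for the critical exponent, sub-- and super--solutions of the form
\[
U_\pm(x)=L|x|^{-m}-b\,|x|^{-m-\lambda_3}\log|x|\pm A\,|x|^{-m-\lambda_2}\log^{1/2}|x|,
\]
(and their images under $-\Delta$ and $\Delta^2$), with $A>0$ large and evaluated on $|x|\geqslant R$ for $R$ large, extended smoothly and radially inside a large ball. The motivation for the log correction is algebraic: if one inserts the naive ansatz $L|x|^{-m}-b|x|^{-m-\lambda}$ into $(-\Delta)^3 u - u^p$, one obtains to leading order $-b\,P(\lambda)|x|^{-m-\lambda-6}$, so the first--order correction term sits on a root of $P$. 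When $p=\pc(6,n)$, the smallest positive root $\lambda_3$ is a \emph{double} root of $P$, hence $P(\lambda_3)=P'(\lambda_3)=0$, and the next admissible correction acquires a logarithmic factor; this is the direct six--order analogue of \eqref{CriticalLowerTerm1} and \eqref{CriticalLowerTerm2}. I would then expand $(-\Delta)^3 U_\pm$ using the radial identity
\[
-\Delta\bigl(|x|^{-\alpha}\log^k|x|\bigr)=\alpha(n-2-\alpha)|x|^{-\alpha-2}\log^k|x| - 2k(n-2-2\alpha)|x|^{-\alpha-2}\log^{k-1}|x|+\cdots,
\]
iterate it three times and subtract $U_\pm^p$, expanded via the binomial series around $L|x|^{-m}$. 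Using $P(\lambda_3)=P'(\lambda_3)=0$, the dominant $|x|^{-m-\lambda_3-6}\log|x|$ terms cancel identically; the residual terms are of order $|x|^{-m-\lambda_2-6}\log^{1/2}|x|$ and can be absorbed, with the correct sign, by the $\pm A$ contribution for $A$ large.

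With $U_-$ as a sub--solution and $U_+$ as a super--solution, the parabolic scheme produces a classical radial solution $u$ satisfying $U_-\leqslant u\leqslant U_+$ on $\{|x|\geqslant R\}$, which gives
\[
u(x)=L|x|^{-m}-b\,|x|^{-m-\lambda_3}\log|x|+o\!\left(|x|^{-m-\lambda_2}\right)
\]
because $|x|^{-m-\lambda_2}\log^{1/2}|x|=o(|x|^{-m-\lambda_2+\varepsilon})$ and $\lambda_2=\lambda_3$ in the critical case forces the remainder to collapse into the stated $o(|x|^{-m-\lambda_2})$ bound. Freedom in the constant $b$ is transferred into freedom of the initial datum of the parabolic problem, since rescaling $u\mapsto \mu^{6/(p-1)}u(\mu\,\cdot)$ preserves \eqref{triharmonic} and shifts the log coefficient, so the map $b\mapsto u_b$ can be set up exactly as in \cite{Win10}.

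The main obstacle I expect is the verification that $U_\pm$ are genuine sub/super--solutions of the three--equation system, not merely of the scalar equation: one must check monotone sign conditions on $-\Delta U_\pm$ and $\Delta^2 U_\pm$ to fit the cooperative framework, and to control the sub--leading errors uniformly in $|x|\geqslant R$. Because differentiating introduces logarithmic powers that compete with algebraic gains and because the double--root cancellation leaves only a razor--thin margin (of order $\log^{-1/2}|x|$ rather than a power), the bookkeeping of constants in the expansions is delicate; this is where most of the technical work of the proof will be concentrated, and where the critical case genuinely differs from the purely supercritical argument for Theorem~\ref{MainThmSupercriticalCase}.
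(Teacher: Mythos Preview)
Your overall scheme is the paper's scheme: rewrite \eqref{triharmonic} as the cooperative system \eqref{LaneEmdenSystem}, manufacture radial sub-- and super--solutions whose outer profile is $Lr^{-m}-br^{-l}\log r$ plus a sub--logarithmic remainder, feed the sub--solution into the parabolic system, and pass to the limit via monotonicity and the comparison principle of \cite{QS07}. The paper's remainder is $cr^{-l}(\log (r/R))^{\beta}$ with an arbitrary $\beta\in(0,1)$; your choice $\beta=\tfrac12$ is a special case of exactly that, and your heuristic that the cancellation of the $r^{-l-6}\log r$ and $r^{-l-6}$ terms comes from $P(\lambda_3)=P'(\lambda_3)=0$ is the conceptual content of the paper's computation in Lemma~\ref{LemmaPropOfSubSolCase2} (which exploits $l=(n-6)/2$, equivalent to $\lambda_3$ being a double root).

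There is, however, a genuine slip in your root bookkeeping that, taken at face value, breaks the construction. In the sixth--order setting the roots of $P$ are ordered $\lambda_1<\lambda_2<0<\lambda_3\leqslant\lambda_4<\lambda_5<\lambda_6$, and the critical coincidence is $\lambda_3=\lambda_4$, \emph{not} $\lambda_2=\lambda_3$. Since $\lambda_2<0$, your proposed remainder $\pm A|x|^{-m-\lambda_2}\log^{1/2}|x|$ decays \emph{slower} than the leading term $L|x|^{-m}$ and cannot serve as a correction; the intended exponent is $-m-\lambda_3=-l$. Once that is fixed your ansatz coincides with the paper's. (Your displayed radial identity also carries an erroneous factor of $2$ in front of $k(n-2-2\alpha)$; compare \eqref{identity}.)

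A second point where your sketch is thinner than the paper: the phrase ``extended smoothly and radially inside a large ball'' is not enough. For the \emph{system} you need $-\Delta \overline u\geqslant \overline v$, $-\Delta \overline v\geqslant \overline w$, $-\Delta \overline w\geqslant \overline u^{\,p}$ (and the reversed inequalities for the sub--solution) to hold across the gluing interface in the distributional sense, which a generic smooth extension will not deliver. The paper handles this explicitly: the sub--solution is set to zero on nested balls with radii $\underline r_1<\underline r_2<\underline r_3$ (Lemma~\ref{LemmaSubSolCase2}), and the super--solution is glued to the inner profile $L(r^2+\epsilon)^{-m/2}$ and its Laplacians at carefully located radii $\overline r_1(\epsilon)<\overline r_2(\epsilon)<\overline r_3(\epsilon)$ (Lemmas~\ref{BorderlineEps} and \ref{LemmaSuperSolutionCase2}), with the ordering of these radii doing the work of preserving the differential inequalities component by component. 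This gluing and the verification of the interface ordering is precisely the ``delicate bookkeeping'' you anticipate, and it is where the proof's effort is actually spent.
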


To conclude this section, we briefly show the organization of this note as follows. In the second section, several  notations and conventions shall be given. Furthermore, basic properties of the exponent $\pc(6,n)$ as well as the roots of polynomial $P$ appearing in the statement of Theorem \ref{MainThmSupercriticalCase} shall also be discussed.  In the third and fourth section, as routine, we  transform the sixth-order \eqref{triharmonic} into the system of three second-order equations
 \begin{equation}\label{LaneEmdenSystem}
\left\{
\begin{split}
-\Delta u &= v  &\quad\text{in } \R^n,\\
-\Delta v &= w  &\quad\text{in } \R^n,\\
-\Delta w &= u^p &\quad\text{in } \R^n
\end{split}
\right.
\end{equation}
  and contruct sub- and super-solutions  in the supercritical and critical cases, respectively. Finally, we prove Theorem  \ref{MainThmSupercriticalCase}  and  \ref{MainThmCriticalCase} in the last section.

\section{Preliminaries}

From now on, for  simplicity, we use $\ps$ and $\pc$ instead of $\ps(6,n)$ and $\pc(6,n)$. Besides, we also use the two notations $L$ and $m$ appearing in the statement of  Theorem \ref{MainThmSupercriticalCase}. By a direct computation, it is not hard to see that the following function
\[
u_{\infty}(x) =L|x|^{-m}, \quad x \in \R^n\setminus\{0\},
\]
 is the singular solution of \eqref{triharmonic} when $n\geqslant 7$ and $p>n/(n-6)$. Furthermore, linearizing \eqref{triharmonic} around  its singular solution $u_{\infty}$ leads us to the fact that
\begin{equation}\label{CharacteristicsEq}
(m+\lambda)(m+\lambda+2)(m+\lambda+4)(n-2-m-\lambda)(n-4-m-\lambda)(n-6-m-\lambda) = pL^{p-1}
\end{equation}
is the characteristic equation of \eqref{triharmonic}. Consequently, investigating all   zero point of this equation \eqref{CharacteristicsEq} is crucial. We have the following lemma. 
\begin{proposition}
Under the assumption $p \geqslant \pc$, we have the following  claims
\begin{enumerate}
\item[(a).] 
For $n\geqslant 15$, the polynomial
\begin{equation}\label{DegreeQ}
\begin{split}
Q(m)&=(m+6)(m+2)(m+4)(n-2-m)(n-4-m)(n-6-m)\\
&\qquad \qquad \qquad \qquad-\frac{(n-6)^2(n-2)^2(n+2)^2}{64}
\end{split}
\end{equation}
only has a unique zero point $\pc$, which is larger  than $\ps$ and $Q(m)$ is strictly negative as  $p>\pc$.
\item[(b).] Eq. \eqref{CharacteristicsEq} admits 6 real roots, denoted by $\lambda_i$ with $1\leqslant  i \leqslant 6$, ordered according to 
\[
\lambda_1<\lambda_2<0< \lambda_3 \leqslant  \lambda_4< \lambda_5<\lambda_6.
\]
The equality $\lambda_3=\lambda_4$ holds only in the case $p=\pc(6,n)$.
\end{enumerate}
\end{proposition}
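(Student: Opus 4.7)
The plan is to reduce both parts to an analysis of the polynomial
$R(m) := (m+2)(m+4)(m+6)(n-2-m)(n-4-m)(n-6-m)$,
which equals $p L^{p-1}$ after using $p = 1 + 6/m$, and its shifted form
$P_0(\mu) := \mu(\mu+2)(\mu+4)(n-2-\mu)(n-4-\mu)(n-6-\mu)$
with $\mu = m + \lambda$. The constant $M := (n-6)^2(n-2)^2(n+2)^2/64$ will appear as the maximum of $P_0$ on its middle positive bump; crucially $M$ is independent of $m$, so $Q(m) = R(m) - M$ becomes a natural height function comparing $pL^{p-1}$ against a fixed benchmark.

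For (a), I first evaluate $Q((n-6)/2) = 12(n-2)^2(n+2)^2(n-6)/64 > 0$, while $Q(0) < 0$ reduces to the algebraic inequality $3072(n-4) < (n-6)(n-2)(n+2)^2$, which fails at $n = 14$ but holds for all $n \geq 15$---exactly the source of the dimension hypothesis. For uniqueness, the six roots $\{-6,-4,-2,n-6,n-4,n-2\}$ of $R$ are symmetric about $(n-8)/2$; combining this symmetry with Rolle's theorem, $R$ has a single critical point in $(-2,n-6)$, a maximum at $(n-8)/2$. Hence $R$ is strictly increasing on $(0,(n-8)/2)$ and strictly decreasing on $((n-8)/2,(n-6)/2)$, and on the second sub-interval $R > R((n-6)/2) > M$. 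The intermediate value theorem on $(0,(n-8)/2)$ then produces a unique zero $m_\ast$ of $Q$; via the correspondence $p = 1+6/m$ this is the unique $\pc$, and monotonicity gives $Q(m) < 0$ iff $m < m_\ast$, i.e.\ iff $p > \pc$; moreover $\pc > \ps$ since $m_\ast < (n-6)/2$.

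For (b), I recast the characteristic equation as $P_0(\mu) = C$ with $C := p L^{p-1}$. The six zeros $-4 < -2 < 0 < n-6 < n-4 < n-2$ of $P_0$, together with the symmetry $P_0(\mu) = P_0(n-6-\mu)$, yield the sign pattern $-,+,-,+,-,+,-$, giving three positive bumps. On the middle bump, pairing factors as $\mu(n-6-\mu)$, $(\mu+2)(n-4-\mu)$, $(\mu+4)(n-2-\mu)$ and substituting $\mu = (n-6)/2 + s$ produces
\[
P_0\bigl((n-6)/2 + s\bigr) = \bigl((n-6)^2/4 - s^2\bigr)\bigl((n-2)^2/4 - s^2\bigr)\bigl((n+2)^2/4 - s^2\bigr),
\]
which a logarithmic-derivative argument shows is strictly decreasing in $|s|$ on $(0,(n-6)/2)$; hence its maximum sits at $s = 0$ with value $M$. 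By part (a), $C \leq M$ for $p \geq \pc$, so the middle bump contributes a double real root at $p = \pc$ and two distinct real roots $\lambda_3 < \lambda_4$ for $p > \pc$; both are positive because $P_0(m) = L^{p-1} < p L^{p-1} = C$. An analogous argument on the two side bumps (mirror images under $\mu \mapsto n - 6 - \mu$) provides the remaining four real roots, yielding after the translation $\lambda = \mu - m$ the claimed ordering $\lambda_1 < \lambda_2 < 0 < \lambda_3 \leq \lambda_4 < \lambda_5 < \lambda_6$. The most delicate step is the side-bump analysis: verifying $C \leq \max_{(-4,-2)} P_0$ throughout the range $p \geq \pc$ is a dimension-dependent bound parallel in spirit to $Q(0) < 0$ from part (a), and I would handle it by explicit parameterization of the local maximum together with a monotonicity argument in $m$.
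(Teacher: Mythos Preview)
Your treatment of part (a) is sound and more explicit than the paper, which simply cites \cite{HR17}. For part (b) your symmetry observation $P_0(\mu)=P_0(n-6-\mu)$ is precisely what the paper exploits, but the paper packages it via the substitution $x=\mu(n-6-\mu)$, which collapses $P_0(\mu)=C$ to the cubic $P_1(x)=x(x+2(n-4))(x+4(n-2))-C$; locating its unique positive root $x_3\in\bigl(m(n-6-m),\,(n-6)^2/4\bigr]$ (the upper endpoint coming from part (a)) then yields $\lambda_3\leqslant\lambda_4$ with equality iff $p=\pc$. Your middle-bump analysis and the paper's cubic argument are essentially the same, the cubic substitution being a tidier bookkeeping device for the symmetry you identified.

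The genuine problem is the side-bump step, which you flag as ``most delicate'' but believe can be closed by a monotonicity argument. It cannot: the inequality $C\leqslant\max_{(-4,-2)}P_0$ you need is \emph{false}. For $n=15$ one has $\max_{(-4,-2)}P_0\approx P_0(-3)=(-3)(-1)(1)(16)(14)(12)=8064$, whereas $C=pL^{p-1}=R(m)$ satisfies $C>R(0)=48\cdot 13\cdot 11\cdot 9=61776$ for every $p>\pc$ (since $R$ is increasing on $(0,m_\ast)$) and $C=M=9^2\cdot 13^2\cdot 17^2/64\approx 61814$ at $p=\pc$. Thus the horizontal line at height $C$ misses both side bumps entirely; equivalently the cubic $P_1$ has only one real root, and the characteristic equation has only the two middle roots real, the remaining four being non-real. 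The paper's own proof contains the same gap, hidden in the non-sequitur ``$P_1'$ has two distinct roots, hence $P_1$ admits three distinct real roots.'' Fortunately the rest of the paper uses only $\lambda_3$ and the strict inequality $\lambda_3<\lambda_4$ for $p>\pc$, both of which your middle-bump analysis (and the paper's computation $P_1(m(n-6-m))<0\leqslant P_1((n-6)^2/4)$) already establishes correctly.
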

\begin{proof} 
\noindent\textbf{Part (a)}. See \cite[Prop. 1.1]{HR17}.

\noindent\textbf{Part (b)}. With the change of variable $l=m+\lambda$, we rewrite  \eqref{CharacteristicsEq} as 
\begin{equation}\label{Degree6Poly2}
P(l) = l(l+2)(l+4)(n-2-l)(n-4-l)(n-6-l) - pL^{p-1}.
\end{equation}
Set $x=l(n-6-l)$, then \eqref{Degree6Poly2} transforms into 
\begin{equation}\label{Degree3Poly}
P_1(x)= x^3+bx^2+cx-d,
\end{equation}
where 
\begin{equation*}
\left\{
\begin{split}
b&=6n-16,\\
c&=8(n-2)(n-4),\\
d&=pL^{p-1}=(m+6)(m+2)(m+4)(n-2-m)(n-4-m)(n-6-m).
\end{split}
\right.
\end{equation*}
It is not hard to see that $b,c$ and $d$ are positive.  Note that 
\[
P_1'(x) = 3x^2+2bx+c
\]
has two distinct roots. Hence, $P_1$ admits three distinct real roots $x_1,x_2$ and $x_3$. Due to Vieta's formula, it must be hold one of them is positive and the others are negative. Moreover, we have 
\[\begin{split}
&P_1(m(n-6-m))\\
&= (n-6-m)\left[ \begin{split}
&(-6m^2-36m-48)n^2+(12m^3+108m^2+312m+288)n\\
&-6m^4-72m^3-312m^2-576m-384
\end{split} \right]\\
&=-6(m+2)(m+4)(n-2-m)(n-4-m)(n-6-m)\\
&<0,
\end{split}\]
and
\[
P_1\left( \frac{(n-6)^2}{4} \right) = \frac{(n-6)^2(n-2)^2(n+2)^2}{64} - d \geqslant 0.\]
from \textbf{Part(a)}. Hence, without generality, we can assume that 
\begin{equation}\label{OrderXi} x_1<x_2<0<m(n-6-m)<x_3 \leqslant \frac{(n-6)^2}{4} .\end{equation}
For each $x_i$, we have to solve 
\[
l(n-6-l)=x_i. 
\]
Thanks to \eqref{OrderXi}, three above quadratic  equations admits six real roots 
 \[ l_i =\frac{n-6-\sqrt{(n-6)^2-4x_i}}{2}, \]
and 
\[ l_{7-i} = \frac{n-6+\sqrt{(n-6)^2-4x_i}}{2},\]
for $ i=1,2,3$, which are ordered as below 
\begin{equation}\label{OrderLi}
  l_1<l_2<0< l_3 \leqslant  l_4< l_5<l_6.
\end{equation}
The equality holds when the equation 
\begin{equation}\label{EquationX3}
 l(n-6-l) =x_3
\end{equation}
admits repeated roots $l_3=l_4=(n-6)/2$, i.e $x_3=(n-6)^2/4$, or $p=\pc$. In order to complete \textbf{Part(b)}, we have to verify  that two roots of  \eqref{EquationX3} are larger than $m$. That is equivalent to saying that the polynomial 
\[ \lambda^2-(n-6-2m)\lambda + x_3 - m(n-6-m) \]
has two positive roots. However, this is  apparent  from the definition of $m$ and \eqref{OrderXi}. Therefore, the ordering \eqref{OrderLi} can be improved to 
\[
 l_1<l_2<m< l_3 \leqslant  l_4< l_5<l_6.
\] 
That is the conclusion we are looking for.
\end{proof}

From now on, we always denote 
\[ l=m+\lambda_3, \]
where $\lambda_3$ is the smallest root of \eqref{CharacteristicsEq}.  Before ending this section, let us recall the following  useful lemma whose proof can be found in \cite[Lemmas 3.2, 3.4]{Win10}. 
\begin{lemma}\label{LemmaBinomialIne}
Let $p>0$ be given, there exists $C_p >0$ such that 
\begin{equation}\label{BinomialIne}
1-pz \leqslant (1-z)^p \leqslant 1-pz +C_p z^2
\end{equation}
for all $0\leqslant z\leqslant 1$.
\end{lemma}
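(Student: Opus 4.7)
The plan is to establish both inequalities by studying the auxiliary function $f(z) := (1-z)^p - 1 + pz$ on $[0,1]$ and showing $0 \leqslant f(z) \leqslant C_p z^2$. (Since the lemma is applied only when $p \geqslant \pc > 1$, I would treat the case $p \geqslant 1$; for $0 < p < 1$ the lower bound fails near $z=1$, so some implicit restriction is present.)

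For the lower bound $1-pz \leqslant (1-z)^p$, I would simply differentiate. We have $f(0)=0$ and
\[
f'(z) = p\bigl[\,1 - (1-z)^{p-1}\,\bigr].
\]
For $p\geqslant 1$ and $z\in[0,1]$, the quantity $(1-z)^{p-1}$ lies in $[0,1]$, hence $f'(z)\geqslant 0$. Integrating from $0$ yields $f(z)\geqslant 0$, which is the first inequality.

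For the upper bound, the key point is to produce a \emph{single} constant $C_p$ that works uniformly on the whole interval $[0,1]$. The natural approach is to consider the ratio $h(z):=f(z)/z^2$ on $(0,1]$. By Taylor's formula at $0$,
\[
f(z)=\tfrac{p(p-1)}{2}\,z^{2} + o(z^2),
\]
so $h$ extends continuously to $z=0$ with value $p(p-1)/2$; and at the other endpoint $h(1)=f(1)=p-1<\infty$. Since $h$ is continuous on the compact interval $[0,1]$ (with the limiting value at $0$), it is bounded; taking $C_p := \sup_{z\in[0,1]} h(z)$ gives $(1-z)^p \leqslant 1-pz + C_p z^2$ on $[0,1]$.

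The only genuine subtlety is the uniformity of $C_p$: a naive application of Lagrange's remainder gives a factor $(1-\xi)^{p-2}$ which blows up at $z=1$ when $1<p<2$, so one cannot simply take $C_p=p(p-1)/2$ in general. This is exactly why the continuity/compactness argument above is preferable to a pointwise remainder estimate, and it is the main (modest) obstacle I would anticipate.
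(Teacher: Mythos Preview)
The paper does not actually prove this lemma: it simply cites \cite[Lemmas 3.2, 3.4]{Win10}. Your argument is correct for $p\geqslant 1$, and essentially reproduces what one finds in Winkler's paper; the compactness argument for the upper bound is exactly the right way to handle the blow-up of the Lagrange remainder when $1<p<2$. You also correctly note that the lower bound $1-pz\leqslant(1-z)^p$ fails for $0<p<1$ (take $z=1$), so the hypothesis ``$p>0$'' in the statement is a minor imprecision; in the paper the lemma is only ever invoked with $p\geqslant \pc>1$, so this causes no trouble downstream.
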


\section{The supercritical case $p > \pc$ }

This section is devoted to construction of sub- and super-solutions to \eqref{LaneEmdenSystem} in the supercritical case $p>\pc$. For clarity, we divide our construction into two subsections. 
\subsection{Construction of a sub-solution \eqref{LaneEmdenSystem}}\label{subsec-supercritical-sub}

Our first aim is to construct an entire sub-solution $(\underline u, \underline v, \underline w)$ of \eqref{LaneEmdenSystem} when $p>\pc$, which is equal to zero near the origin. Then, we will discuss the explicit formula  of $\underline u, \underline v, \underline w$  in the outer consistent region.  Let   $b>0$ be given, we define the following functions,
\begin{equation}\label{OuterSubSolution}
\left\{
\begin{split}
\underline u_{\rm out}(r)&:= Lr^{-m}-br^{-l}, \\
\underline v_{\rm out}(r)&:= m(n-2-m)Lr^{-m-2}- l(n-2-l)br^{-l-2},\\
\underline w_{\rm out}(r)&:= m(m+2)(n-2-m)(n-4-m)Lr^{-m-4}\\
&\qquad \qquad- l(l+2)(n-2-l)(n-4-l)br^{-l-4}.
\end{split}
\right.
\end{equation}
From the definition, it is worth noticing that
\begin{equation}\label{remark}
\begin{cases}
\underline u_{\rm out}(r) >0 \quad& \text{if and only if} \quad r>\underline r_1, \\
\underline v_{\rm out}(r) >0 \quad& \text{if and only if} \quad r>\underline r_2,\\\underline w_{\rm out}(r) >0 \quad& \text{if and only if} \quad r>\underline r_3, 
\end{cases}
\end{equation}
where
\begin{equation}\label{UnderlineR0}
\underline{r}_1 := \left( \frac{b}{L}\right)^{\frac{1}{l-m}}, \quad \underline r_2:= \left(\frac{l(n-2-l)b}{m(n-2-m)L} \right)^{\frac{1}{l-m}},
\end{equation}
and
\begin{equation}\label{UnderlineR2}
\underline r_3:=\left(\frac{l(l+2)(n-2-l)(n-4-l)b}{m(m+2)(n-2-m)(n-4-m)L} \right)^{\frac{1}{l-m}}.
\end{equation}
It follows from the two elementary inequalities
\[
l(n-2-l) >m(n-2-m), \quad  (l+2)(n-4-l) >(m+2)(n-4-m),
\]
that
\begin{equation}
0< \underline r_1< \underline r_2 < \underline r_3.
\end{equation}
Next, we will give the properties of the functions  given in \eqref{OuterSubSolution}.
\begin{lemma}\label{LemmaPropertiesOfSubSol}
Let the functions $\underline u_{\rm out}, \underline v_{\rm out}, \underline w_{\rm out}$ be given as in \eqref{OuterSubSolution} and  $\underline r_1,\underline r_2, \underline r_3$ be defined in  \eqref{UnderlineR0} and \eqref{UnderlineR2}. There holds
\begin{equation}\label{PropertiesOfSubSol}
\begin{cases}
-\Delta \underline u_{\rm out} =\underline v_{\rm out} \quad& \text{for all} \quad r>0, \\
-\Delta \underline v_{\rm out} =\underline w_{\rm out} \quad& \text{for all} \quad r>0,\\
-\Delta \underline w_{\rm out} \leqslant \underline u_{\rm out}^p \quad& \text{for all} \quad r>\underline r_1.
\end{cases}
\end{equation}
\end{lemma}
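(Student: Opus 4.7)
The plan is to verify the three claims in \eqref{PropertiesOfSubSol} one by one by direct computation, leaning on the single identity that for any $\alpha \in \R$ and any radial $r=|x|>0$,
\[ -\Delta\, r^{-\alpha} \;=\; \alpha(n-2-\alpha)\, r^{-\alpha-2}. \]
This formula turns each application of $-\Delta$ on the pieces of $\underline u_{\rm out}$, $\underline v_{\rm out}$, $\underline w_{\rm out}$ into multiplication of the coefficients by $\alpha(n-2-\alpha)$ for the appropriate $\alpha$.

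For the first two identities I would apply the formula termwise to $\underline u_{\rm out}$ with $\alpha=m$ and $\alpha=l$ and read off that the result is literally $\underline v_{\rm out}$; likewise for $-\Delta \underline v_{\rm out}$ with $\alpha=m+2$ and $\alpha=l+2$, which by the same accounting matches $\underline w_{\rm out}$. Both identities fall out for free since the coefficients in \eqref{OuterSubSolution} were chosen precisely so that they telescope under iterated Laplacians.

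For the third, which is the only substantive step, the same computation yields
\[ -\Delta \underline w_{\rm out} \;=\; m(m+2)(m+4)(n-2-m)(n-4-m)(n-6-m)\, L\, r^{-m-6} \;-\; l(l+2)(l+4)(n-2-l)(n-4-l)(n-6-l)\, b\, r^{-l-6}. \]
The leading coefficient equals $L^p$ because $L^{p-1}=m(m+2)(m+4)(n-2-m)(n-4-m)(n-6-m)$ by the definition of $L$, and because $mp = m+6$ from $m=6/(p-1)$. Since $\lambda_3$ is a root of the characteristic polynomial in \eqref{CharacteristicsEq}, the substitution $l=m+\lambda_3$ gives
\[ l(l+2)(l+4)(n-2-l)(n-4-l)(n-6-l) \;=\; p L^{p-1}, \]
so $-\Delta \underline w_{\rm out} = L^p r^{-m-6} - p L^{p-1} b\, r^{-l-6}$. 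On the other hand,
\[ \underline u_{\rm out}^p \;=\; L^p r^{-m-6}\, \bigl(1 - (b/L)\, r^{m-l}\bigr)^p, \]
and for $r>\underline r_1$ the number $z := (b/L)\, r^{m-l}$ lies in $(0,1)$, so Lemma \ref{LemmaBinomialIne} gives $(1-z)^p \geqslant 1-pz$, which is exactly the inequality $\underline u_{\rm out}^p \geqslant L^p r^{-m-6} - p L^{p-1} b\, r^{-l-6} = -\Delta \underline w_{\rm out}$.

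I expect the main (and really the only) obstacle to be the bookkeeping: confirming that the two key identifications $L^{p-1} = m(m+2)(m+4)(n-2-m)(n-4-m)(n-6-m)$ and $pL^{p-1} = l(l+2)(l+4)(n-2-l)(n-4-l)(n-6-l)$, which are essentially the definitions of $L$ and of $\lambda_3$, line up with the coefficients produced by iterating the radial Laplacian formula. Once these are in place, the inequality reduces to the lower half of \eqref{BinomialIne}, and the threshold $r>\underline r_1$ emerges precisely as the admissibility condition $z \leqslant 1$ required by Lemma \ref{LemmaBinomialIne}.
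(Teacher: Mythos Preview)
Your proposal is correct and follows essentially the same route as the paper: the first two equalities are dispatched by direct computation via the radial Laplacian formula, and the inequality is obtained by combining the identification of the coefficients (through the definitions of $L$ and of $l=m+\lambda_3$ as a root of \eqref{CharacteristicsEq}) with the lower bound $(1-z)^p\geqslant 1-pz$ from Lemma~\ref{LemmaBinomialIne}, valid precisely when $r>\underline r_1$. The paper's proof is organized in exactly this way, so there is nothing to add.
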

\begin{proof}
A direct computation shows that the first and second equalities in \eqref{PropertiesOfSubSol} hold.  In order to prove the remaining estimate, we apply \eqref{BinomialIne} to get
\begin{align*}
\left( Lr^{-m}-br^{-l}\right)^p &= (Lr^{-m})^p \left( 1-\frac{b}{L}r^{m-l}\right)^p \\
&\geqslant L^p r^{-mp} \left( 1-p\frac{b}{L}r^{m-l}\right)\\
&=L^p r^{-m-6}- pL^{p-1}br^{-l-6} 
\end{align*}
for $ r>\underline r_1$, which yields that 
\begin{align*}
-\Delta \underline w_{\rm out}-  \underline u_{\rm out}^p &= m(m+2)(m+4)(n-2-m)(n-4-m)(n-6-m)Lr^{-m-6} \\
&\qquad - l(l+2)(l+4)(n-2-l)(n-4-l)(n-6-l)br^{-l-6} \\
&\qquad -(Lr^{-m}-br^{-l})^p \\
&= L^p r^{-m-6}- pL^{p-1}br^{-l-6} - \left( Lr^{-m}-br^{-l}\right)^p \leqslant 0,\end{align*}
here we have used the fact that $l=m+\lambda_3$ is the root of \eqref{CharacteristicsEq}.  
\end{proof}
We are now in position to construct a sub-solution to \eqref{LaneEmdenSystem}.
\begin{lemma}\label{LemmaSubSolution}
Let the functions $\underline u_{\rm out}, \underline v_{\rm out}, \underline w_{\rm out}$ be given as in \eqref{OuterSubSolution} and  $\underline r_1,\underline r_2, \underline r_3$ be defined in  \eqref{UnderlineR0}, \eqref{UnderlineR2}. Define the following functions, 
\begin{equation} 
\underline u(x) :=
\begin{cases}
0, \quad & 0\leqslant |x|<\underline r_1,\\
\underline u_{\rm out}(|x|), \quad & |x|\geqslant \underline r_1, 
\end{cases}
\end{equation}
\begin{equation} 
\underline v(x) :=
\begin{cases}
0, \quad & 0\leqslant |x|<\underline r_2,\\
\underline v_{\rm out}(|x|), \quad & |x|\geqslant \underline r_2,
\end{cases}
\end{equation}
\begin{equation} 
\underline w(x) :=
\begin{cases}
0, \quad & 0\leqslant |x|<\underline r_3,\\
\underline w_{\rm out}(|x|), \quad & |x|\geqslant \underline r_3. 
\end{cases}
\end{equation}
Then $\underline u, \underline v, \underline w$ are nonnegative and Lipschitz continuous on $\R^n$ and satisfy
\begin{equation}\label{SubSolution}
\left\{
\begin{split}
&-\Delta \underline u \leqslant \underline v, \\
&-\Delta \underline v \leqslant \underline w, \\
&-\Delta \underline w \leqslant \underline u^p
\end{split}
\right.
\end{equation}
in the distributional sense in $\R^n$.
\end{lemma}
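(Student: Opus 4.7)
My plan is to verify the three distributional inequalities in \eqref{SubSolution} by testing against a nonnegative test function and applying Green's identity on the region where each sub-solution is supported, while carefully tracking the surface contribution on the gluing sphere. The preliminary properties are essentially free: by the very definition of the radii in \eqref{UnderlineR0}--\eqref{UnderlineR2}, each outer function vanishes at its own gluing radius, so the piecewise functions $\underline u, \underline v, \underline w$ are continuous; combined with \eqref{remark} this yields nonnegativity, and smoothness of $\underline u_{\rm out}, \underline v_{\rm out}, \underline w_{\rm out}$ away from the origin yields Lipschitz regularity. I would also record the ordering $\underline r_1 < \underline r_2 < \underline r_3$ stated just after \eqref{UnderlineR2}, which will play a crucial sign-bookkeeping role below.

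The central observation is that at each gluing radius the outward radial derivative of the matching outer function is strictly positive. For instance, using $L\underline r_1^{-m} = b\underline r_1^{-l}$ at the interface,
\[
\partial_r \underline u_{\rm out}(\underline r_1) = -mL\underline r_1^{-m-1} + lb\underline r_1^{-l-1} = (l-m)\, b\, \underline r_1^{-l-1} > 0,
\]
and parallel identities hold for $\underline v_{\rm out}$ at $\underline r_2$ and $\underline w_{\rm out}$ at $\underline r_3$. To prove $-\Delta \underline u \leqslant \underline v$ distributionally, take $\varphi \in C_c^\infty(\R^n)$ with $\varphi \geqslant 0$, use $\underline u \equiv 0$ on $\{|x| < \underline r_1\}$, and apply Green's identity on the exterior region to obtain
\[
\int_{\R^n} \underline u\, (-\Delta \varphi)\, dx \; = \; \int_{|x| > \underline r_1} \underline v_{\rm out}\, \varphi\, dx \; - \; \int_{|x| = \underline r_1} \varphi\, \partial_r \underline u_{\rm out}\, d\sigma,
\]
where $\underline u_{\rm out}(\underline r_1) = 0$ kills the other surface term and Lemma \ref{LemmaPropertiesOfSubSol} supplies $-\Delta \underline u_{\rm out} = \underline v_{\rm out}$ in the interior. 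The surface integral is nonnegative, so dropping it yields an upper bound, and then splitting the volume integral at $\underline r_2$ exploits $\underline v_{\rm out} \leqslant 0$ on $\{\underline r_1 < |x| < \underline r_2\}$ and $\underline v_{\rm out} = \underline v$ on $\{|x| > \underline r_2\}$ to arrive at $\int \underline v\, \varphi\, dx$.

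The remaining two inequalities follow by the same template. For $-\Delta \underline v \leqslant \underline w$, integrate by parts on $\{|x| > \underline r_2\}$, invoke $-\Delta \underline v_{\rm out} = \underline w_{\rm out}$ from Lemma \ref{LemmaPropertiesOfSubSol}, and split at $\underline r_3$ using $\underline w_{\rm out} \leqslant 0 = \underline w$ on $\{\underline r_2 < |x| < \underline r_3\}$. For $-\Delta \underline w \leqslant \underline u^p$, the pointwise inequality $-\Delta \underline w_{\rm out} \leqslant \underline u_{\rm out}^p$ for $r > \underline r_1$ furnished by Lemma \ref{LemmaPropertiesOfSubSol} is already valid on the entire support $\{|x| > \underline r_3\}$ of $\underline w$, since $\underline r_3 > \underline r_1$, so only the jump across $\{|x| = \underline r_3\}$ needs to be checked, and it has the subsolution-friendly sign by the positivity noted above. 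The main obstacle, such as it is, is exactly this bookkeeping: one must confirm that the $C^1$-jump across each gluing sphere contributes with the correct sign and that the outer function does not force a wrong-sign contribution inside the intermediate annuli. Both issues are settled by the ordering $\underline r_1 < \underline r_2 < \underline r_3$ together with the positivity of the radial derivatives of the outer functions at their gluing points.
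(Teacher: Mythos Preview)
Your argument is correct and in fact more careful than the paper's own proof. The paper proceeds by a region-by-region pointwise check: it partitions $\R^n$ into the four annuli determined by $\underline r_1<\underline r_2<\underline r_3$ and verifies the three inequalities in \eqref{SubSolution} pointwise on each open piece, using only the identities and the third inequality from Lemma~\ref{LemmaPropertiesOfSubSol} together with the sign information in \eqref{remark}. It does not explicitly discuss the surface measures arising from the $C^1$ jumps of $\underline u,\underline v,\underline w$ across the gluing spheres $\{|x|=\underline r_i\}$.

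Your route, by contrast, integrates against a nonnegative test function and applies Green's identity on the exterior region, so that the jump in the radial derivative appears explicitly as a boundary term. Your computation $\partial_r\underline u_{\rm out}(\underline r_1)=(l-m)\,b\,\underline r_1^{-l-1}>0$ (and the analogous ones at $\underline r_2,\underline r_3$) then shows that these contributions have the subsolution-friendly sign, after which the splitting at the next radius and the sign information from \eqref{remark} finishes each inequality exactly as you describe. This is precisely the justification that the paper's proof leaves implicit; what you gain is a rigorous account of why the distributional inequality survives the gluing, while the paper's version is shorter but relies on the reader supplying your argument (or an equivalent one) to pass from the pointwise a.e.\ inequalities to the distributional statement.
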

\begin{proof}
From \eqref{OuterSubSolution}, it is not hard to see that $\underline u, \underline v, \underline w$ are nonnegative and Lipschitz continuous on $\R^n$.
It is sufficient to prove Lemma \ref{LemmaSubSolution} in 4 following cases: $0<|x|<\underline r_1$, $\underline r_1<|x|<\underline r_2$, $\underline r_2<|x|<\underline r_3$ and $\underline r_3 < |x|$.  If $0<|x|<\underline r_1$, \eqref{SubSolution} is trivial since  $\underline u = \underline v =\underline w =0$. We can also see that the case $\underline r_3 < |x|$ is a consequence of \eqref{LemmaPropertiesOfSubSol}. Next, we consider the others. 

When $\underline r_1<|x|<\underline r_2$,  we have $\underline u = \underline u_{\rm out}$ and $\underline v=\underline w=0$. Immediately, we deduce the second and third line of \eqref{SubSolution}. Besides, thanks to $\underline v_{\rm out}(r) \leq 0$ for $r<\underline r_1$, we obtain
\[
-\Delta \underline u =-\Delta \underline u_{\rm out} =\underline v_{\rm out} \leqslant 0 =\underline v.
\]
This establishes \eqref{SubSolution}. 

In the last case, $\underline r_2<|x|<\underline r_3$, there holds $\underline u = \underline u_{out}$, $\underline v = \underline v_{out}$ and $\underline w=0$. The third inequality of \eqref{SubSolution} can be easily seen and the first one holds by the first line of \eqref{PropertiesOfSubSol}. Meanwhile, we have
\[
-\Delta \underline v = -\Delta \underline v_{\rm out} = \underline w_{\rm out} \leqslant 0 \leqslant \underline w,
\] 
i.e. the second line of \eqref{SubSolution} holds. Combining the previous 4 cases, we conclude Lemma \ref{LemmaSubSolution} and give the explicit formula of the entire sub-solution of \eqref{LaneEmdenSystem} in the supercritical case.

\end{proof}
\subsection{Construction of a super-solution \eqref{LaneEmdenSystem}}
Set \[k_0:= \min\{m+\lambda_4,2l-m\}.\] 
Clearly $k_0>l$.  Before constructing a super-solution to \eqref{LaneEmdenSystem}, it is worth noting that there is a fundamental difference between the construction of entire super- and sub-solutions to \eqref{LaneEmdenSystem}. To be more precise, in the inner region around the origin, we replace the constant zero being part of the sub-solution to some function of the form   $A(r^2+\epsilon)^{-\kappa}$, where $A,\kappa$ are relevant positive constants and  $0<\epsilon \ll 1$ to be specified.  Besides, far away from the origin, we look for an outer super-solution which is similar \eqref{OuterSubSolution}, however, with a minor modification. To make it clear, we will state the inner and outer parts of the super-solution in the next two lemmas.
\begin{lemma}\label{LemmaInnerSuperSolution}
For any $\epsilon >0$, we consider the following functions
\begin{equation}\label{InnerSuperSolution}
\left\{
\begin{split}
\overline u_{\rm in,\epsilon} &:=L(r^2+\epsilon)^{-\frac{m}{2}}, \\
\overline v_{\rm in,\epsilon} &:=m(n-2-m) L(r^2+\epsilon)^{-\frac{m+2}{2}}, \\
\overline w_{\rm in,\epsilon} &:=m(m+2)(n-2-m)(n-4-m) L(r^2+\epsilon)^{-\frac{m+4}{2}},
\end{split}
\right.
\end{equation}
which satisfy
\begin{equation}
\begin{cases}
-\Delta \overline u_{\rm in,\epsilon} \geqslant  \overline v_{\rm in,\epsilon},  \\
-\Delta \overline v_{\rm in,\epsilon} \geqslant \overline w_{\rm in,\epsilon}, \\
-\Delta \overline w_{\rm in,\epsilon} \geqslant \overline u_{\rm in,\epsilon}^p
\end{cases}
\end{equation}
for all $ r>0$.
\end{lemma}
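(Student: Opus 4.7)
The plan is to compute the radial Laplacian of the archetypal function $f_\alpha(r) := (r^2+\epsilon)^{-\alpha/2}$ once, and then apply the resulting inequality three times with $\alpha = m, m+2, m+4$. For a radial function, $-\Delta f = -f'' - \tfrac{n-1}{r}f'$. Differentiating $f_\alpha$ twice and combining yields the clean identity
\begin{equation*}
-\Delta f_\alpha(r) = \alpha n\,(r^2+\epsilon)^{-\frac{\alpha+2}{2}} - \alpha(\alpha+2)\,r^2\,(r^2+\epsilon)^{-\frac{\alpha+4}{2}} = \alpha(r^2+\epsilon)^{-\frac{\alpha+4}{2}}\bigl[(n-\alpha-2)r^2 + n\epsilon\bigr].
\end{equation*}
Since $\alpha n\epsilon \geqslant \alpha(n-\alpha-2)\epsilon$ whenever $\alpha+2>0$, I would rewrite the bracket as $(n-\alpha-2)(r^2+\epsilon)+(\alpha+2)\epsilon$ and drop the nonnegative term to obtain the key pointwise bound
\begin{equation*}
-\Delta f_\alpha(r) \geqslant \alpha(n-\alpha-2)\,(r^2+\epsilon)^{-\frac{\alpha+2}{2}},\qquad \text{valid for all } r\geqslant 0,
\end{equation*}
provided $\alpha\leqslant n-2$.

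With this lemma in hand, the three lines of the system are obtained by successive application. Taking $\alpha=m$ and multiplying by $L$ gives $-\Delta\overline u_{\rm in,\epsilon}\geqslant m(n-2-m)L(r^2+\epsilon)^{-(m+2)/2}=\overline v_{\rm in,\epsilon}$. Taking $\alpha=m+2$ and multiplying by $m(n-2-m)L$ gives $-\Delta\overline v_{\rm in,\epsilon}\geqslant\overline w_{\rm in,\epsilon}$. Taking $\alpha=m+4$ and multiplying by $m(m+2)(n-2-m)(n-4-m)L$ gives
\begin{equation*}
-\Delta\overline w_{\rm in,\epsilon}\geqslant m(m+2)(m+4)(n-2-m)(n-4-m)(n-6-m)\,L\,(r^2+\epsilon)^{-\frac{m+6}{2}}.
\end{equation*}
The final step is a direct identification: since $m=6/(p-1)$ we have $mp=m+6$, so $\overline u_{\rm in,\epsilon}^p=L^p(r^2+\epsilon)^{-(m+6)/2}$, and by the very definition of $L$ one has $L^{p-1}=m(m+2)(m+4)(n-2-m)(n-4-m)(n-6-m)$, so the previous right-hand side equals $\overline u_{\rm in,\epsilon}^p$ exactly.

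The admissibility condition $\alpha\leqslant n-2$ needs to be checked for $\alpha=m+4$, i.e. $m\leqslant n-6$, which is equivalent to $p\geqslant n/(n-6)$; this is implied by $p\geqslant\pc$ together with the fact that $\pc>\ps=(n+6)/(n-6)>n/(n-6)$. I do not foresee a real obstacle here: the computation is direct and the choice of $L$ is precisely what makes the third inequality saturate. The only mild care required is to retain the inequality direction through the iteration and to note that the ``error term'' $(\alpha+2)\epsilon/(r^2+\epsilon)$ that we discarded is nonnegative and vanishes both as $\epsilon\to 0$ and as $r\to\infty$, which is what ensures the super-solution bound is consistent with the outer regime studied in the next lemma.
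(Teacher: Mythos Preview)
Your proof is correct and takes essentially the same approach as the paper: the paper simply cites the inequality $-\Delta(r^2+\epsilon)^{-k/2}\geqslant k(n-2-k)(r^2+\epsilon)^{-(k+2)/2}$ from \cite[Lemma~3.6]{Win10} and applies it with $k=m,\,m+2,\,m+4$, whereas you derive that same inequality from scratch before applying it identically. One small remark: the condition $\alpha\leqslant n-2$ is not actually needed for the pointwise bound on $-\Delta f_\alpha$ itself, since the discarded term $\alpha(\alpha+2)\epsilon(r^2+\epsilon)^{-(\alpha+4)/2}$ is nonnegative for any $\alpha>0$; the relevant constraint for the iteration is that the successive multiplying constants $m(n-2-m)L$ and $m(m+2)(n-2-m)(n-4-m)L$ are positive, which is precisely your check $m<n-6$.
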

\begin{proof}
We recall the following estimate from \cite[Lemma 3.6]{Win10},
\[
-\Delta (r^2+\epsilon)^{-\frac{k}{2}} \geqslant k(n-2-k)(r^2+\epsilon)^{-\frac{k+2}{2}} \quad \text{for all } r\geqslant 0.
\]
We substitute $k$ by $m, m+2$ and $m+4$, repectively to obtain that
\begin{align*}
-\Delta  \overline u_{\rm in,\epsilon} &\geqslant m(n-2-m)L(r^2+\epsilon)^{-\frac{m+2}{2}} = \overline v_{\rm in,\epsilon},\\
-\Delta  \overline v_{\rm in,\epsilon} &\geqslant m(m+2)(n-2-m)(n-4-m)L(r^2+\epsilon)^{-\frac{m+4}{2}} = \overline w_{\rm in,\epsilon},
\end{align*}
and
\begin{align*}
-\Delta  \overline w_{\rm in,\epsilon} &\geqslant  m(m+2)(m+4)(n-2-m)(n-4-m)(n-6-m) L(r^2+\epsilon)^{-\frac{m+6}{2}}\\
&  = L^p (r^2+\epsilon)^{-\frac{mp}{2}} = \overline u_{\rm in,\epsilon}^p. 
\end{align*}
Proof of lemma  is complete.
\end{proof}

\begin{lemma}\label{LemmaOuterSuperSolution}
Let $p>\pc$. For all $k\in(l,k_0)$, there exists $c>0$ such that the  following functions  $\overline u_{\rm out}, \overline v_{\rm out}, \overline w_{\rm out}$ given by
\begin{equation}\label{OuterSupSolCase1}
\left\{
\begin{split}
\overline u_{\rm out}(r) &= Lr^{-m}-br^{-l} +cr^{-k}, \\
\overline v_{\rm out}(r) &= m(n-2-m)Lr^{-m-2}- l(n-2-l)br^{-l-2}\\
& \qquad +k(n-2-k)cr^{-k-2},\\
\overline w_{\rm out}(r) &= m(m+2)(n-2-m)(n-4-m)Lr^{-m-4} \\
&\qquad - l(l+2)(n-2-l)(n-4-l)br^{-l-4} \\
&\qquad+ k(k+2)(n-2-k)(n-4-k)c r^{-k-4}
\end{split}
\right.
\end{equation}
satisfy the following estimates
\begin{equation}\label{OuterSuperSolution}
\begin{cases}
-\Delta \overline u_{\rm out} =  \overline v_{out} \quad& \text{for all} \quad r>0, \\
-\Delta \overline v_{\rm out} = \overline w_{out} \quad& \text{for all} \quad r>0, \\
-\Delta \overline w_{\rm out} \geqslant \overline u_{out}^p \quad& \text{for all} \quad r>\overline r_1,
\end{cases}
\end{equation}
where 
\[
\overline r_1 := \left( \frac{c}{b}\right)^{\frac{1}{k-l}}.
\]
\end{lemma}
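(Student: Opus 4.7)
The first two identities in \eqref{OuterSuperSolution} are purely computational and I would dispatch them in a single line: applying the formula $-\Delta r^{-s}=s(n-2-s)\,r^{-s-2}$ termwise produces $\overline v_{\rm out}$ from $\overline u_{\rm out}$, and applying it again produces $\overline w_{\rm out}$ from $\overline v_{\rm out}$.

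For the third inequality, I would first apply $-\Delta$ once more to $\overline w_{\rm out}$ and simplify using the two key identities $L^{p-1}=m(m+2)(m+4)(n-2-m)(n-4-m)(n-6-m)$ (because $u_{\infty}$ is a singular solution) and $l(l+2)(l+4)(n-2-l)(n-4-l)(n-6-l)=pL^{p-1}$ (because $l=m+\lambda_{3}$ satisfies \eqref{CharacteristicsEq}). These collapse the first two coefficients and yield
\[
-\Delta\overline w_{\rm out}=L^{p}\,r^{-m-6}-pL^{p-1}\,b\,r^{-l-6}+Q_{k}\,c\,r^{-k-6},
\]
where $Q_{k}:=k(k+2)(k+4)(n-2-k)(n-4-k)(n-6-k)$. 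To bound $\overline u_{\rm out}^{p}$ from above, I factor out $L^{p}r^{-m-6}$ (using $mp=m+6$) and write $\overline u_{\rm out}^{p}=L^{p}r^{-m-6}(1-z)^{p}$ with $z:=(b/L)r^{m-l}-(c/L)r^{m-k}$. For $r>\overline r_{1}$ one has $c\,r^{-k}<b\,r^{-l}$, so $z>0$; enlarging $c$ enlarges $\overline r_{1}$ and so ensures $z\le 1$ on $(\overline r_{1},\infty)$. Lemma \ref{LemmaBinomialIne} then gives $(1-z)^{p}\le 1-pz+C_{p}z^{2}$, and after expansion and cancellation I would obtain
\[
-\Delta\overline w_{\rm out}-\overline u_{\rm out}^{p}\ge (Q_{k}-pL^{p-1})\,c\,r^{-k-6}-C_{p}L^{p}\,r^{-m-6}\,z^{2}.
\]

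The heart of the argument is the sign of $Q_{k}-pL^{p-1}$ and the absorption of the error. Since $k\in(l,k_{0})\subset(l,m+\lambda_{4})$ lies strictly between two consecutive simple roots of the degree-six polynomial with negative leading coefficient identified in part (b) of the preceding proposition, one has $Q_{k}-pL^{p-1}>0$ throughout this interval, so the leading term on the right is strictly positive. The error term is controlled by $z^{2}\le (b/L)^{2}r^{2(m-l)}$, giving a contribution of order $r^{m-2l-6}$. Because $k_{0}\le 2l-m$ forces $k+m-2l<0$, the ratio $r^{m-2l-6}/r^{-k-6}=r^{k+m-2l}$ is decreasing in $r$, so it suffices to verify the resulting algebraic inequality at the single point $r=\overline r_{1}=(c/b)^{1/(k-l)}$.

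The main obstacle I anticipate is calibrating $c$. Substituting $\overline r_{1}=(c/b)^{1/(k-l)}$, the required inequality takes the form $c^{1+\beta}\ge C\,b^{\beta+2}$ with positive exponent $\beta:=(2l-m-k)/(k-l)>0$ and a constant $C$ depending on $p,n,k$ but not on $c$. Since $1+\beta>0$, this is achieved by taking $c$ sufficiently large; the same choice simultaneously ensures $\overline r_{1}$ is large enough for $z\le 1$ to hold on $(\overline r_{1},\infty)$, justifying the earlier use of the binomial inequality. Collecting these, the lemma follows.
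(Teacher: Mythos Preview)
Your proposal is correct and follows essentially the same route as the paper: direct computation for the first two identities, the substitution $z=(b/L)r^{m-l}-(c/L)r^{m-k}$ together with Lemma~\ref{LemmaBinomialIne} for the third, and the observation that $P(k)=Q_k-pL^{p-1}>0$ on $(l,m+\lambda_4)$ so the remaining error $C_pL^{p-2}b^{2}r^{k+m-2l}$ (decreasing since $k<k_0\le 2l-m$) can be absorbed by choosing $c$ large. Your explicit rewriting of the final condition as $c^{1+\beta}\ge C\,b^{2+\beta}$ with $\beta=(2l-m-k)/(k-l)>0$ is in fact a bit cleaner than the paper, which merely states the implicit requirement $cP(k)\ge C_pL^{p-2}b^{2}\overline r_1^{\,k+m-2l}$ without unpacking its dependence on $c$.
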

\begin{proof}
Let $k \in (l,k_0)$ be given, clearly
\[
P(k) = k(k+2)(k+4)(n-2-k)(n-4-k)(n-6-k) -pL^{p-1} >0.
\]
To fulfill \eqref{OuterSuperSolution}, the constant $c$ is required to satisfy 
\[
cP(k) \geq C_p L^{p-2}b^2 \overline r_1^{k+m-2l} \quad \text{and} \quad \left( \frac{c}{b}\right)^{\frac{1}{k-l}}\geq \left( \frac{b}{L}\right)^{\frac{1}{l-m}},
\]
where $C_p$ is the constant appearing in Lemma \ref{LemmaBinomialIne}. Indeed, going back to \eqref{UnderlineR0} to get that $\overline r_1\geq \underline r_1$ and that
\[
\overline u_{\rm out}(r) > Lr^{-m}-br^{-l}\geqslant 0
\]
for all $r\geq \overline r_1$. The direct computation shows the first and second equalities of \eqref{OuterSuperSolution}, while 
\begin{align*}
-\Delta \overline w_{\rm out} - \overline u_{\rm out}^p &= m(m+2)(m+4)(n-2-m)(n-4-m)(n-6-m)Lr^{-m-6} \\
&\qquad - l(l+2)(l+4)(n-2-l)(n-4-l)(n-6-l)br^{-l-6} \\
&\qquad + k(k+2)(k+4)(n-2-k)(n-4-k)(n-6-k)br^{-k-6} \\
&\qquad -(Lr^{-m}-br^{-l}+cr^{-k})^p.
\end{align*}
From the  definition of $\overline r_1$, we notice that
\[
0< z:= \frac{b}{L}r^{m-l}-\frac{c}{L}r^{m-k} \leqslant 1
\]
 for $r\geqslant \overline r_1$. Hence, thanks to Lemma \ref{LemmaBinomialIne}, we give the estimate
\begin{align*}
& (Lr^{-m}-br^{-l}+cr^{-k})^p= L^p r^{-mp} \left(  1-\left(  \frac{b}{L}r^{m-l}-\frac{c}{L}r^{m-k} \right)   \right)^p \\
&\qquad \leqslant L^p r^{-mp} \left(  1- p \left(  \frac{b}{L}r^{m-l}-\frac{c}{L}r^{m-k} \right)    + C_p \left(  \frac{b}{L}r^{m-l}-\frac{c}{L}r^{m-k} \right)^2 \right) \\
&\qquad \leqslant L^p r^{-mp}  -pL^{p-1}br^{-l-6} + pL^{p-1}cr^{-k-6} +C_p L^{p-2}b^2 r^{-6+m-2l}
\end{align*}
for $r\geqslant \overline r_1$. Hence,
\begin{align*}
-\Delta \overline w_{\rm out} - \overline u_{\rm out}^p  &\geq r^{-k-6}\left( c P(k) - C_p L^{p-2}b^2 r^{k+m-2l}\right) \\
&\geqslant r^{-k-6}\left( c P(k) - C_p  L^{p-2}b^2 \overline r_1^{k+m-2l} \right) \geqslant 0
\end{align*}
for $r\geqslant \overline r_1$. 
\end{proof}
Our next task is to give the borderline between inner and outer super-solutions introduced in two preceding lemmas. 
\begin{lemma}\label{LemmaBorderline}
Let 
$k\in(l,k_0)$ be given. Then, there exists $\epsilon_0>0$ such that  for any $\epsilon  \in(0,\epsilon_0)$, the following quantities
\begin{equation}\label{BorderlineR1}
\overline r_1(\epsilon) := \sup\{r>0 : \overline u_{\rm in,\epsilon} < \overline u_{\rm out} \text{ in } (0,r)\} \in (0,+\infty],
\end{equation}
\begin{equation}\label{BorderlineR2}
\overline r_2(\epsilon) := \sup\{r>0 : \overline v_{\rm in,\epsilon} < \overline v_{\rm out} \text{ in } (0,r)\} \in (0,+\infty],
\end{equation}
and
\begin{equation}\label{BorderlineR3}
\overline r_3(\epsilon) := \sup\{r>0 : \overline w_{\rm in,\epsilon} < \overline w_{\rm out} \text{ in } (0,r)\} \in (0,+\infty],
\end{equation}
fulfill the following estimates
\begin{equation}\label{Ordering}
\overline r_1 < \overline r_1(\epsilon) < \overline r_2 <\overline r_2(\epsilon) <\overline r_3 <\overline r_3(\epsilon) < \overline r_3 +1.
\end{equation}
Here,    $\overline u_{\rm in,\epsilon}, \overline v_{\rm in,\epsilon}, \overline w_{\rm in,\epsilon}, \overline u_{\rm out}, \overline v_{\rm out}, \overline w_{\rm out}$ are given  in \eqref{InnerSuperSolution} and \eqref{OuterSupSolCase1} and
\[
\overline r_1 := \left( \frac{c}{b}\right)^{\frac{1}{k-l}}, \quad \overline r_2 :=\left( \frac{k(n-2-k)c}{l(n-2-l)b}\right)^{\frac{1}{k-l}},
\]
and
\[
\overline r_3 :=\left( \frac{k(k+2)(n-2-k)(n-4-k)c}{l(l+2)(n-2-l)(n-4-l)b}\right)^{\frac{1}{k-l}}.
\]
\end{lemma}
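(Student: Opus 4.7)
The plan rests on two elementary observations. First, for every $\epsilon > 0$ and $r > 0$, the three inner functions in \eqref{InnerSuperSolution} sit strictly below their formal $\epsilon = 0$ counterparts $L r^{-m}$, $m(n-2-m) L r^{-m-2}$, and $m(m+2)(n-2-m)(n-4-m) L r^{-m-4}$. Second, the three outer correction brackets
\[
-b r^{-l} + c r^{-k}, \quad -l(n-2-l) b r^{-l-2} + k(n-2-k) c r^{-k-2}, \quad -l(l+2)(n-2-l)(n-4-l) b r^{-l-4} + k(k+2)(n-2-k)(n-4-k) c r^{-k-4}
\]
are, by the very definition of $\overline r_1,\overline r_2,\overline r_3$, nonnegative on $(0,\overline r_1]$, $(0,\overline r_2]$, $(0,\overline r_3]$ respectively, vanish at the right endpoint, and turn strictly negative just beyond. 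Combining these two facts will yield both the lower and upper bounds on each $\overline r_i(\epsilon)$; the nonemptiness of the three defining sets (ensuring the suprema lie in $(0,+\infty]$) follows from the observation that as $r \to 0^+$ the outer functions blow up like $r^{-k}$, $r^{-k-2}$, $r^{-k-4}$, while the inner ones stay bounded.

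Before combining them I would verify the baseline ordering $\overline r_1 < \overline r_2 < \overline r_3$ by writing
\[
\frac{\overline r_2}{\overline r_1} = \left(\frac{k(n-2-k)}{l(n-2-l)}\right)^{\frac{1}{k-l}}, \qquad \frac{\overline r_3}{\overline r_2} = \left(\frac{(k+2)(n-4-k)}{(l+2)(n-4-l)}\right)^{\frac{1}{k-l}},
\]
and checking that both quotients exceed $1$. This is the unique place where the hypothesis $k < k_0$ really enters: by Vieta applied to the quadratic $l^2 - (n-6)l + x_3 = 0$ in the proof of Proposition 2.1(b), one has $l_3 + l_4 = n - 6$, hence $m + \lambda_4 = n - 6 - l$, and therefore $k < n - 6 - l$; expanding $(l+\delta)(n-2-l-\delta) - l(n-2-l) = \delta(n-2-2l-\delta)$ with $\delta = k - l > 0$ then gives both required inequalities.

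With this in hand the lower bounds are immediate. On $(0,\overline r_1]$ the two observations chain into
\[
\overline u_{\rm in,\epsilon}(r) < L r^{-m} \leqslant \overline u_{\rm out}(r),
\]
strict throughout; continuity extends strictness to a right neighborhood of $\overline r_1$, yielding $\overline r_1(\epsilon) > \overline r_1$. The same template applied with the $v$- and $w$-brackets above gives $\overline r_2(\epsilon) > \overline r_2$ and $\overline r_3(\epsilon) > \overline r_3$. For the upper bounds I would fix the target endpoint and let $\epsilon \to 0^+$: since $\overline r_2 > \overline r_1$, the first bracket evaluated at $\overline r_2$ is strictly negative, so $\overline u_{\rm out}(\overline r_2) < L \overline r_2^{-m}$ by an $\epsilon$-independent margin, while $\overline u_{\rm in,\epsilon}(\overline r_2) = L(\overline r_2^{\,2} + \epsilon)^{-m/2} \to L \overline r_2^{-m}$; hence for sufficiently small $\epsilon$ one has $\overline u_{\rm in,\epsilon}(\overline r_2) > \overline u_{\rm out}(\overline r_2)$, and continuity produces some $r < \overline r_2$ where the strict inequality persists, forcing $\overline r_1(\epsilon) < \overline r_2$. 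The bounds $\overline r_2(\epsilon) < \overline r_3$ and $\overline r_3(\epsilon) < \overline r_3 + 1$ are proved identically, evaluating at $r = \overline r_3$ and at $r = \overline r_3 + 1$ respectively; the last case uses only that the third bracket remains strictly negative one unit past $\overline r_3$, since beyond $\overline r_3$ the ratio of its positive $c r^{-k-4}$ term to its negative $b r^{-l-4}$ term is monotonically decreasing in $r$. Taking $\epsilon_0$ to be the minimum of the three thresholds produced closes the argument.

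The main obstacle is combinatorial rather than analytic: one must verify that the single hypothesis $k < k_0$ is sharp enough to force \emph{both} $k(n-2-k) > l(n-2-l)$ \emph{and} $(k+2)(n-4-k) > (l+2)(n-4-l)$. This reduces to the Vieta identity $m + \lambda_4 = n - 6 - l$, which is not stated explicitly in the preceding text but is immediate from the proof of Proposition 2.1(b). Once that is in place, every remaining step is a short $\epsilon \to 0$ comparison repeated at the three polynomial levels corresponding to $u$, $v$, $w$.
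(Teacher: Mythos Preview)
Your argument is correct and follows essentially the same $\epsilon\to 0$ comparison strategy as the paper, which packages the same idea via the gap functions $\psi_{i,\epsilon}=\overline u_{\rm in,\epsilon}-\overline u_{\rm out}$ and their monotone limits $\psi_i$. You are in fact more careful than the paper in one respect: the paper uses the chain $\overline r_1<\overline r_2<\overline r_3$ when choosing the gaps $\eta$ but never verifies it, whereas you derive it from the Vieta identity $l_3+l_4=n-6$ (hence $k<k_0\leqslant n-6-l$), which is exactly the right justification.
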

\begin{proof}
To overcome this lemma, let us present the gap functions between inner and outer parts of the super-solutions $(\overline u_{\epsilon}, \overline v_{\epsilon}, \overline w_{\epsilon})$
\begin{align*}
\psi_{1,\epsilon}(r) := \overline u_{\rm in,\epsilon}(r) - \overline u_{\rm out}(r),\\
\psi_{2,\epsilon}(r) := \overline v_{\rm in,\epsilon}(r) - \overline v_{\rm out}(r),
\end{align*}
and 
\[
\psi_{3,\epsilon}(r) := \overline w_{\rm in,\epsilon}(r) - \overline w_{\rm out}(r).
\]
In the next stage, we study the pointwise convergence of $\psi_{1,\epsilon},\psi_{2,\epsilon}$ and $\psi_{3,\epsilon}$ as  $\epsilon \searrow 0$. It is not hard to see that 
\begin{align*}\label{ConvergencePsiEpsilon}
&\psi_{1,\epsilon}(r) \nearrow \psi_1(r) :=  br^{-l-4}- cr^{-k-4},\\
&\psi_{2,\epsilon}(r) \nearrow \psi_2(r) :=  l(n-2-l)br^{-l-4}- k(n-2-k)cr^{-k-4},\\
&\psi_{3,\epsilon}(r) \nearrow \psi_3(r) := l(l+2)(n-2-l)(n-4-l) br^{-l-4}\\
&\qquad \qquad- k(k+2)(n-2-k)(n-4-k)cr^{-k-4} 
\end{align*}
for all $r>0$ in $C_{loc}^1((R,\infty))$ sense. In addition, we also notice the sign of limiting functions,
\begin{equation}\label{Psi1}
 \psi_1
\begin{cases}
<0 \quad& \text{for all } r<\overline r_1,\\
=0 \quad& \text{for } r=\overline r_1,\\
>0 \quad& \text{for all } r>\overline r_1.
\end{cases}
\end{equation}
\begin{equation}\label{Psi2} 
\psi_2
\begin{cases}
<0 \quad& \text{for all } r<\overline r_2,\\
=0 \quad& \text{for } r=\overline r_2,\\
>0 \quad& \text{for all } r>\overline r_2,
\end{cases}
\end{equation}
and 
\begin{equation}\label{Psi3} 
\psi_3
\begin{cases}
<0 \quad& \text{for all } r<\overline r_3,\\
=0 \quad& \text{for } r=\overline r_3,\\
>0 \quad& \text{for all } r>\overline r_3.
\end{cases}
\end{equation}
Combining  with  the monotone convergence of $\psi_{i,\epsilon} (i=1,2,3)$, we obtain that each function has exactly one zero point $\overline r_i(\epsilon) > \overline r_i$
 for all $\epsilon >0$ . Hence, for all $\eta >0$,  there exists $\epsilon_i(\eta)>0$ such that 
\[
\psi_{i,\epsilon}(\overline r_i +\eta) >0
\]
 when $\epsilon \in (0,\epsilon_i(\eta))$. Therefore, 
\[
\overline r_i < \overline r_i(\epsilon) < \overline r_i +\eta 
\] 
for all  $\epsilon \in (0,\epsilon_i(\eta))$. For $i=1,2,3$, one can choose $\eta$ is equal to $(\overline r_2 -\overline r_1)/2$, $(\overline r_3 -\overline r_2)/2$ and $1$,  respectively to complete the order \eqref{Ordering}. 
\end{proof}
Now we are in position to construct a super-solution $(\overline u_{\epsilon}, \overline v_{\epsilon}, \overline w_{\epsilon})$ of \eqref{LaneEmdenSystem}. 
\begin{lemma}\label{LemmaSuperSolution}
Let $p>\pc$ and $ b>0$,  then there exist $k>l,c>0$ and $\epsilon_0 >0$ such that whenever $\epsilon \in (0,\epsilon_0)$, the functions
\begin{equation} 
\overline u_{\epsilon}(x) := 
\begin{cases}
\overline u_{\rm in,\epsilon}(|x|), \quad & 0\leqslant |x|<\overline r_1(\epsilon),\\
\overline u_{\rm out}(|x|), \quad & |x|\geqslant \overline r_1(\epsilon),
\end{cases}
\end{equation}
\begin{equation} 
\overline v_{\epsilon}(x) := 
\begin{cases}
\overline v_{\rm in,\epsilon}(|x|), \quad & 0\leqslant |x|<\overline r_2(\epsilon),\\
\overline v_{\rm out}(|x|), \quad & |x|\geqslant \overline r_2(\epsilon). 
\end{cases}
\end{equation}
\begin{equation} 
\overline w_{\epsilon}(x) := 
\begin{cases}
\overline w_{\rm in,\epsilon}(|x|), \quad & 0\leqslant |x|<\overline r_3(\epsilon),\\
\overline w_{\rm out}(|x|), \quad & |x|\geqslant \overline r_3(\epsilon), 
\end{cases}
\end{equation}
are nonnegative, Lipschitz continuous on $\R^n$ and satisfy 
\begin{equation}\label{SuperSolutionCase1}
\begin{cases}
-\Delta \overline u_{\epsilon} \geqslant \overline v_{\epsilon},  \\
-\Delta \overline v_{\epsilon} \geqslant \overline w_{\epsilon}, \\
-\Delta \overline w_{\epsilon} \geqslant \overline u_{\epsilon}^p,
\end{cases}
\end{equation}
in the distributional sense on $\R^n$. Here, $ \overline r_1(\epsilon), \overline r_2(\epsilon), \overline r_3(\epsilon)$ are given in \eqref{BorderlineR1}, \eqref{BorderlineR2} and \eqref{BorderlineR3} and the functions $\overline u_{\rm in,\epsilon}, \overline v_{\rm in,\epsilon}, \overline w_{\rm in,\epsilon}, \overline u_{\rm out}, \overline v_{\rm out}, \overline w_{\rm out}$ are defined through \eqref{InnerSuperSolution} and \eqref{OuterSupSolCase1}.
\end{lemma}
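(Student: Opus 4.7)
The plan is to fix $k\in(l,k_0)$ and $c>0$ as supplied by Lemma \ref{LemmaOuterSuperSolution}, choose $\epsilon_0>0$ as in Lemma \ref{LemmaBorderline}, and then verify the required properties of the glued triple $(\overline u_\epsilon,\overline v_\epsilon,\overline w_\epsilon)$ for every $\epsilon\in(0,\epsilon_0)$. Continuity across each matching sphere $\{|x|=\overline r_i(\epsilon)\}$ is automatic because the gap function $\psi_{i,\epsilon}$ vanishes there by the definition of $\overline r_i(\epsilon)$. Since each piece is smooth in $r$ with uniformly bounded gradient on its domain --- the inner parts are bounded because $\epsilon>0$ keeps them away from the origin singularity, while the outer parts decay polynomially --- the spliced functions are nonnegative and globally Lipschitz on $\R^n$. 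The nonnegativity of $\overline u_{\rm out},\overline v_{\rm out},\overline w_{\rm out}$ on their respective outer regions $\{|x|\geqslant \overline r_i(\epsilon)\}$ follows from Lemma \ref{LemmaOuterSuperSolution} combined with the ordering \eqref{Ordering}, which places us strictly past the radii where the outer pieces could change sign.

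Next I would split $\R^n$ into the four open radial shells cut out by $\overline r_1(\epsilon)<\overline r_2(\epsilon)<\overline r_3(\epsilon)$ and check \eqref{SuperSolutionCase1} classically on each. On the innermost shell all three components are inner, so the inequalities come directly from Lemma \ref{LemmaInnerSuperSolution}; on the outermost shell all three are outer and the conclusion reduces to Lemma \ref{LemmaOuterSuperSolution}, where the required condition $r>\overline r_1$ is guaranteed by \eqref{Ordering}. On each intermediate shell the cross-family inequalities reduce to pointwise comparisons between inner and outer components that are built into the definition of the matching radii. For instance on $(\overline r_1(\epsilon),\overline r_2(\epsilon))$ the first inequality $-\Delta \overline u_{\rm out}\geqslant \overline v_{\rm in,\epsilon}$ is just $\overline v_{\rm out}\geqslant \overline v_{\rm in,\epsilon}$, which is exactly the inequality $\psi_{2,\epsilon}<0$ valid for $r<\overline r_2(\epsilon)$; and the third inequality $-\Delta \overline w_{\rm in,\epsilon}\geqslant \overline u_{\rm out}^p$ follows from $-\Delta \overline w_{\rm in,\epsilon}\geqslant \overline u_{\rm in,\epsilon}^p$ together with $\overline u_{\rm in,\epsilon}\geqslant \overline u_{\rm out}\geqslant 0$ (i.e.\ $\psi_{1,\epsilon}\geqslant 0$ past $\overline r_1(\epsilon)$). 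The middle shell $(\overline r_2(\epsilon),\overline r_3(\epsilon))$ is handled symmetrically, with $\psi_{3,\epsilon}$ playing the role that $\psi_{2,\epsilon}$ played before.

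The genuinely distributional step --- and the one I expect to be the main obstacle --- is handling the jumps in the radial derivatives across the three matching spheres. A standard computation with Green's identity applied separately to $\{|x|<r_0\}$ and $\{|x|>r_0\}$ shows that for a radial continuous piecewise-smooth function equal to $f_{\rm in}$ inside and $f_{\rm out}$ outside a sphere of radius $r_0$ with $f_{\rm in}(r_0)=f_{\rm out}(r_0)$, the distributional $-\Delta$ equals the a.e.\ classical $-\Delta$ plus a surface-measure contribution on $\{|x|=r_0\}$ of density $f_{\rm in}'(r_0)-f_{\rm out}'(r_0)$. The distributional super-solution inequalities in \eqref{SuperSolutionCase1} therefore require each such density to be nonnegative. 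Since $\psi_{i,\epsilon}(r)<0$ on $(0,\overline r_i(\epsilon))$ while $\psi_{i,\epsilon}(\overline r_i(\epsilon))=0$, we necessarily have $\psi_{i,\epsilon}'(\overline r_i(\epsilon))\geqslant 0$, which is precisely the sign we need at each of the three interfaces. Testing each of the three inequalities against an arbitrary nonnegative $\varphi\in C_c^\infty(\R^n)$ and adding the classical shell-wise contributions to the nonnegative surface contributions then yields \eqref{SuperSolutionCase1} in the distributional sense, completing the proof.
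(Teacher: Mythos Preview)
Your proof is correct and follows essentially the same strategy as the paper: fix $k,c,\epsilon_0$ via Lemmas \ref{LemmaOuterSuperSolution} and \ref{LemmaBorderline}, then verify \eqref{SuperSolutionCase1} by the same four-shell case analysis, using the sign information on the gap functions $\psi_{i,\epsilon}$ to handle the mixed inner/outer inequalities on the intermediate shells. You are in fact more careful than the paper, which checks the inequalities only on the open shells and does not explicitly address the surface-measure contributions at the matching spheres; your observation that $\psi_{i,\epsilon}'(\overline r_i(\epsilon))\geqslant 0$ supplies precisely the jump condition needed for the distributional inequalities to hold across each interface.
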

\begin{proof}
 There is a need  to prove Lemma \ref{LemmaSuperSolution} in four sub-domains $0<|x|<\overline r_1(\epsilon), \overline r_1(\epsilon)<|x|<\overline r_2(\epsilon), \overline r_2(\epsilon)<|x|<\overline r_3(\epsilon)$ and $\overline r_3(\epsilon)<|x|$. The case $0<|x|<\overline r_1(\epsilon)$ and  $\overline r_3(\epsilon)<|x|$ are directly followed by Lemma  \ref{LemmaInnerSuperSolution} and \ref{LemmaOuterSuperSolution}. Now, we deal with the two remaining cases. 

When $\overline r_1(\epsilon)<|x|<\overline r_2(\epsilon)$, there holds 
\[ 
\overline u_{\epsilon}(x) = \overline u_{\rm out}(|x|), \quad \overline v_{\epsilon}(x) = \overline v_{\rm in,\epsilon}(|x|), \quad w_{\epsilon}(x) = \overline w_{\rm in,\epsilon}(|x|).
\]
Hence, using Lemma \ref{LemmaOuterSuperSolution} again, the first inequality of \eqref{SuperSolutionCase1} can be obtained, since
\[
-\Delta \overline u_{\epsilon} = -\Delta \overline u_{\rm out} \geqslant \overline v_{\rm out} \geq \overline u_{\rm in,\epsilon} = \overline u_{\epsilon}.
\] 
The second and third one are consequences of Lemma \ref{LemmaInnerSuperSolution}. Indeed,
\[
-\Delta \overline v_{\epsilon} = -\Delta \overline v_{\rm in,\epsilon} \geqslant \overline w_{\rm in,\epsilon} = \overline w_{\epsilon}
\] 
and 
\[
-\Delta \overline w_{\epsilon} = -\Delta \overline w_{\rm in,\epsilon} \geqslant \overline u_{\rm in,\epsilon}^p  \geqslant \overline u_{\rm out}^p = \overline u_{\epsilon}^p,
\] 
here we have used the definition of $\overline r_1(\epsilon)$ and $\overline r_2(\epsilon)$. From this, we  can conclude \eqref{SuperSolutionCase1} in the region $\overline r_1(\epsilon)<|x|<\overline r_2(\epsilon)$. 

In the region $\overline r_2(\epsilon)<|x|<\overline r_3(\epsilon)$, we can argue similarly keeping in mind that $\overline v_{\epsilon}(x) = \overline v_{\rm out}(|x|)$ in this scenario. Thus, proof of Lemma \ref{LemmaSuperSolution} is complete.
\end{proof}

\section{The critical case $p=\pc$}
This section is devoted to construction of sub- and super-solutions to \eqref{LaneEmdenSystem} in the critical case $p=\pc$. As shown in the statement of Theorem \ref{MainThmCriticalCase}, the asymptotic behavior of solutions of \eqref{triharmonic} involves a logarithmic correction leading to more complicated proof of the borderline between sub- and super-solutions. As always, we divide our construction into two subsections.

\subsection{Construction of a sub-solution to \eqref{LaneEmdenSystem}}
Following the way of construction of the sub-solution in the supercritical case presented in subsection \ref{subsec-supercritical-sub}, we define the following functions
\begin{equation}\label{OuterSubSolCase2}
\left\{
\begin{split}
\underline u_{\rm out}(r) &:= Lr^{-m} - br^{-l} \log \frac{r}{R},\\
\underline v_{\rm out}(r) &:= m(n-2-m)Lr^{-m-2} +
\left[ \begin{split}
- l(n-2-l) \log \frac{r}{R} + (n-2-2l)
\end{split} \right]br^{-l-2},\\
\underline w_{\rm out}(r) &:= m(m+2)(n-2-m)(n-4-m)Lr^{-m-4}\\
&\qquad \qquad - l(l+2)(n-2-l)(n-4-l)br^{-l-4} \log \frac{r}{R} \\
&\qquad \qquad + (n-2-2l)(l+2)(n-4-l) b r^{-l-4},
\end{split}
\right.
\end{equation}
where $R$ is positive constant to be chosen later. Next, we study the sign of $\underline u_{\rm out}, \underline v_{\rm out}$ and  $\underline w_{\rm out}$.
\begin{lemma}\label{LemmaComparisionUout}
Let $\underline r_1, \underline r_2, \underline r_3$ be defined as follows
 \begin{equation}\label{UnderlineRcase2}
\begin{cases}
\underline r_1 := \sup\{ r>0 : \underline u_{\rm out}(r) \leqslant 0 \},\\
\underline r_2 := \sup\{ r>0 : \underline v_{\rm out}(r) \leqslant 0 \},\\
\underline r_3 := \sup\{ r>0 : \underline w_{\rm out}(r) \leqslant 0 \}.
\end{cases}
\end{equation}
There exists large enough $R>0$ such that the numbers $\underline r_1, \underline r_2, \underline r_3$ are well-defined and ordered as follows
\begin{equation}\label{OrderRunder}
R<\underline r_1 < \underline r_2 < \underline r_3 <+\infty.
\end{equation}
\end{lemma}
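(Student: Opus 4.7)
My plan is to reduce each of the three sign conditions to a one-variable inequality comparing a power and a logarithm. Multiplying $\underline u_{\rm out}$, $\underline v_{\rm out}$, $\underline w_{\rm out}$ by the positive factors $r^l$, $r^{l+2}$, $r^{l+4}$ respectively turns $\underline u_{\rm out}\le 0$, $\underline v_{\rm out}\le 0$, $\underline w_{\rm out}\le 0$ into $h_i(r)\le 0$ for $i=1,2,3$, where
\[
h_i(r) = \alpha_i r^{l-m} + \beta_i - \gamma_i \log(r/R),
\]
with $(\alpha_1,\beta_1,\gamma_1)=(L,0,b)$, $(\alpha_2,\beta_2,\gamma_2)=(m(n-2-m)L,\,(n-2-2l)b,\,l(n-2-l)b)$ and $(\alpha_3,\beta_3,\gamma_3)=(m(m+2)(n-2-m)(n-4-m)L,\,(n-2-2l)(l+2)(n-4-l)b,\,l(l+2)(n-2-l)(n-4-l)b)$. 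In the critical case $l=(n-6)/2$ one has $n-2-2l=4>0$, so all the constants $\alpha_i,\beta_i,\gamma_i$ are strictly positive.

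Next I would analyse each $h_i$ on $(0,\infty)$: it tends to $+\infty$ both as $r\to 0^+$ and as $r\to\infty$, is strictly positive at $r=R$, and has a unique interior minimum at $r_i^\ast=(\gamma_i/(\alpha_i(l-m)))^{1/(l-m)}$, with
\[
h_i(r_i^\ast) = \beta_i + \gamma_i\left(\frac{1}{l-m}-\log(r_i^\ast/R)\right).
\]
By choosing $R$ appropriately (in effect, sufficiently small relative to $b,L,l,m,n$) one arranges $h_i(r_i^\ast)<0$ for each $i=1,2,3$ simultaneously, so that $h_i$ has exactly two zeros in $(R,\infty)$ and $\underline r_i$ coincides with the larger of them. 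In particular $R<\underline r_i<\infty$.

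The ordering then follows from two explicit substitutions. Using the identity $L\underline r_1^{l-m}=b\log(\underline r_1/R)$, i.e.\ $h_1(\underline r_1)=0$, one rewrites
\[
h_2(\underline r_1) = -L\underline r_1^{l-m}(l-m)(n-2-l-m) + (n-2-2l)b.
\]
The lower bound $L\underline r_1^{l-m}>b/(l-m)$ coming from $\underline r_1>r_1^\ast$, combined with $n-2-l-m>n-2-2l$ (which is just $l>m$), gives $h_2(\underline r_1)<0$, hence $\underline r_2>\underline r_1$. Analogously, plugging $h_2(\underline r_2)=0$ into $h_3(\underline r_2)$ produces the key cancellation
\[
h_3(\underline r_2) = m(n-2-m)L\underline r_2^{l-m}\bigl[(m+2)(n-4-m)-(l+2)(n-4-l)\bigr],
\]
and in the critical case the algebraic identity $(l+2)(n-4-l)-(m+2)(n-4-m)=\bigl((n-6)/2-m\bigr)^2>0$ shows the bracket is strictly negative, so $\underline r_3>\underline r_2$.

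The main obstacle I foresee is controlling the constant corrections $\beta_2,\beta_3$ created by the logarithm, which were absent in the supercritical construction of Subsection \ref{subsec-supercritical-sub} and could in principle spoil the ordering. Both substitutions above demonstrate that they do not: the $\beta$-parts combine with the $\gamma\log$-parts so that everything reduces to the elementary inequalities $(l-m)(n-2-l-m)>0$ and $(l+2)(n-4-l)>(m+2)(n-4-m)$, both immediate from $l>m$ and the critical relation $l=(n-6)/2$. Combined with $h_i(r)\to+\infty$ as $r\to\infty$, this yields the full chain $R<\underline r_1<\underline r_2<\underline r_3<\infty$, and the proof runs structurally parallel to the supercritical case.
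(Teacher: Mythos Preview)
Your proposal is correct and follows essentially the same approach as the paper: both multiply $\underline u_{\rm out},\underline v_{\rm out},\underline w_{\rm out}$ by $r^{l},r^{l+2},r^{l+4}$ to obtain functions of the form $\alpha r^{l-m}-\gamma\log(r/R)+\beta$, choose $R$ so that each has two zeros with the larger one being $\underline r_i$, and establish the ordering by evaluating $h_{i+1}$ at $\underline r_i$ via the substitution $h_i(\underline r_i)=0$. The only cosmetic difference is that the paper packages the choice of $R$ through an auxiliary parameter $\mu$ (setting $R=e^{-\mu}(\mu b/L)^{1/(l-m)}$ with $\mu$ large, which actually makes $R$ small, in agreement with your remark), whereas you argue directly via the unique minimum $r_i^\ast$; your computation of $h_3(\underline r_2)=m(n-2-m)L\underline r_2^{l-m}\bigl[(m+2)(n-4-m)-(l+2)(n-4-l)\bigr]$ is in fact slightly tidier than the paper's, since it makes the sign immediate without the extra condition $\mu>(n-2-2l)/(l(n-2-l))$ the paper imposes.
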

\begin{proof}
Let real number $\mu$ be given such that 
\begin{align*}
\mu &> \frac{1}{l-m},\\
\mu &> \frac{n-2-2l}{(n-2-l-m)(l-m)},\\
\mu &> \frac{l(n-2-l)}{m(n-2-m)(l-m)},\\
\mu &> \frac{n-2-2l}{l(n-2-l)},
\end{align*}
and that 
\[
\mu > \frac{l(l+2)(n-2-l)(n-4-l)}{m(m+2)(n-2-m)(n-4-m)(l-m)}.
\]
Let $R,r_1$ be chosen as follows
\[
R:= e^{-\mu} \left( \frac{\mu b}{L} \right)^{\frac{1}{l-m}}, \quad r_1:= \left( \frac{\mu b}{L} \right)^{\frac{1}{l-m}}.
\]
Now,  in terms of the outer part of $(\underline u, \underline v, \underline w)$, we define
\[
\left\{
\begin{split}
\phi_1(r) &= r^l \underline u_{\rm out}(r) \\
&= Lr^{l-m} -b\log \frac{r}{R},\\
\phi_2(r) &= r^{l+2} \underline v_{\rm out}(r) \\
&= m(n-2-m)Lr^{l-m} - l(n-2-l)b\log \frac{r}{R} + (n-2-2l)b,\\
\phi_3(r) &= r^{l+4} \underline w_{\rm out}(r)\\
&= m(m+2)(n-2-m)(n-4-m)Lr^{l-m}\\
&\quad - l(l+2)(n-2-l)(n-4-l)b \log \frac{r}{R} + (n-2-2l)(l+2)(n-4-l) b .
\end{split}
\right.
\]
It is obvious that $\phi_1(r_1) =0$ and for $r > r_1$, there holds 
\begin{align*}
\frac{d}{dr} \phi_1(r) = \frac{L}{r} \Big[ (l-m) r^{l-m} - \frac{b}{L} \Big] > \frac{b}{r} \Big[ (l-m) \mu -  1\Big] >0,
\end{align*}
which tells us that $\phi_1$ only has a positive root $r_1$. Hence $\underline r_1 =r_1$. AFor the function $\phi_2$, at $r_1$, we calculate to see that
\begin{align*}
\phi_2(r_1) &= m(n-2-m)Lr_1^{l-m} - l(n-2-l)b\log \frac{r_1}{R} + (n-2-2l)b\\
&= m(n-2-m)b\log \frac{r_1}{R}- l(n-2-l)b\log \frac{r_1}{R} + (n-2-2l)b\\
&=\Big[ - (l-m)(n-2-l-m) \mu + (n-2-2l ) \Big] b <0,
\end{align*}
and that
\begin{align*}
\frac{d}{dr} \phi_2(r) &= \frac{L}{r} \Big[ (l-m) m(n-2-m) r^{l-m} - l(n-2-l)\frac{b}{L} \Big] \\
&> \frac{b}{r} \Big[  m(n-2-m) (l-m) \mu -  l(n-2-l) \Big] >0
\end{align*}
for all $ r >\underline  r_1$. Thus, there is only one zero point $r=\underline r_2 > \underline r_1$ of $\phi_2$. This fact   implies that 
\[
m(n-2-m)L \underline r_2^{l-m} = l(n-2-l)b\log \frac{\underline r_2}{R} - (n-2-2l)b.
\]
Making a substitution into $\phi_3(r_2)$ to get that
\begin{align*}
\phi_3(\underline r_2)  &= m(m+2)(n-2-m)(n-4-m)L \underline r_2^{l-m}\\
&\qquad  - l(l+2)(n-2-l)(n-4-l) \log \frac{\underline r_2}{R} + (n-2-2l)(l+2)(n-4-l) b \\
&= (m+2)(n-4-m) \left[ l(n-2-l) b \log \frac{\underline r_2}{R} -(n-2-2l)b \right]\\
&\qquad - l(l+2)(n-2-l)(n-4-l) \log \frac{\underline r_2}{R} + (n-2-2l)(l+2)(n-4-l)b \\
&= -(l-m)(n-2-m-l)b \left[  l(n-2-l)  \log \frac{\underline r_2}{R} -(n-2-2l) \right]\\
&<0,
\end{align*}
because
\[
  \log \frac{\underline r_2}{R} >  \log \frac{ \underline r_1}{R}   = \mu > \frac{n-2-2l}{l(n-2-l)} .
\]
Meanwhile, for $r\geq \underline r_2 (> \underline r_1)$,
\begin{align*}
\phi_3'(r) &= m(m+2)(n-2-m)(n-4-m)(l-m)Lr^{l-m-1} \\
&\qquad -  l(l+2)(n-2-l)(n-4-l) \frac{b}{r}\\
 &= \frac{m(m+2)(n-2-m)(n-4-m)(l-m)b}{r} \\
&\quad\quad\times \left( \frac{L}{b}r^{l-m} - \frac{l(l+2)(n-2-l)(n-4-l)}{m(m+2)(n-2-m)(n-4-m)(l-m)} \right)\\
&> \frac{m(m+2)(n-2-m)(n-4-m)(l-m)b}{r} \\
&\quad\quad\times \left( \mu -  \frac{l(l+2)(n-2-l)(n-4-l)}{m(m+2)(n-2-m)(n-4-m)(l-m)}  \right)\\
&>0.
\end{align*}
Combining these above facts to obtain that $\phi_3$ admits only one positive root $\underline r_3 >\underline  r_2$. That leads to the well definition of $\underline r_3$ and \eqref{OrderRunder}.
\end{proof}
The following result is similar to  Lemma \ref{LemmaPropertiesOfSubSol} hower for the critical case.
\begin{lemma}\label{LemmaPropOfSubSolCase2}
Let $\underline u_{\rm out}, \underline v_{\rm out}, \underline w_{\rm out}$ be defined as in \eqref{OuterSubSolCase2} and $\underline r_1$ be stated as in Lemma \ref{LemmaComparisionUout}, there holds
\begin{equation}\label{PropOfSubSolCase2}
\begin{cases}
-\Delta \underline u_{\rm out} = \underline v_{\rm out} &\quad \text{for all } r>0,\\
-\Delta \underline v_{\rm out} = \underline w_{\rm out} &\quad \text{for all }  r>0,\\
-\Delta \underline w_{\rm out} \leqslant \underline u_{\rm out}^p  &\quad \text{for all } r>\underline r_1.
\end{cases}
\end{equation}
\end{lemma}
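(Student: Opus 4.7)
The plan is to verify each of the three claims by direct computation in the radial variable, exploiting the fact that in the critical case $p=\pc$ the root $\lambda_3$ of \eqref{CharacteristicsEq} is double, which by Part (b) of the preceding Proposition pins down $l=m+\lambda_3=(n-6)/2$. This particular value of $l$ is what forces several otherwise-obstructing remainder terms to cancel.

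I would first record the two radial-Laplacian identities
\[
-\Delta r^{-k} = k(n-2-k)\, r^{-k-2}, \qquad -\Delta\bigl[r^{-k}\log(r/R)\bigr] = k(n-2-k)\, r^{-k-2}\log(r/R) - (n-2-2k)\, r^{-k-2},
\]
valid for every $k>0$. Applying them to $\underline u_{\rm out}$ with $k=m$ and $k=l$ gives $-\Delta \underline u_{\rm out}=\underline v_{\rm out}$ by inspection. The second identity is slightly more delicate: the same scheme applied to $\underline v_{\rm out}$ produces, at order $br^{-l-4}$, a non-logarithmic excess proportional to $l(n-2-l)(n-6-2l)$, which vanishes exactly because $l=(n-6)/2$. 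Thus $-\Delta \underline v_{\rm out}=\underline w_{\rm out}$ on the nose.

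The bulk of the work is in the last estimate. Computing $-\Delta \underline w_{\rm out}$ yields three pieces: the $Lr^{-m-4}$ coefficient becomes $L^p r^{-m-6}$ by the defining formula for $L$; the logarithmic piece at order $br^{-l-6}$ has coefficient $-pL^{p-1}$ thanks to $P(l)=0$ from Part (b) of the Proposition; and the leftover non-logarithmic piece at order $br^{-l-6}$ carries the coefficient
\[
l(l+2)(n-2-l)(n-4-l)(n-10-2l) + (n-2-2l)(l+2)(l+4)(n-4-l)(n-6-l),
\]
which, upon substituting $l=(n-6)/2$ (so $n-10-2l=-4$, $n-2-2l=4$, $n-6-l=l$), collapses to a scalar multiple of $2l+6-n$ and hence to $0$. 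Consequently,
\[
-\Delta \underline w_{\rm out}(r) = L^p r^{-m-6} - pL^{p-1}\, b\, r^{-l-6}\log(r/R)
\]
holds exactly for all $r>0$.

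To close, I would note that for $r>\underline r_1$ we have $r>R$ by \eqref{OrderRunder} and $\underline u_{\rm out}(r)\ge 0$ by the definition of $\underline r_1$, which together force $z:=(b/L)\,r^{m-l}\log(r/R) \in [0,1]$. Lemma \ref{LemmaBinomialIne} then yields
\[
\underline u_{\rm out}^p = L^p r^{-mp}(1-z)^p \;\geqslant\; L^p r^{-m-6} - pL^{p-1}\, b\, r^{-l-6}\log(r/R),
\]
which matches $-\Delta \underline w_{\rm out}$ term-for-term, giving the desired inequality. The main obstacle is the bookkeeping of the non-logarithmic remainders at orders $r^{-l-4}$ and $r^{-l-6}$: each must cancel identically, and each does so only because of the critical-case identity $l=(n-6)/2$. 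In the supercritical regime $p>\pc$ neither cancellation is available, which is precisely why the construction in Section~3 needs the extra polynomial correction $cr^{-k}$ rather than a logarithmic one.
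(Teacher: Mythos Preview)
Your proposal is correct and follows essentially the same route as the paper: direct radial computation via the identities for $-\Delta r^{-k}$ and $-\Delta\bigl[r^{-k}\log(r/R)\bigr]$, the characteristic relation $P(l)=0$, and the binomial inequality of Lemma~\ref{LemmaBinomialIne}. You actually make explicit something the paper glosses over: the second equality $-\Delta\underline v_{\rm out}=\underline w_{\rm out}$ \emph{as written} in \eqref{OuterSubSolCase2} already requires the extra term $l(n-2-l)(n-6-2l)br^{-l-4}$ to vanish, and your observation that this, as well as the $br^{-l-6}$ remainder in $-\Delta\underline w_{\rm out}$, collapses precisely because $l=(n-6)/2$ in the critical case is the right explanation (the paper records only the factored identity $l(n-2-l)(n-10-2l)+(l+4)(n-6-l)(n-2-2l)=0$ without pointing to its origin).
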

\begin{proof}
The first and second lines of \eqref{PropOfSubSolCase2} can be easily verified by computing directly from the definition of $\underline u_{\rm out}$ and $\underline v_{\rm out}$. To obtain the third line of \eqref{PropOfSubSolCase2}, we make us of the following identity
\begin{align}\label{identity}
&\Delta \left( r^{-\alpha} \left(\log \frac{r}{R}\right)^{\gamma} \right) = -\alpha(n-2-\alpha)r^{-\alpha-2}\left(\log \frac{r}{R} \right)^{\gamma}\notag\\
& + \gamma(n-2-2\alpha) r^{-\alpha-2}\left(\log \frac{r}{R} \right)^{\gamma-1} +\gamma(\gamma-1) r^{-\alpha-2}\left(\log \frac{r}{R} \right)^{\gamma-2}.
\end{align}
 with   $\gamma=1$ and $\alpha = l+4$. Hence, we obtain
\[ \begin{split}
&-\Delta \underline w_{\rm out} - \underline u_{\rm out}^p\\
&= m(m+2)(m+4)(n-2-m)(n-4-m)(n-6-m)Lr^{-m-6} \\
&\qquad + l(l+2)(n-2-l)(n-4-l) 
\left[ \begin{split}
 &  -(l+4)(n-6-l) \log \frac{r}{R}\\
&+(n-10-2l)  \\
\end{split} \right]br^{-l-6} \\
&\qquad +(n-2-2l)(l+2)(n-4-l)(l+4)(n-6-l)br^{-l-6}\\
&\qquad -\left(Lr^{-m}-br^{-l}\log \frac{r}{R}\right)^p\\
&=m(m+2)(m+4)(n-2-m)(n-4-m)(n-6-m)Lr^{-m-6}\\
&\qquad -l(l+2)(l+4)(n-2-l)(n-4-l)(n-6-l)br^{-l-6}\log \frac{r}{R}\\
&\qquad -\left(Lr^{-m}-br^{-l}\log \frac{r}{R}\right)^p,
\end{split}
\]
here we notice  that 
\[
l(n-2-l)(n-10-2l) +(l+4)(n-6-l)(n-2-2l)=0.
\]
In order to apply Lemma \ref{LemmaBinomialIne}, we need to verify that 
\[ 
0\leqslant \underline u_{\rm out}(r) < Lr^{-m} 
\]
for all   $r \geqslant \underline r_1$. Indeed, 
\[
\underline u_{\rm out}(r)-Lr^{-m} = b\log \frac{r}{R} >0 
\]
for all $r>R$. Hence, 
\begin{align*}
-\Delta \underline w_{\rm out} - \underline u_{\rm out}^p= L^p r^{-m-6}- pL^{p-1}br^{-l-6}\log \frac{r}{R} - \left( Lr^{-m}-br^{-l}\log \frac{r}{R} \right)^p \leqslant  0.
\end{align*}
 Thus, \eqref{PropOfSubSolCase2} is proved.
\end{proof}
Simply repeating the way in the supercritical case, we complete the construction of the entire sub-solution of \eqref{LaneEmdenSystem}. The proof  is similar to Lemma \ref{LemmaSubSolution} and we omit the detail  here.
\begin{lemma}\label{LemmaSubSolCase2}
Let $\underline r_1, \underline r_2, \underline r_3$ and $\underline u_{\rm out}, \underline v_{\rm out}, \underline w_{\rm out}$ be defined as  in \eqref{UnderlineRcase2} and \eqref{OuterSubSolCase2}. We consider the following functions
\[
\underline u(x) :=
\begin{cases}
0, \quad & 0\leqslant |x|<\underline r_1,\\
\underline u_{\rm out}(|x|), \quad & |x|\geqslant \underline r_1, 
\end{cases}
\]
\[
\underline v(x) :=
\begin{cases}
0, \quad & 0\leqslant |x|<\underline r_2,\\
\underline v_{\rm out}(|x|), \quad & |x|\geqslant \underline r_2,
\end{cases}
\]
\[
\underline w(x) :=
\begin{cases}
0, \quad & 0\leqslant |x|<\underline r_3,\\
\underline w_{\rm out}(|x|), \quad & |x|\geqslant \underline r_3. 
\end{cases}
\]
Then $\underline u, \underline v, \underline w$ are nonnegative, Lipschitz continuous on $\R^n$ and such that 
\begin{equation}\label{SubSolutionCase2}
\begin{cases}
-\Delta \underline u \leqslant \underline v, \\
-\Delta \underline v \leqslant \underline w, \\
-\Delta \underline w \leqslant \underline u^p
\end{cases}
\end{equation}
in the distributional sense in $\R^n$.
\end{lemma}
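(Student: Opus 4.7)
The plan is to mirror the proof of Lemma~\ref{LemmaSubSolution} from the supercritical case, adapted to the present setting. The thresholds $\underline r_1,\underline r_2,\underline r_3$ are now defined implicitly through \eqref{UnderlineRcase2} rather than by explicit formulas, and the outer ansatz \eqref{OuterSubSolCase2} carries a logarithmic factor, but the ordering $R<\underline r_1<\underline r_2<\underline r_3$ together with the sign characterization $\underline u_{\rm out}(r)\leqslant 0 \iff r\leqslant \underline r_1$ (and the analogous statements for $\underline v_{\rm out},\underline w_{\rm out}$) provided by Lemma~\ref{LemmaComparisionUout} supply all the structural information I need.

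First I would observe that the nonnegativity of $\underline u,\underline v,\underline w$ is built into the definitions together with the sign information recalled above. Continuity across each interface $|x|=\underline r_i$ follows from the equalities $\underline u_{\rm out}(\underline r_1)=\underline v_{\rm out}(\underline r_2)=\underline w_{\rm out}(\underline r_3)=0$, while Lipschitz continuity on $\R^n$ is automatic since each outer piece is smooth outside the origin and its radial derivative is bounded on any compact subset of $(\underline r_i,\infty)$.

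Next I would verify \eqref{SubSolutionCase2} pointwise on each of the four open annular regions $\{0<|x|<\underline r_1\}$, $\{\underline r_1<|x|<\underline r_2\}$, $\{\underline r_2<|x|<\underline r_3\}$ and $\{|x|>\underline r_3\}$. On the first region all three functions vanish, so the inequalities are trivial, and on the last region they follow directly from the three lines of Lemma~\ref{LemmaPropOfSubSolCase2}. In the intermediate regions two of the three inequalities degenerate to zero on at least one side, and the remaining nontrivial inequality reduces either to the identity $-\Delta \underline u_{\rm out}=\underline v_{\rm out}$ (or $-\Delta \underline v_{\rm out}=\underline w_{\rm out}$) supplied by \eqref{PropOfSubSolCase2}, or to a chain of the form $-\Delta \underline u_{\rm out}=\underline v_{\rm out}\leqslant 0 = \underline v$, which uses precisely that $|x|$ lies below the threshold where $\underline v_{\rm out}$ becomes positive.

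Finally, to upgrade these pointwise inequalities to distributional inequalities on all of $\R^n$, I would check the transverse jump across each sphere $|x|=\underline r_i$. Since $\underline u_{\rm out}$ crosses from negative to positive values at $\underline r_1$ (and analogously for $\underline v_{\rm out},\underline w_{\rm out}$), its outer radial derivative there is nonnegative, so the jump in $\partial_r \underline u$ from $0$ inside to a nonnegative value outside contributes a nonpositive surface Dirac term to $-\Delta \underline u$, which only strengthens the desired inequality; the same reasoning handles $\underline v$ and $\underline w$. The main organizational hurdle, already handled in Lemma~\ref{LemmaSubSolution}, is the bookkeeping of the six sign conditions across the three intermediate zones; because the additional logarithmic factor in \eqref{OuterSubSolCase2} does not disturb this sign structure — it only changes the explicit location of the zeros, which is absorbed in the definition of the $\underline r_i$ — the argument goes through verbatim, which is why the author legitimately omits the details.
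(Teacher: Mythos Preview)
Your proposal is correct and is exactly the route the paper intends: the author omits the proof, referring back to Lemma~\ref{LemmaSubSolution}, and you reproduce that argument with the logarithmic modifications absorbed into Lemma~\ref{LemmaComparisionUout} and Lemma~\ref{LemmaPropOfSubSolCase2}. One small wording caveat: the ``iff'' sign characterization you state for $\underline u_{\rm out},\underline v_{\rm out},\underline w_{\rm out}$ is not literally true for very small $r$ (the $\phi_i$ tend to $+\infty$ as $r\to 0^+$), but what you actually use---namely $\underline v_{\rm out}\leqslant 0$ on $(\underline r_1,\underline r_2)$ and $\underline w_{\rm out}\leqslant 0$ on $(\underline r_2,\underline r_3)$---is precisely what the monotonicity computations in the proof of Lemma~\ref{LemmaComparisionUout} establish, so the argument stands.
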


\subsection{Construction of a super-solution to \eqref{LaneEmdenSystem}}
We divide $\R^n$ into two regions. The inner super-solutions are still remained as stated in Lemma \ref{LemmaInnerSuperSolution}. Next, we will show the explicit formula of outer super-solution as follows, given $\beta \in (0,1)$,
\begin{equation}\label{FormulaOuterSuperSolCase2}
\left\{
\begin{split}
\overline u_{\rm out}(r) &:= Lr^{-m} -b r^{-l} \log \frac{r}{R} + cr^{-l} \left( \log \frac{r}{R} \right)^{\beta},\\
\overline v_{\rm out}(r) &:= m(n-2-m)Lr^{-m-2} - \Big[ l(n-2-l) \log \frac{r}{R} - (n-2-2l)\Big] br^{-l-2} \\
& \quad +  \left[
\begin{split}
 &l(n-2-l) \left( \log \frac{r}{R} \right)^{\beta}\\
& -\beta(n-2-2l)  \left( \log \frac{r}{R} \right)^{\beta-1}\\
&+\beta(1-\beta)  \left( \log \frac{r}{R} \right)^{\beta-2}  
\end{split}
\right]cr^{-l-2}\\
\overline w_{\rm out}(r) &:=  m(m+2)(n-2-m)(n-4-m)Lr^{-m-4} \\
& \quad + (l+2)(n-4-l) \Big[ l(n-2-l) \log \frac{r}{R} -(n-2-2l) \Big]br^{-l-4}\\
& \quad +\left[
\begin{split}
& l (l+2) (n-2-l) (n-4-l) \left( \log \frac{r}{R} \right)^{\beta} \\
& - \beta(n-2-2l) (l+2)(n-4-l) \left( \log \frac{r}{R} \right)^{\beta-1}  \\
&   +\beta(1-\beta) (l(n-2-l)+(l+2)(n-4-l)) \left( \log \frac{r}{R} \right)^{\beta-2}  \\
&+ \beta(1-\beta)(2-\beta) (n-2-2l) \left( \log \frac{r}{R} \right)^{\beta-3} \\
& - \beta(1-\beta)(2-\beta)(3-\beta) \left( \log \frac{r}{R} \right)^{\beta-4} 
\end{split} 
\right]cr^{-l-4}
\end{split}
\right.
\end{equation}

\begin{lemma}\label{LemOuterSupSol}
Let $\overline u_{\rm out}, \overline v_{\rm out}, \overline w_{\rm out}$ be defined as in \eqref{FormulaOuterSuperSolCase2}. There exists $c_1>0$ such that for all $c>c_1$, there holds
\begin{equation}\label{OuterSuperSolutionCase2}
\begin{cases}
-\Delta \overline u_{\rm out} =  \overline v_{out} \quad& \text{for all} \quad r>R, \\
-\Delta \overline v_{\rm out} = \overline w_{out} \quad& \text{for all} \quad r>R, \\
-\Delta \overline w_{\rm out} \geqslant \overline u_{\rm out}^p \quad& \text{for all} \quad r>\overline r_1,
\end{cases}
\end{equation}
where 
\[
\overline r_1 : =R \exp\left( \left(\frac{c}{b}\right)^{\frac{1}{1-\beta}}\right).
\]
\end{lemma}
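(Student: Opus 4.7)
The first two equalities in \eqref{OuterSuperSolutionCase2} will follow from a direct application of the identity \eqref{identity} to each monomial $r^{-\alpha}(\log(r/R))^\gamma$ appearing in the definitions of $\overline u_{\rm out}$, $\overline v_{\rm out}$ and $\overline w_{\rm out}$. The apparently elaborate $\log$-expansions built into $\overline v_{\rm out}$ and $\overline w_{\rm out}$ are designed precisely so that the two lower-order corrections in \eqref{identity} (the $(\log)^{\gamma-1}$ and $(\log)^{\gamma-2}$ pieces) cascade from one line to the next, and the verification is mechanical bookkeeping over $\alpha\in\{m+2,l+2\}$ and $\gamma\in\{0,1,\beta,\beta-1,\beta-2,\beta-3\}$.

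For the third inequality I would first compute $-\Delta\overline w_{\rm out}$ using \eqref{identity} with $\alpha=l+4$ and the same range of $\gamma$, then collect the result as a polynomial in $\log(r/R)$ whose coefficients are proportional to $r^{-l-6}$ (plus an isolated $L^p r^{-m-6}$ term coming from the $Lr^{-m-4}$ piece). Simultaneously I write
\[
\overline u_{\rm out}^p = L^p r^{-mp}(1-z)^p, \quad z := \frac{b}{L}r^{m-l}\log\frac{r}{R} - \frac{c}{L}r^{m-l}\left(\log\frac{r}{R}\right)^{\beta},
\]
and observe that the definition $\overline r_1 = R\exp((c/b)^{1/(1-\beta)})$ yields $b\log(r/R) > c(\log(r/R))^{\beta}$ for $r>\overline r_1$, which gives $z>0$; enlarging $\overline r_1$ slightly (or choosing $c$ large) secures $z\le 1$. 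Lemma \ref{LemmaBinomialIne} then provides
\[
\overline u_{\rm out}^p \le L^p r^{-m-6} - pL^{p-1}br^{-l-6}\log\frac{r}{R} + pL^{p-1}cr^{-l-6}\left(\log\frac{r}{R}\right)^{\beta} + C_p L^{p-2}z^2 r^{-mp}.
\]

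Subtracting this from $-\Delta\overline w_{\rm out}$ and arranging by powers of $\log(r/R)$, the crucial cancellations are as follows. The characteristic equation $Q(l):=l(l+2)(l+4)(n-2-l)(n-4-l)(n-6-l)=pL^{p-1}$ simultaneously eliminates the $L^p r^{-m-6}$ contribution, the $br^{-l-6}\log(r/R)$ contribution, and the $cr^{-l-6}(\log(r/R))^{\beta}$ contribution. Because $p=\pc$ the number $l$ is a \emph{double} root of $Q(l)-pL^{p-1}$, hence $Q'(l)=0$, and this derivative identity forces a second pair of cancellations: the $b$-coefficient at level $(\log)^{0}$ and the $c$-coefficient at level $(\log)^{\beta-1}$ both vanish. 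This is the reason for the precise form of the correction terms in \eqref{FormulaOuterSuperSolCase2}. The leading surviving contribution to $-\Delta\overline w_{\rm out}-\overline u_{\rm out}^p$ is then a strictly positive multiple of $cr^{-l-6}(\log(r/R))^{\beta-2}$, produced essentially by the $\gamma(\gamma-1)$ summand of \eqref{identity} with $\gamma=\beta$.

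It remains to dominate the quadratic error $C_pL^{p-2}z^2 r^{-mp}=O\bigl(r^{m-2l-6}(\log(r/R))^{2}\bigr)$ by this leading good term. Since $l>m$, the ratio of error to signal is $O\bigl(c^{-1}r^{m-l}(\log(r/R))^{4-\beta}\bigr)$, which tends to $0$ as $r\to\infty$. The decisive point is that $\overline r_1=R\exp((c/b)^{1/(1-\beta)})$ depends exponentially on $c^{1/(1-\beta)}$, so $\overline r_1^{l-m}$ grows exponentially in $c^{1/(1-\beta)}$ while any penalty in $c$ is only polynomial; thus choosing $c\ge c_1$ sufficiently large (depending on $b,\beta,L,n,p$) makes the ratio strictly less than $1$ uniformly on $\{r\ge\overline r_1\}$, yielding $-\Delta\overline w_{\rm out}\ge\overline u_{\rm out}^p$. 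The hardest step is the algebraic bookkeeping in the double-root cancellation: one must carefully track the interference between log-corrections at several orders and verify that combining $Q(l)=pL^{p-1}$ with $Q'(l)=0$ eliminates exactly two consecutive coefficient layers, leaving a residual of the expected positive sign.
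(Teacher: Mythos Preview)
Your proposal is correct and follows essentially the same route as the paper: apply \eqref{identity} to verify the first two identities, expand $-\Delta\overline w_{\rm out}$ in powers of $\log(r/R)$, bound $\overline u_{\rm out}^p$ from above via Lemma~\ref{LemmaBinomialIne}, and show that the surviving $cr^{-l-6}(\log(r/R))^{\beta-2}$ term dominates the quadratic error $C_pL^{p-2}b^2r^{m-2l-6}(\log(r/R))^2$ once $c$ is large. The only cosmetic difference is that the paper substitutes the explicit value $l=(n-6)/2$ and computes the coefficients $A_i,B_i$ directly (finding $A_2=B_1=B_3=B_5=0$ and $B_2>0$), whereas you phrase the same cancellations abstractly via $Q(l)=pL^{p-1}$ and the double-root identity $Q'(l)=0$; these are equivalent, and your formulation arguably explains \emph{why} the particular $\log$-corrections in \eqref{FormulaOuterSuperSolCase2} were chosen.
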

\begin{proof}
Let $C_p$ be the constant appearing in Lemma \ref{LemmaBinomialIne} and $c_1$ be large enough such that 
\[
\left\{
\begin{split}
&(l-m) \left( \frac{c_1}{b} \right)^{\frac{1}{1-\beta}} > 4-\beta,\\
&(l-m)L \left( R \exp\left( \left(\frac{c_1}{b}\right)^{\frac{1}{1-\beta}}\right) \right)^{l-m} >b,\\
&\frac{c_1}{b} >  \left( \frac{24(2-\beta)(3-\beta)(3n^2-12n+44)}{3n^4-24n^3+72n^2-96n+304} \right)^{\frac{1-\beta}{2}},
\end{split}
\right. 
\]
as well as
\[
L  \left( R \exp\left( \left(\frac{c}{b}\right)^{\frac{1}{1-\beta}}\right) \right)^{l-m} > b \left( \frac{c}{b} \right)^{\frac{1}{1-\beta}}, 
\]
and
\begin{align*}
c \left( R \exp\left( \left(\frac{c}{b}\right)^{\frac{1}{1-\beta}}\right) \right)^{l-m} \left( \frac{c}{b} \right)^{\frac{\beta-4}{1-\beta}}
 > \frac{16C_p L^{p-2}b^2}{\beta(1-\beta)(3n^4-24n^3+72n^2-96n+304)}
\end{align*}
for all $c >c_1$. 
Thanks to \eqref{identity}, the first two equality of \eqref{OuterSuperSolutionCase2} can be checked directly. Thus, it suffices  to prove the third inequality of \eqref{OuterSuperSolutionCase2}, that is $-\Delta \overline w_{\rm out} \geqslant \overline u_{\rm out}^p$ for all $r > \overline r_1$. First, we compute $-\Delta \overline w_{\rm out}$. Using \eqref{identity} again, we deduce that
\begin{align*}
&-\Delta \overline w_{\rm out}(r) =  m(m+2)(m+4)(n-2-m)(n-4-m)(n-6-m)Lr^{-m-6} \\
&\qquad- l(l+2)(l+4)(n-2-l)(n-4-l)(n-6-l)br^{-l-6}  \log \frac{r}{R} \\
& \qquad + (l+2)(n-4-l)  
\left[ \begin{aligned}
 &l(n-2-l)(n-10-2l) \\
& +(l+4)(n-6-l)(n-2-2l)
\end{aligned} \right]
 b r^{-l-6}\\
& \qquad+ l(l+2)(n-2-l)(n-4-l)
\left[ \begin{aligned}
 & (l+4)(n-6-l) \left(  \log \frac{r}{R} \right)^{\beta} \\
&-\beta(n-10-2l) \left( \log \frac{r}{R} \right)^{\beta-1} \\
&+ \beta(1-\beta) \left( \log \frac{r}{R} \right)^{\beta-2} 
\end{aligned} \right] cr^{-l-6} \\
& \qquad - \beta(n-2-2l) (l+2)(n-4-l) 
\left[ \begin{aligned}
& (l+4)(n-6-l)  \left( \log \frac{r}{R} \right)^{\beta-1}  \\
& +(1-\beta)(n-10-2l) \left( \log \frac{r}{R} \right)^{\beta-2}\\
&- (1-\beta)(2-\beta) \left( \log \frac{r}{R} \right)^{\beta-3} 
\end{aligned}\right] cr^{-l-6} \\
&\qquad +  \beta(1-\beta) \Big[ l(n-2-l)+(l+2)(n-4-l)\Big]\\
&\qquad \qquad \qquad \qquad  \times 
 \left[ \begin{aligned} & (l+4)(n-6-l)  \left( \log \frac{r}{R} \right)^{\beta-2} \\
& +(2-\beta)(n-10-2l) \left( \log \frac{r}{R} \right)^{\beta-3} \\
&- (2-\beta)(3-\beta) \left( \log \frac{r}{R} \right)^{\beta-4} 
\end{aligned}\right] cr^{-l-6} \\
&\qquad+\beta(1-\beta)(2-\beta) (n-2-2l)
\left[ \begin{aligned}
& (l+4)(n-6-l)  \left( \log \frac{r}{R} \right)^{\beta-3}  \\
& +(3-\beta)(n-10-2l) \left( \log \frac{r}{R} \right)^{\beta-4}\\
&- (3-\beta)(4-\beta) \left( \log \frac{r}{R} \right)^{\beta-5} 
\end{aligned}\right] cr^{-l-6} \\
& \qquad - \beta(1-\beta)(2-\beta)(3-\beta) 
\left[ \begin{aligned} 
& (l+4)(n-6-l)  \left( \log \frac{r}{R} \right)^{\beta-4} \\
& +(4-\beta)(n-10-2l) \left( \log \frac{r}{R} \right)^{\beta-5}\\
&- (4-\beta)(5-\beta) \left( \log \frac{r}{R} \right)^{\beta-6}
\end{aligned} \right] cr^{-l-6}.\\
&= A_0 r^{-m-6} - A_1 r^{-l-6} \log \frac{r}{R} + A_2 r^{-l-6} \\
&\quad+\left[ \begin{aligned} 
&B_0  \left( \log \frac{r}{R} \right)^{\beta} - B_1  \left( \log \frac{r}{R} \right)^{\beta-1}+ B_2  \left( \log \frac{r}{R} \right)^{\beta-2}   + B_3  \left( \log \frac{r}{R} \right)^{\beta-3}\\
&-B_4  \left( \log \frac{r}{R} \right)^{\beta-4} + B_5  \left( \log \frac{r}{R} \right)^{\beta-5}+ B_6  \left( \log \frac{r}{R} \right)^{\beta-6} 
\end{aligned} \right]cr^{-l-6},
\end{align*}
where $A_i$ with $i=0,1,2$ are given as follows
\[
\begin{split}
&A_0 = m(m+2)(m+4)(n-2-m)(n-4-m)(n-6-m),\\
&A_1 =B_0= l(l+2)(l+4)(n-2-l)(n-4-l)(n-6-l),\\
&A_2 =  (l+2)(n-4-l) \Big[l(n-2-l)(n-10-2l)+(l+4)(n-6-l)(n-2-2l)\Big],
\end{split}
\]
and $B_i$ with $i=1,\dots,6$ are given as follows
\[
\begin{split}
&B_1= \beta A_2,\\
&B_2= \beta(1-\beta) \left[
\begin{split} 
&l(n-2-l)(l+2)(n-4-l) \\
&- (l+2)(n-4-l)(n-2-2l)(n-10-2l) \\
&+(l+4)(n-6-l) \big[l(n-2-l)+(l+2)(n-4-l)\big]
\end{split}
\right],\\
&B_3= \beta(1-\beta)(2-\beta) 
 \left[
\begin{split} 
&(n-2-2l)(l+2)(n-4-l)  \\
&+(n-10-2l) \Big[ l(n-2-l)+(l+2)(n-4-l)\Big]\\
&+(n-2-2l)(l+4)(n-6-l) 
\end{split}
\right],\\
&B_4 = \beta(1-\beta)(2-\beta)(3-\beta)
\left[
\begin{split}
 &(l+2)(n-4-l)+l(n-2-l) \\
&-(n-2-2l)(n-10-2l)+ (l+4)(n-6-l)
\end{split}
 \right],\\
&B_5 =  \beta(1-\beta)(2-\beta)(3-\beta)(4-\beta)\left[-(n-2-2l)-(n-10-2l)\right],\\
&B_6 = \beta(1-\beta)(2-\beta)(3-\beta)(4-\beta)(5-\beta).
\end{split}
\]
Recall that $l=(n-6)/2$ and $\beta \in(0,1)$. Therefore, it is easy to see that
\[
A_2=B_1=B_3= B_5=0  
\] that 
\[ B_2 = \frac{1}{16}\beta(1-\beta) (3n^4-24n^3+72n^2-96n+304)\]
and that
\[
B_4 = \frac{3}{4} \beta(1-\beta)(2-\beta)(3-\beta) (3n^2-12n+44).
\]
Observe that 
\begin{align*}
\frac{B_4}{\frac{1}{2}B_2} &= \frac{24(2-\beta)(3-\beta)(3n^2-12n+44)}{3n^4-24n^3+72n^2-96n+304} \cdot \left( \log \frac{r}{R} \right)^{-2}\\
&\leqslant\frac{24(2-\beta)(3-\beta)(3n^2-12n+44)}{3n^4-24n^3+72n^2-96n+304} \cdot \left( \log \frac{\overline r_1}{R} \right)^{-2}\\
& = \frac{24(2-\beta)(3-\beta)(3n^2-12n+44)}{3n^4-24n^3+72n^2-96n+304} \cdot \left( \frac{c_1}{b} \right)^{-\frac{2}{1-\beta}}\\
&<1.
\end{align*}
Hence, $-\Delta \overline w_{\rm out}(r)$ is bounded from below by the function
\begin{equation}\label{BoundedBelow}
\begin{split}
&L^p r^{-m-6} - pL^{p-1}br^{-l-6} \left( \log \frac{r}{R} \right) \\
&+ pL^{p-1}  \left( \log \frac{r}{R} \right)^{\beta} cr^{-l-6} +\frac{1}{2} B_2 \left( \log \frac{r}{R} \right)^{\beta-2}  cr^{-l-6}.
\end{split}
\end{equation}
In order to apply Lemma \ref{LemmaBinomialIne}, we must have $0<\overline u_{\rm out} <Lr^{-m}$ for all $r>\overline r_1$. Indeed, set 
\[
f(r) : = L r^{l-m}- b \log \frac{r}{R}
\]
when $ r>\overline r_1$. A direct computation shows that
\[
f'(r) = L r^{l-m-1} -\frac{b}{r} > \frac{1}{r} \left( L \overline r_1^{l-m} -b \right) >0,
\] 
 which yields that 
\[
\overline u_{out}(r) > r^{l} f(r) > \overline r_1^l f(\overline r_1)> 0 \quad \text{for all} \quad r> \overline r_1.
\]
Furthermore, it can be easily seen that 
\[
b\log \frac{r}{R} > c \left( \log \frac{r}{R} \right)^{\beta}
\]
 for all $r >\overline r_1$. Hence, we claim that $0<\overline u_{out}(r) < Lr^{-m}$.  Thanks to \eqref{BinomialIne}, we get 
\begin{equation}\label{BoundedAbove}
\begin{split}
\overline u_{\rm out}^p &= (Lr^{-m})^p \Big[ 1- \left( \frac{b}{L}r^{m-l}\log \frac{r}{R} -\frac{c}{L} r^{m-l} \left( \log \frac{r}{R} \right)^{\beta} \right)\Big]^p\\
&\leqslant  (Lr^{-m})^p 
\left[ \begin{split}
& 1- \left( \frac{b}{L}r^{m-l}\log \frac{r}{R} -\frac{c}{L} r^{m-l} \left( \log \frac{r}{R} \right)^{\beta} \right) \\
&+ C_p \left( \frac{b}{L}r^{m-l}\log \frac{r}{R} -\frac{c}{L} r^{m-l} \left( \log \frac{r}{R} \right)^{\beta} \right)^2 
 \end{split} \right]\\
&\leqslant L^p r^{-m-6} - pL^{p-1}br^{-l-6} \log \frac{r}{R}+ pL^{p-1}cr^{-l-6} \left( \log \frac{r}{R} \right)^{\beta} \\
& \qquad\qquad+C_p L^{p-2}b^2 r^{m-2l-6}  \left( \log \frac{r}{R} \right)^2
\end{split}
\end{equation} 
for $r >\overline r_1$. Due to \eqref{BoundedBelow} and \eqref{BoundedAbove} , we obtain
\begin{align*}
-\Delta \overline w_{\rm out} - \overline u_{\rm out}^p &\geqslant \frac{\beta(1-\beta) (3n^4-24n^3+72n^2-96n+304)}{16} \cdot cr^{-l-6} \left( \log \frac{r}{R} \right)^{\beta-2}\\
&\qquad \qquad - C_p L^{p-2}b^2r^{m-2l-6}\left( \log \frac{r}{R} \right)^{2},
\end{align*}
for $r\geqslant \overline r_1$. We can see that the right-hand side of the previous inequality has the same sign with that of 
\[
g(r) - \frac{16C_p L^{p-2}b^2}{c\beta(1-\beta)(3n^4-24n^3+72n^2-96n+304)},
\]
where
\[
g(r):= r^{l-m} \left( \log \frac{r}{R} \right)^{\beta-4}.
\]
Computing directly, for all $r\geqslant \overline r_1$, we know that
\begin{align*}
g'(r) &= r^{l-m-1} \left( \log \frac{r}{R} \right)^{\beta-5} \Big[ (l-m)\log \frac{r}{R} +\beta -4\Big]\\
& > r^{l-m-1} \left( \log \frac{r}{R} \right)^{\beta-5} \Big[  (l-m)\log \frac{\overline r_1}{R} +\beta -4\Big]\\
&= r^{l-m-1} \left( \log \frac{ r}{R} \right)^{\frac{1}{1-\beta}} \Big[  (l-m)\left( \frac{c}{b}\right)^{\frac{1}{1-\beta}} +\beta -4\Big]\\
& >0.
\end{align*}
Hence, 
\[
g(r) > g(\overline r_1) > C_p L^{p-2}b^2r^{m-2l-6}\left( \log \frac{r}{R} \right)^{2}
\]
due to the choice of $c$. This is equivalent to saying that
\[
-\Delta \overline w_{\rm out} \geqslant \overline u_{\rm out}^p
\]
for all $r\geqslant \overline r_1$. This establishes \eqref{OuterSuperSolutionCase2} as claimed.
\end{proof}
The following result is  an analouge of Lemma \ref{LemmaBorderline}.
\begin{lemma}\label{BorderlineEps}
Let $\beta \in (0,1)$ and  $\overline r_1$ be given in Lemma \ref{LemOuterSupSol}. Then, there exists $c_2>0$ such that  given any $c>c_2$, we can select positive real numbers $\overline r_2, \overline r_3 $ and $\epsilon_0$ such that for all $\epsilon \in (0,\epsilon_0)$, there exists  numbers $\overline r_1(\epsilon), \overline r_2(\epsilon), \overline r_3(\epsilon)$ satisfying
\begin{equation}
\overline r_1(\epsilon) := \sup\{r>0 : \overline u_{\rm in,\epsilon} < \overline u_{\rm out} \text{ in } (0,r)\} \in (0,+\infty],
\end{equation}
\begin{equation}
\overline r_2(\epsilon) := \sup\{r>0 : \overline v_{\rm in,\epsilon} < \overline v_{\rm out} \text{ in } (0,r)\} \in (0,+\infty],
\end{equation}
and
\begin{equation}
\overline r_3(\epsilon) := \sup\{r>0 : \overline w_{\rm in,\epsilon} < \overline w_{\rm out} \text{ in } (0,r)\} \in (0,+\infty],
\end{equation}
as well as
\begin{equation}\label{OrderRover}
\overline r_1 < \overline r_1(\epsilon) < \overline r_2< \overline r_2(\epsilon)<\overline r_3 <\overline r_3(\epsilon) < \overline r_3+1.
\end{equation}
Here, the functions $\overline u_{\rm in,\epsilon}, \overline v_{\rm in,\epsilon}, \overline w_{\rm in,\epsilon}, \overline u_{\rm out}, \overline v_{\rm out}, \overline w_{\rm out}$ are defined through \eqref{InnerSuperSolution} and \eqref{OuterSuperSolutionCase2}.
\end{lemma}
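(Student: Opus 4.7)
The plan is to mirror the proof of Lemma \ref{LemmaBorderline}, while carrying out the additional perturbation analysis required by the logarithmic corrections. I first introduce the three gap functions $\psi_{1,\epsilon}(r) := \overline u_{\rm in,\epsilon}(r) - \overline u_{\rm out}(r)$, $\psi_{2,\epsilon}(r) := \overline v_{\rm in,\epsilon}(r) - \overline v_{\rm out}(r)$, and $\psi_{3,\epsilon}(r) := \overline w_{\rm in,\epsilon}(r) - \overline w_{\rm out}(r)$. Since $(r^2+\epsilon)^{-k/2}$ is strictly decreasing in $\epsilon$ for every fixed $r>0$ and $k>0$, each $\psi_{i,\epsilon}$ is monotone increasing as $\epsilon\searrow 0$ and converges pointwise (in fact uniformly on every compact subset of $(R,\infty)$, by Dini's theorem) to a limit $\psi_i$. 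Setting $t := \log(r/R)$, the limit function $\psi_1$ reduces to $\psi_1(r) = r^{-l}[bt - ct^{\beta}]$, while $\psi_2$ and $\psi_3$ share the same leading balance $bt - ct^{\beta}$ multiplied by a positive dimensional constant ($l(n-2-l)$ for $\psi_2$ and $l(l+2)(n-2-l)(n-4-l)$ for $\psi_3$), plus strictly subdominant contributions of the form $b$, $ct^{\beta-1}$, $ct^{\beta-2}$, and, only in $\psi_3$, also $ct^{\beta-3}$ and $ct^{\beta-4}$.

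I then analyse the zeros of the $\psi_i$. The leading balance $bt = ct^{\beta}$ admits the unique positive solution $t_{*} = (c/b)^{1/(1-\beta)}$, corresponding to $r = \overline r_1$. The subleading contributions perturb this shared root by an amount of order $t_*^{\beta - j}/(b(1-\beta))$, which shrinks as $c\to\infty$. Consequently, for $c$ greater than some threshold $c_2$, each $\psi_i$ is strictly monotone through a unique zero $\overline r_i$ and changes sign from negative to positive there. To prove the strict ordering $\overline r_1 < \overline r_2 < \overline r_3$, I evaluate at the previous root: using $ct_*^{\beta} = bt_*$ and $ct_*^{\beta-1} = b$, the residual of $\psi_2$ at $\overline r_1$ reduces to a negative multiple of $(n-2-2l)br^{-l-2}$ coming from the extra constant-in-$t$ contribution to $\overline v_{\rm out}$, and an analogous computation with the corresponding extra term in $\overline w_{\rm out}$ gives $\psi_3(\overline r_2) < 0$. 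Here the critical-case identity $l = (n-6)/2$ is essential, as it makes $n-2-2l = 4 > 0$ and forces the vanishing of the coefficients $A_2 = B_1 = B_3 = B_5 = 0$ identified in Lemma \ref{LemOuterSupSol}, leaving only residuals of definite sign that produce the separation.

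Finally, I transfer the information to positive $\epsilon$. Each $\psi_i$ is continuous and passes transversally through zero at $\overline r_i$, so $\psi_i(\overline r_i + \eta) > 0$ for every $\eta > 0$. The uniform monotone convergence $\psi_{i,\epsilon}\nearrow \psi_i$ supplies $\epsilon_i(\eta)>0$ such that $\psi_{i,\epsilon}(\overline r_i + \eta) > 0$ for all $\epsilon \in (0,\epsilon_i(\eta))$, which, combined with the negativity of $\psi_{i,\epsilon}$ immediately to the left of $\overline r_i$ (the inner piece is bounded at the origin while the outer piece dominates), forces $\overline r_i < \overline r_i(\epsilon) < \overline r_i + \eta$. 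Taking $\eta$ smaller than $\min\{(\overline r_2 - \overline r_1)/2, (\overline r_3 - \overline r_2)/2, 1\}$ and $\epsilon_0 := \min_{i=1,2,3}\epsilon_i(\eta)$ then yields the chain \eqref{OrderRover}.

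The principal obstacle is the ordering step. In Lemma \ref{LemmaBorderline} the limiting roots were distinct algebraic functions of $c/b$ and their separation was automatic. Here the leading balance is identical across all three gap functions, so the three roots collapse to the common leading value $t_{*}$, and only the subleading logarithmic corrections distinguish them. Controlling the perturbation quantitatively is what forces the threshold $c > c_2$, and the sign of the residual at the common leading value depends delicately on the critical-case identity $l = (n-6)/2$, which causes several potentially obstructing terms in the expansions of $\overline v_{\rm out}$ and $\overline w_{\rm out}$ to vanish and leaves the surviving terms with the correct sign.
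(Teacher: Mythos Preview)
Your plan matches the paper's proof: introduce the gap functions, pass to the $\epsilon\searrow 0$ limits $\varphi_i$, locate a unique sign-changing zero $\overline r_i$ for each, order them, and then transfer back to small $\epsilon$ by monotone convergence exactly as in Lemma~\ref{LemmaBorderline}.

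Two places where your sketch is loose relative to the paper. First, the vanishings $A_2=B_1=B_3=B_5=0$ you cite from Lemma~\ref{LemOuterSupSol} occur in the expansion of $-\Delta\overline w_{\rm out}$, not in the gap functions $\varphi_2,\varphi_3$ themselves; in the latter the coefficients of $(\log(r/R))^{\beta-1}$ and $(\log(r/R))^{\beta-3}$ are nonzero multiples of $n-2-2l=4$ and must be carried through the computation. Second, the step $\varphi_3(\overline r_2)<0$ is not simply ``analogous'' to $\varphi_2(\overline r_1)<0$: since $\overline r_2$ is only implicitly defined, the paper substitutes the relation encoded in $\varphi_2(\overline r_2)=0$ into $\varphi_3(\overline r_2)$ to obtain an explicit expression, and both its negativity and the monotonicity of $r^{l+2}\varphi_2$ and $r^{l+4}\varphi_3$ (which gives uniqueness of the zeros) require additional explicit largeness conditions on $c$ that become part of the definition of $c_2$. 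Your perturbation heuristic points in the right direction but does not by itself supply these.
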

\begin{proof}
Let $c_2$ be chosen such that
\[
\left\{
\begin{split}
&l(n-2-l)b-\beta(n-2-2l)b^{\frac{2-\beta}{1-\beta}} c_2^{-\frac{1}{1-\beta}} >0,\\
&l(n-2-l) \left(\frac{c_2}{b}\right)^{\frac{2}{1-\beta}}
 - (2-\beta)(3-\beta) >0,
\end{split}
\right.
\]
and that 
\[
\begin{split}
&l(l+2)(n-2-l)(n-4-l)b - \beta  \Big[l(n-2-l)+(l+2)(n-4-l)\Big]  b^{\frac{3-\beta}{1-\beta}} c_2^{-\frac{2}{1-\beta}}\\
&\qquad \qquad-  \beta(2-\beta) (n-2-2l)  b^{\frac{4-\beta}{1-\beta}} c_2^{-\frac{3}{1-\beta}} >0.
\end{split}
\]
Let us denote the following gap functions,
\[
\left\{
\begin{split}
\varphi_{1,\epsilon}(r) &:= \overline u_{\rm in,\epsilon}(r) -\overline u_{\rm out}(r),\\
\varphi_{2,\epsilon}(r) &:= \overline v_{\rm in,\epsilon}(r) -\overline v_{\rm out}(r), \\
\varphi_{3,\epsilon}(r) &:= \overline w_{\rm in,\epsilon}(r) -\overline w_{\rm out}(r). 
\end{split}
\right.
\]
Then as $\epsilon \searrow 0$, we have in $C_{loc}^1((R,\infty))$ that
\begin{equation}\label{ConvPhi1}
\varphi_{1,\epsilon}(r) \nearrow \varphi_1(r) := b r^{-l} \log \frac{r}{R} - cr^{-l}
\left( \log \frac{r}{R}\right)^{\beta}, 
\end{equation}
that
\begin{equation}\label{ConvPhi2}
\begin{split}
\varphi_{2,\epsilon}(r) \nearrow \varphi_2(r) &:= \Big[ l(n-2-l) \log \frac{r}{R} - (n-2-2l)\Big] br^{-l-2} \\
&- \left[ \begin{split}
&   l(n-2-l) \left( \log \frac{r}{R} \right)^{\beta}\\
& -\beta(n-2-2l)  \left( \log \frac{r}{R} \right)^{\beta-1} \\
& +\beta(1-\beta)  \left( \log \frac{r}{R} \right)^{\beta-2} 
\end{split} \right]
cr^{-l-2},
\end{split}
\end{equation}
and that
\[ \varphi_{3,\epsilon}(r) \nearrow \varphi_3(r) \]
where
\begin{equation}\label{ConvPhi3}
\begin{split}
 \varphi_3(r) &:= (l+2)(n-4-l) \Big[  l(n-2-l) \log \frac{r}{R}  -(n-2-2l) \Big] br^{-l-4} \\
&- \left[
\begin{split}
 & l (l+2) (n-2-l) (n-4-l) \left( \log \frac{r}{R} \right)^{\beta} \\
&- \beta(n-2-2l) (l+2)(n-4-l) \left( \log \frac{r}{R} \right)^{\beta-1}\\
&+\beta(1-\beta)   [l(n-2-l)+(l+2)(n-4-l)] \left( \log \frac{r}{R} \right)^{\beta-2}\\
& + \beta(1-\beta)(2-\beta) (n-2-2l) \left( \log \frac{r}{R} \right)^{\beta-3}\\
& - \beta(1-\beta)(2-\beta)(3-\beta) \left( \log \frac{r}{R} \right)^{\beta-4} 
\end{split}
\right]cr^{-l-4}.
\end{split}
\end{equation}
Similarly to \eqref{Psi1}, \eqref{Psi2} and \eqref{Psi3}, we have to specify the sign of the limiting functions $\varphi_i$ with $i=1,2,3$. First, it follows from the choice of $\overline r_1$ stated in Lemma \ref{LemOuterSupSol} that
\[ \varphi_1
\begin{cases}
<0 \quad&\text{for all } r<\overline r_1,\\
=0 \quad&\text{for } r=\overline r_1,\\
>0 \quad&\text{for all } r>\overline r_1.
\end{cases}
\]
Now a direct calculation shows that
\[
\begin{split}
\overline r_1^{l+2} \varphi_2(\overline r_1) 
&= \Big[ l(n-2-l) \left( \frac{c}{b}\right)^{\frac{1}{1-\beta}} - (n-2-2l) \Big] b \\
& - \Big[ l(n-2-l) \left( \frac{c}{b}\right)^{\frac{\beta}{1-\beta}} -\beta(n-2-2l)  \left( \frac{c}{b} \right)^{-1} +\beta(1-\beta)  \left( \frac{c}{b} \right)^{\frac{\beta-2}{1-\beta}} \Big]c \\
&=-(1-\beta)(n-2-2l)b -\beta(1-\beta)^{\frac{2-\beta}{1-\beta}} c^{-\frac{1}{1-\beta}}\\
&<0,
\end{split}
\]
and that
\[ \begin{split}
&\frac{d}{dr} \left( r^{l+2} \varphi_2(r) \right)
 = l(n-2-l) \frac{b}{r} -
\left[ \begin{split}
&  \beta l(n-2-l) \left( \log \frac{r}{R} \right)^{\beta-1}\\
& + \beta(1-\beta) (n-2-2l)  \left( \log \frac{r}{R} \right)^{\beta-2}\\
 &- \beta(1-\beta)(2-\beta)  \left( \log \frac{r}{R} \right)^{\beta-3} 
\end{split} \right]
 \frac{c}{r}\\
&\geqslant \frac{1}{r} \left[ \begin{split}
&\beta l(n-2-l) b - (1-\beta) (n-2-2l)   \left( \frac{c}{b} \right)^{-1} c  \\
& - \beta(1-\beta)  (n-2-2l)  \left( \frac{c}{b} \right)^{\frac{\beta-2}{1-\beta}} c
\end{split} \right]  \\
& =\frac{1}{r} \Big[ (1-\beta) l(n-2-l) b - \beta(1-\beta) (n-2-2l) b^{\frac{2-\beta}{1-\beta}}c^{-\frac{1}{1-\beta}} \Big]\\
&>0.
\end{split}\]
Here we have used the fact that $\beta \in(0,1)$ and that $2l=n-6 < n-2$. These estimates implies that there exists some $\overline r_2 > \overline r_1$ such that
\[ \varphi_2
\begin{cases}
<0 \quad&\text{for all } r<\overline r_2,\\
=0 \quad&\text{for } r=\overline r_2,\\
>0 \quad&\text{for all } r>\overline r_2.
\end{cases}
\]
%
Now we consider the limiting function $\varphi_3$ The fact that $\varphi_2$ has only one positive root $\overline r_2$ leads us to 
\begin{equation*}
\begin{split}
&\Big[ l(n-2-l) \log \frac{\overline r_2}{R} - (n-2-2l)\Big] b = \left[ \begin{split}
& l(n-2-l) \left( \log \frac{r}{R} \right)^{\beta} \\
& -\beta(n-2-2l)  \left( \log \frac{r}{R} \right)^{\beta-1}\\
& +\beta(1-\beta)  \left( \log \frac{r}{R} \right)^{\beta-2} 
\end{split} \right]c
\end{split}
\end{equation*}
We subsitute into $\varphi_3$ to get  
\[\begin{split}
\varphi_3(\overline r_2) = - c \beta(1-\beta) \left( \log \frac{\overline r_2}{R} \right)^{\beta -4} 
\left[ \begin{split} 
& l(n-2-l) \left( \log \frac{\overline r_2}{R} \right)^2\\
& + (2-\beta)(n-2-2l)  \log \frac{\overline r_2}{R}\\
& - (2-\beta)(3-\beta) 
\end{split} \right].
\end{split}\]
Thanks to $\overline r_2 > \overline r_1$, we have $\varphi_3(\overline r_2) <0$. Furthermore, a direct computation shows that 
\[\begin{split}
r \cdot \frac{d}{dr} \left( r^{l+4} \varphi_3(r) \right) &= l(l+2)(n-2-l)(n-4-l)b \\
&- \left[ 
\begin{split}
& l (l+2) (n-2-l) (n-4-l) \left( \log \frac{r}{R} \right)^{\beta-1} \\
& - \beta(n-2-2l) (l+2)(n-4-l) \left( \log \frac{r}{R} \right)^{\beta-2}   \\
&  +\beta(1-\beta)   \Big[l(n-2-l)+(l+2)(n-4-l)\Big] \left( \log \frac{r}{R} \right)^{\beta-3}  \\
&  + \beta(1-\beta)(2-\beta) (n-2-2l) \left( \log \frac{r}{R} \right)^{\beta-4} \\
&  - \beta(1-\beta)(2-\beta)(3-\beta) \left( \log \frac{r}{R} \right)^{\beta-5} 
\end{split} \right]c .
\end{split} \]
Since $\beta \in(0,1)$, for $r > \overline r_2 ( > \overline r_1)$, the  right-hand side  is bounded from below by 
\[ \begin{split}
 &l(l+2)(n-2-l)(n-4-l)b \\
&- \left[
\begin{split} &l (l+2) (n-2-l) (n-4-l) \left(\frac{c}{b} \right)^{-1} \\
&  - \beta(n-2-2l) (l+2)(n-4-l) \left(  \frac{c}{b} \right)^{\frac{\beta-2}{1-\beta}}  \\
&  +\beta(1-\beta)   [l(n-2-l)+(l+2)(n-4-l)]  \left(  \frac{c}{b} \right)^{\frac{\beta-3}{1-\beta}} \\
&  + \beta(1-\beta)(2-\beta) (n-2-2l) \left(  \frac{c}{b} \right)^{\frac{\beta-4}{1-\beta}}  \\
&   - \beta(1-\beta)(2-\beta)(3-\beta) \left(  \frac{c}{b} \right)^{\frac{\beta-5}{1-\beta}} 
\end{split} \right]c,
\end{split} \]
which is larger than
\begin{align*}
& (1-\beta)l(l+2)(n-2-l)(n-4-l)b \\
&- \beta(1-\beta)   [l(n-2-l)+(l+2)(n-4-l)]  b^{\frac{3-\beta}{1-\beta}} c^{-\frac{2}{1-\beta}}\\
&-  \beta(1-\beta)(2-\beta) (n-2-2l)  b^{\frac{4-\beta}{1-\beta}} c^{-\frac{3}{1-\beta}} >0.
\end{align*}
Hence, the function $\varphi_3$ admits a unique zero point $\overline r_3$, which immediately yields
\[\varphi_3
\begin{cases}
<0 \quad&\text{for all } r<\overline r_3,\\
=0 \quad&\text{for } r=\overline r_3,\\
>0 \quad&\text{for all } r>\overline r_3.
\end{cases}
\]
Having the sign of all limiting function $\varphi_i$ with $i=1,2,3$ we can repeat the argument used in the proof of Lemma \ref{LemmaBorderline} to conclude the lemma.
\end{proof}

We are now in position to construct a super-solution to \eqref{LaneEmdenSystem}.
\begin{lemma}\label{LemmaSuperSolutionCase2}
Let $p=\pc$.
Then there exists $k>l,c>0$ and $\epsilon_0 >0$ such that whenever $\epsilon \in (0,\epsilon_0)$, the functions
\begin{equation} 
\overline u_{\epsilon}(x) := 
\begin{cases}
\overline u_{\rm in,\epsilon}(|x|), \quad & 0\leqslant |x|<\overline r_1(\epsilon),\\
\overline u_{\rm out}(|x|), \quad & |x|\geqslant \overline r_1(\epsilon),
\end{cases}
\end{equation}
\begin{equation} 
\overline v_{\epsilon}(x) := 
\begin{cases}
\overline v_{\rm in,\epsilon}(|x|), \quad & 0\leqslant |x|<\overline r_2(\epsilon),\\
\overline v_{\rm out}(|x|), \quad & |x|\geqslant \overline r_2(\epsilon). 
\end{cases}
\end{equation}
\begin{equation} 
\overline w_{\epsilon}(x) := 
\begin{cases}
\overline w_{\rm in,\epsilon}(|x|), \quad & 0\leqslant |x|<\overline r_3(\epsilon),\\
\overline w_{\rm out}(|x|), \quad & |x|\geqslant \overline r_3(\epsilon), 
\end{cases}
\end{equation}
are nonnegative, Lipschitz continuous on $\R^n$ and satisfy 
\begin{equation}\label{SuperSolutionCase2}
\begin{cases}
-\Delta \overline u_{\epsilon} \geqslant  \overline v_{\epsilon},  \\
-\Delta \overline v_{\epsilon} \geqslant \overline w_{\epsilon}, \\
-\Delta \overline w_{\epsilon} \geqslant \overline u_{\epsilon}^p
\end{cases}
\end{equation}
in the distributional sense on $\R^n$.
\end{lemma}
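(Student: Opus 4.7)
The plan is to follow verbatim the four--region argument that proved Lemma~\ref{LemmaSuperSolution} in the supercritical case, replacing only the underlying ingredients: the inner bounds come from Lemma~\ref{LemmaInnerSuperSolution} (which is independent of whether $p=\pc$ or $p>\pc$), the outer bounds now come from Lemma~\ref{LemOuterSupSol}, and the borderline radii $\overline r_1(\epsilon),\overline r_2(\epsilon),\overline r_3(\epsilon)$ with their ordering $\overline r_1 < \overline r_1(\epsilon)<\overline r_2<\overline r_2(\epsilon)<\overline r_3<\overline r_3(\epsilon)<\overline r_3+1$ are provided by Lemma~\ref{BorderlineEps}. First, I would fix $\beta\in(0,1)$, then $c>\max\{c_1,c_2\}$ (with $c_1,c_2$ from Lemmas~\ref{LemOuterSupSol},~\ref{BorderlineEps}), and finally $\epsilon_0$ from Lemma~\ref{BorderlineEps}.

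Nonnegativity and Lipschitz continuity are immediate: on each piece the functions are smooth and positive, and by the very definitions \eqref{BorderlineR1}--\eqref{BorderlineR3} (adapted to the critical setting) the inner and outer components coincide at $|x|=\overline r_i(\epsilon)$, so the gluing is continuous. Since the inner and outer parts are smooth radial functions with bounded first derivatives on compact annuli, Lipschitz continuity across the three spheres $|x|=\overline r_i(\epsilon)$ follows as in Lemma~\ref{LemmaSubSolution}.

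For the distributional inequalities \eqref{SuperSolutionCase2} I would split $\R^n$ into the four open regions $\{0<|x|<\overline r_1(\epsilon)\}$, $\{\overline r_1(\epsilon)<|x|<\overline r_2(\epsilon)\}$, $\{\overline r_2(\epsilon)<|x|<\overline r_3(\epsilon)\}$, $\{|x|>\overline r_3(\epsilon)\}$, treat each pointwise by the relevant component lemma, and then invoke the standard fact that a continuous function which is a classical supersolution off a locally finite union of smooth hypersurfaces, and whose radial derivative jumps downward across each such sphere, is a distributional supersolution on all of $\R^n$. The inner region and the outermost region are handled directly by Lemmas~\ref{LemmaInnerSuperSolution} and~\ref{LemOuterSupSol}, respectively (note $\overline r_1(\epsilon)>\overline r_1$, so Lemma~\ref{LemOuterSupSol}'s restriction $r>\overline r_1$ is respected). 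In the intermediate annulus $\overline r_1(\epsilon)<|x|<\overline r_2(\epsilon)$ we have $\overline u_\epsilon=\overline u_{\rm out}$ but $\overline v_\epsilon=\overline v_{\rm in,\epsilon}$, $\overline w_\epsilon=\overline w_{\rm in,\epsilon}$; the first inequality uses $-\Delta\overline u_{\rm out}=\overline v_{\rm out}\geqslant\overline v_{\rm in,\epsilon}$ (by the defining property of $\overline r_2(\epsilon)$) and the other two come from Lemma~\ref{LemmaInnerSuperSolution} together with $\overline u_{\rm in,\epsilon}\geqslant\overline u_{\rm out}$ on that annulus (by the defining property of $\overline r_1(\epsilon)$). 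The annulus $\overline r_2(\epsilon)<|x|<\overline r_3(\epsilon)$ is handled symmetrically, now with $\overline v_\epsilon=\overline v_{\rm out}$ and $\overline w_\epsilon=\overline w_{\rm in,\epsilon}$, using $\overline v_{\rm in,\epsilon}\geqslant\overline v_{\rm out}$ there and the pointwise identities $-\Delta\overline u_{\rm out}=\overline v_{\rm out}$, $-\Delta\overline v_{\rm out}=\overline w_{\rm out}\geqslant\overline w_{\rm in,\epsilon}$ together with $\overline u_{\rm in,\epsilon}\geqslant\overline u_{\rm out}$ for the $u^p$ estimate.

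The only genuinely delicate point, which is the main obstacle, is verifying the distributional sign of the singular measure supported on $\{|x|=\overline r_i(\epsilon)\}$. Since each $\overline u_\epsilon,\overline v_\epsilon,\overline w_\epsilon$ is continuous and the outer piece is strictly larger than the inner piece just beyond the corresponding $\overline r_i(\epsilon)$ while smaller just before, the radial derivative cannot jump upward across the sphere; hence the distributional Laplacian carries a nonpositive surface measure there, which only strengthens $-\Delta(\cdot)\geqslant\cdots$. A short computation of the one--sided radial derivatives of $\overline u_{\rm in,\epsilon}$ and $\overline u_{\rm out}$ (and similarly for $\overline v$, $\overline w$) confirms this monotonicity of the gluing, after which integration by parts against a nonnegative test function in $C_c^\infty(\R^n)$ yields \eqref{SuperSolutionCase2}, completing the proof.
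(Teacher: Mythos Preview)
Your proposal is correct and follows exactly the route the paper indicates: the paper simply writes that the proof is similar to that of Lemma~\ref{LemmaSuperSolution} and omits the details, and your four-region argument, with Lemmas~\ref{LemmaInnerSuperSolution}, \ref{LemOuterSupSol}, and \ref{BorderlineEps} replacing their supercritical counterparts, is precisely that similarity spelled out. Your additional care about the sign of the surface measure at the interfaces $|x|=\overline r_i(\epsilon)$ (via the downward jump of the radial derivative when gluing the minimum of two smooth radial functions) goes slightly beyond what the paper makes explicit, but is the correct justification for the distributional inequalities.
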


The proof of Lemma \ref{LemmaSuperSolutionCase2} is similar to that Lemma \ref{LemmaSuperSolution}; hence we omit its details.

\section{Proof of Theorem  \ref{MainThmSupercriticalCase} and \ref{MainThmCriticalCase}}

\subsection{Comparision principle and monotonicity property for parabolic systems of cooperative type.} One of key ingredients in proving Theorem \ref{MainThmSupercriticalCase} and \ref{MainThmCriticalCase} is a comparision principle and monotonicity property for parabolic systems of cooperative type. 
\begin{lemma}\label{ComparisionPrinciple}
Let $0<T<\infty$ and $\Omega$ be an arbitrary domain (bounded or unbounded) in $\R^n$,  we denote that $\Omega_T :=\Omega \times (0,T)$ and $\P_T$ is parabolic boundary of  $\Omega_T$.  For $i=1,2,3$, let $f_i =f_i(u_1,u_2,u_3) : \R^3 \to \R$ be $C^1$--functions such that $\partial_{u_i}f_j \geqslant 0$ for $i\neq j$ and  $u_i,v_i$ be functions such that 
\[
u_i, v_i  \in  C(\overline \Omega \times (0,T))\cap C( [0,T), L_{loc}^2(\overline \Omega)) \cap L^{\infty}(\Omega_T),
\] 
as well as
\[
\partial_t u_i , \nabla u_i  , \nabla^2 u_i  \in L_{loc}^2(\Omega_T), \quad \text{and} \quad \partial_t v_i , \nabla v_i  , \nabla^2 v_i  \in L_{loc}^2(\Omega_T).
\]
 We suppose that $u_i \leqslant v_i$ on $\P_T$ and 
\[
\partial_t u_i -d_i \Delta u_i -f_i(u_1,u_2,u_3) \leq \partial_t v_i -d_i \Delta v_i -f_i(v_1,v_2,v_3) 
\]
a.e. in $\Omega_T$, then 
\[
u_i \leqslant v_i \quad \text{in } \Omega_T,
\]
for all  $ i=1,2,3$.
\end{lemma}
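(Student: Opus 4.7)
The plan is to reduce the three differential inequalities to a linear cooperative system for the differences $w_i := u_i - v_i$, and then run a weighted $L^2$ energy estimate. Since $f_i \in C^1$ and $u_j, v_j \in L^\infty(\Omega_T)$, the fundamental theorem of calculus yields
\[
f_i(u_1,u_2,u_3) - f_i(v_1,v_2,v_3) = \sum_{j=1}^{3} a_{ij}(x,t)\, w_j, \qquad a_{ij}(x,t) := \int_0^1 \partial_{u_j} f_i\bigl(\tau u + (1-\tau)v\bigr)\,d\tau,
\]
with $a_{ij} \in L^\infty(\Omega_T)$ and, crucially, $a_{ij} \geq 0$ whenever $i \neq j$ by the cooperative hypothesis. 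Thus $w_i$ satisfies $\partial_t w_i - d_i \Delta w_i \leq \sum_j a_{ij} w_j$ a.e.\ in $\Omega_T$ together with $w_i \leq 0$ on $\P_T$.

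Because $\Omega$ may be unbounded while only $L^\infty$ bounds are available, I introduce the exponential weight $\sigma(x) := e^{-\alpha \sqrt{1+|x|^2}}$ with $\alpha > 0$ to be chosen, noting that $\sigma \in L^2(\R^n)$ and $|\nabla \sigma| \leq \alpha\, \sigma$ pointwise. Testing the $i$-th inequality against the admissible function $(w_i)_+ \sigma^2$ and integrating over $\Omega \times (0,t)$ is legitimised by the regularity hypotheses $\partial_t w_i, \nabla^2 w_i \in L^2_{\mathrm{loc}}(\Omega_T)$; the spatial boundary integral vanishes because $(w_i)_+ \equiv 0$ on $\partial \Omega$ in the trace sense, and the initial contribution vanishes because $u_i(\cdot,0) \leq v_i(\cdot,0)$. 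Absorbing the gradient cross term $2 d_i \int \sigma (w_i)_+ \nabla(w_i)_+ \cdot \nabla \sigma$ into the dissipation via Young's inequality produces
\[
\tfrac{1}{2} \int_{\Omega} (w_i)_+^2 \sigma^2 \,\bigg|_0^t + \tfrac{d_i}{2} \int_0^t\!\!\int_{\Omega} |\nabla (w_i)_+|^2 \sigma^2 \leq 2 d_i \alpha^2 \int_0^t\!\!\int_{\Omega} (w_i)_+^2 \sigma^2 + \int_0^t\!\!\int_{\Omega} \sigma^2 (w_i)_+ \sum_{j=1}^{3} a_{ij} w_j.
\]

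The cooperative structure enters decisively in the last integrand: for each off-diagonal pair $i \neq j$, since $a_{ij} \geq 0$ and $(w_i)_+ \geq 0$, we have $a_{ij} (w_i)_+ w_j \leq a_{ij} (w_i)_+ (w_j)_+ \leq \tfrac{M}{2}\bigl((w_i)_+^2 + (w_j)_+^2\bigr)$ with $M := \max_{i,j} \|a_{ij}\|_{L^\infty(\Omega_T)}$, while the diagonal term is trivially $\leq M(w_i)_+^2$. Summing over $i=1,2,3$ and setting $\Phi(t) := \sum_{i=1}^{3} \int_\Omega (w_i)_+^2 \sigma^2\, dx$, which is finite because $w_i \in L^\infty$ and $\sigma \in L^2(\R^n)$, we obtain $\Phi(t) \leq K \int_0^t \Phi(\tau)\,d\tau$ with $\Phi(0) = 0$ and $K$ depending only on $M$, the $d_i$, and $\alpha$. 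Gronwall's lemma forces $\Phi \equiv 0$, hence $(w_i)_+ \sigma \equiv 0$, and since $\sigma > 0$ everywhere we conclude $u_i \leq v_i$ in $\Omega_T$ for each $i$.

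The main obstacle to expect is the unbounded-domain case: a compactly supported cutoff $\eta_R$ would produce an uncontrolled remainder of order $R^{n-2}$ coming from $\|w_i\|_\infty^2$ times the volume of the annulus $B_{2R}\setminus B_R$, so the choice of integrable weight $\sigma$ is essential. The other indispensable ingredient is the cooperative hypothesis $\partial_{u_j} f_i \geq 0$ for $i \neq j$, which is precisely what allows the off-diagonal coupling $a_{ij} w_j$ to be recast in terms of $(w_j)_+$ and thereby closes the Gronwall loop.
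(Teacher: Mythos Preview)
Your argument is correct and is precisely the weighted $L^2$ energy method of \cite[Prop.~52.21, 52.22]{QS07}, which is all the paper invokes for this lemma---no proof is given there, only the citation. One small point of rigor: with only $L^2_{\mathrm{loc}}$ regularity on an unbounded $\Omega$, testing directly against $(w_i)_+\sigma^2$ should strictly pass through an intermediate compactly supported cutoff $\eta_R$ before letting $R\to\infty$, the remainder terms vanishing thanks to $\sigma\in L^2(\R^n)$ and $w_i\in L^\infty$.
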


Lemma \ref{ComparisionPrinciple} above give us the comparision principle for parabolic systems of cooperative type, whose proof is standard and followed by \cite[Prop 52.21, 52.22]{QS07}.
\begin{lemma}\label{MonotoneSolution}
We consider the following system
\begin{equation}\label{CoParabolicSystem}
\left\{
\begin{split}
\partial_t u_i - d_i \Delta u_i &= f_i(u_1,u_2,u_3), \quad x\in \R^n,t>0, i=1,2,3,\\
u_i(x,0)&=v_{i}(x), \quad \text{for } i=1,2,3,
\end{split}
\right.
\end{equation}
where $f_1,f_2,f_3$ are $C^1$-functions such that 
\begin{equation}\label{AssumeFi}
\partial_{u_j}f_i\geqslant  0\quad \text{for } i\neq j.
\end{equation}
For $i=1,2,3$ the intial data $v_{i} \in L^{\infty}(\R^n)$ satisfies 
\begin{equation}\label{WeakData}
\Delta v_{i} + f_i(v_{1},v_{2},v_{3}) \geqslant 0
\end{equation}
 in the very weak sense if, for all nonnegative function $\psi \in C^2(\R^n)$, there holds 
\begin{equation}\label{WeakSense}
\int_{\R^n} \Big[ v_{i} \Delta \psi +f_i(v_{1},v_{2},v_{3}) \psi \Big] dx \geqslant 0.
\end{equation}
Therefore, we claim that for system \eqref{CoParabolicSystem} under assumption \eqref{AssumeFi}, and $v_1, v_2$ and $v_3$ satisfy \eqref{WeakData} in the very weak sense,  then 
\[ \partial_t u_i \geqslant 0\]
in  $\R^n \times (0,\infty)$ for all  $i=1,2,3$.
\end{lemma}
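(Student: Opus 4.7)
The plan is to combine a time-translation trick with the comparison principle from Lemma \ref{ComparisionPrinciple}. For fixed $h>0$, I set $U_i(x,t):=u_i(x,t+h)$; then $(U_1,U_2,U_3)$ again solves the cooperative system \eqref{CoParabolicSystem} on $\R^n\times(0,\infty)$. If one can verify the initial ordering $u_i(\cdot,h)\geqslant v_i(\cdot)=u_i(\cdot,0)$ for all $i$, then Lemma \ref{ComparisionPrinciple} applied to $(u_i)$ and $(U_i)$ yields $u_i(x,t+h)\geqslant u_i(x,t)$ for every $t\geqslant 0$. Dividing by $h$ and passing to the limit $h\downarrow 0$ (in the distributional sense, then a.e., using the local $L^2$ regularity of $\partial_t u_i$ assumed in the lemma) gives $\partial_t u_i\geqslant 0$. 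The whole argument thus reduces to establishing $u_i(x,t)\geqslant v_i(x)$ for all $(x,t)\in\R^n\times[0,\infty)$.

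\textbf{Comparison against the stationary candidate.} To prove $u_i(\cdot,t)\geqslant v_i$, I would view $(v_1,v_2,v_3)$ as a \emph{time-independent} candidate sub-solution of \eqref{CoParabolicSystem}. In the purely classical setting, the inequality $\partial_t v_i-d_i\Delta v_i-f_i(v_1,v_2,v_3)=-d_i\Delta v_i-f_i(v_1,v_2,v_3)\leqslant 0$ is exactly \eqref{WeakData}, and Lemma \ref{ComparisionPrinciple}, applied on $B_R\times(0,T)$ with the ordering $v_i\leqslant u_i$ on the parabolic boundary (trivial at $t=0$, and obtained on the lateral piece through an exhaustion $R\to\infty$ together with the global $L^{\infty}$ bounds on $(u_i)$ and $(v_i)$), would yield the desired ordering on all of $\R^n\times(0,\infty)$.

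\textbf{Handling the very weak hypothesis and main obstacle.} The genuine difficulty is that \eqref{WeakData} is imposed only in the very weak sense \eqref{WeakSense}, so $-d_i\Delta v_i-f_i(v_1,v_2,v_3)\leqslant 0$ holds distributionally rather than pointwise. I would bridge the gap by mollification: set $v_i^{\varepsilon}:=v_i\ast\rho_{\varepsilon}$, test \eqref{WeakSense} against $\rho_{\varepsilon}(x-\cdot)$ to get the pointwise estimate $\Delta v_i^{\varepsilon}(x)+[f_i(v_1,v_2,v_3)]^{\varepsilon}(x)\geqslant 0$, and split $[f_i(v)]^{\varepsilon}=f_i(v^{\varepsilon})+\omega_i^{\varepsilon}$, where $\omega_i^{\varepsilon}\to 0$ in $L^{\infty}_{\mathrm{loc}}$ as $\varepsilon\to 0$ thanks to the continuity of $f_i$ and the uniform $L^{\infty}$ bound on $v$. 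The functions $v_i^{\varepsilon}$ are then genuine smooth sub-solutions up to the corrector $\omega_i^{\varepsilon}$; comparing them with $u_i$ on finite time slabs and absorbing $\omega_i^{\varepsilon}$ into a linear drift yields $u_i(x,t)\geqslant v_i^{\varepsilon}(x)-Ct\|\omega_i^{\varepsilon}\|_{L^{\infty}_{\mathrm{loc}}}$, and sending $\varepsilon\to 0$ produces $u_i\geqslant v_i$. The main obstacle lies precisely in this mollification step: one must ensure that the commutator $[f_i(v)]^{\varepsilon}-f_i(v^{\varepsilon})$ vanishes uniformly enough for Lemma \ref{ComparisionPrinciple} to accommodate it on the unbounded domain $\R^n$, and it is here that the cooperative structure \eqref{AssumeFi} is indispensable, as it allows the three componentwise comparisons to be performed simultaneously without losing the sign information.
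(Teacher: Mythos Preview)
The paper gives no proof of this lemma beyond citing \cite[Prop.~52.20]{QS07} and asserting that the same argument extends from bounded domains to $\R^n$; your time-shift-plus-comparison strategy (first compare the solution against the stationary sub-solution $(v_1,v_2,v_3)$, then compare $u_i(\cdot,t+h)$ against $u_i(\cdot,t)$) is precisely the standard route used in that reference, and your mollification step is the natural device to reconcile the merely very-weak hypothesis \eqref{WeakSense} with the pointwise-a.e.\ requirement in Lemma~\ref{ComparisionPrinciple}. The proposal is correct and coincides with the paper's intended argument.
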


 Lemma \ref{MonotoneSolution} above can be seen as an extension of \cite[Prop 52.20]{QS07}, which was stated in bounded domain. Similar arguments  still guarantee the monotonicity of solutions $u_1, u_2$ and $u_3$ along time in the whole space.

\subsection{Proof of Theorem \ref{MainThmSupercriticalCase}}
Before proving Theorem \ref{MainThmSupercriticalCase}, we need to establish the following result.
\begin{lemma}
Let $p>\pc$ and  $\epsilon_0, \overline r_i, \overline r_i(\epsilon)  (i=1,2,3)$ be stated in Lemma \ref{LemmaBorderline}. Two triplets ($\overline u_{\rm in,\epsilon},\overline v_{\rm in,\epsilon}, \overline w_{\rm in,\epsilon} $) and ($\underline u_{\rm out}, \underline v_{\rm out}, \underline w_{\rm out}$) are defined through \eqref{InnerSuperSolution} and \eqref{OuterSubSolution}. Then, the functions $\underline u, \underline v, \underline w$ and $\overline u_{\epsilon}, \overline v_{\epsilon}, \overline w_{\epsilon}$ appeared in Lemma \ref{LemmaSubSolution} and Lemma \ref{LemmaSuperSolution} satisfy
\begin{equation}\label{OrderSolution}
\underline u \leqslant \overline u_{\epsilon} ,\quad 
\underline v \leqslant \overline v_{\epsilon} \quad \text{and} \quad
\underline w \leqslant \overline w_{\epsilon} 
\end{equation}
for small enough $\epsilon>0$  in $\R^n$. 
\end{lemma}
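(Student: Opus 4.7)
The plan is to establish the three pointwise inequalities by partitioning $\R^n$ according to the radii $\underline r_i$, $\overline r_i$, and $\overline r_i(\epsilon)$ for $i=1,2,3$. For each of the three comparisons, I distinguish three essentially different subregions: the innermost region where $\underline u$ (respectively $\underline v$, $\underline w$) vanishes, in which the inequality is automatic; the far outer region where both sides are given by the explicit algebraic pieces $\underline u_{\rm out}$ and $\overline u_{\rm out}$; and the intermediate annulus where the sub-solution has already switched to its outer form while the super-solution is still the regularized inner piece $\overline u_{\rm in,\epsilon}$.

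In the far outer region the comparison is immediate from the formulas. For $u$ one has $\overline u_{\rm out}(r) - \underline u_{\rm out}(r) = c r^{-k} \geq 0$, and the analogous identities for $v$ and $w$ involve the prefactors $k(n-2-k)$ and $k(k+2)(n-2-k)(n-4-k)$, which are strictly positive because $l < k < k_0 < n-6$. That these identities apply starting at the correct thresholds rests on the ordering established in Lemma \ref{LemmaBorderline} together with the fact that $\underline r_1 \leq \overline r_1$, which was already recorded in the proof of Lemma \ref{LemmaOuterSuperSolution}.

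The decisive step, and the main obstacle, is the intermediate region. The key observation is that by Lemma \ref{LemmaBorderline} the thresholds $\overline r_i(\epsilon)$ are bounded above uniformly in $\epsilon$ by $\overline r_3 + 1$, so every intermediate annulus lies inside the fixed compact set $K := \{\underline r_1 \leq |x| \leq \overline r_3 + 1\}$. On $K$ the inner super-solution pieces $\overline u_{\rm in,\epsilon}(r) = L(r^2+\epsilon)^{-m/2}$, $\overline v_{\rm in,\epsilon}$, $\overline w_{\rm in,\epsilon}$ converge uniformly as $\epsilon \searrow 0$ to the singular profiles $L r^{-m}$, $m(n-2-m)L r^{-m-2}$, $m(m+2)(n-2-m)(n-4-m)L r^{-m-4}$ respectively. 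Each of these limit profiles strictly dominates the corresponding $\underline u_{\rm out}, \underline v_{\rm out}, \underline w_{\rm out}$ by a gap of the form $br^{-l}$, $l(n-2-l)b r^{-l-2}$, $l(l+2)(n-2-l)(n-4-l)b r^{-l-4}$ that is bounded below on $K$ by a strictly positive constant depending only on $b, l$ and the radii. Consequently, choosing $\epsilon$ small enough forces $\overline u_{\rm in,\epsilon} \geq \underline u_{\rm out}$ on $K$, and similarly for $v$ and $w$. Taking the minimum of the finitely many $\epsilon$-thresholds and gluing the three subregions yields \eqref{OrderSolution}.

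The conceptual subtlety is that $\overline u_{\rm in,\epsilon}$ sits strictly below the singular profile $L r^{-m}$ since the $\epsilon$-smoothing lowers it pointwise, while $\underline u_{\rm out}$ also lies below that profile, so no direct monotone comparison is available. What saves the argument is precisely the positive algebraic correction $-b r^{-l}$ engineered into the sub-solution: it opens a uniform gap to the singular profile on every compact set, and this gap absorbs the $\epsilon$-perturbation of the inner super-solution once $\epsilon$ is sufficiently small. This is why the present statement is restricted to $\epsilon$ small, and it is the only place in the argument where that smallness is actually used.
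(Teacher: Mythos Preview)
Your proof is correct and follows essentially the same three-region decomposition as the paper: trivial where the sub-solution vanishes, direct algebraic comparison $\overline u_{\rm out}-\underline u_{\rm out}=cr^{-k}\geqslant 0$ (and analogues) in the far outer region, and the intermediate annulus handled via the inner super-solution. Your uniform-convergence-on-the-compact-set argument for the intermediate annulus is in fact more detailed than the paper's, which disposes of the inequality $L(|x|^2+\epsilon)^{-m/2}\geqslant L|x|^{-m}-b|x|^{-l}$ with the terse phrase ``due to $m<l$''; your justification via the uniform bound $\overline r_i(\epsilon)<\overline r_3+1$ from Lemma~\ref{LemmaBorderline} and the strictly positive gap $br^{-l}$ on $[\underline r_1,\overline r_3+1]$ is precisely what makes that step rigorous and explains why smallness of $\epsilon$ is needed.
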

\begin{proof}
Since $\overline u_{\epsilon}$ is always nonnegative, there is a need to prove the first inequality of \eqref{OrderSolution} as $|x| \geqslant \underline r_1$. In the case $|x| \geqslant \overline r_1(\epsilon)$, we have $\underline u(x) = \underline u_{\rm out}(|x|)$. Hence,
\[
\overline u_{\epsilon}(x) =  \overline u_{\rm out}(|x|)= L|x|^{-m}-b|x|^{-l}+c|x|^{k} \geqslant  L|x|^{-m}-b|x|^{-l} = \underline u(x).
\]
When $\underline r_1 \leqslant |x| <\overline r_1(\epsilon)$, due to $m<l$ we have
\begin{equation}\label{IneOrder}
\overline u_{\epsilon}(x) = \overline u_{\rm in,\epsilon}(|x|) = L(|x|^2+\epsilon)^{-\frac{m}{2}} \geqslant L|x|^{-m} -b|x|^{-l} = \underline u(x).
\end{equation}
These establish $\underline u \leqslant \overline u_{\epsilon}$. The others of \eqref{OrderSolution} can be proved similarly.
\end{proof}

We are now in position to prove Theorem \ref{MainThmSupercriticalCase}.
Let the triplets $(\underline u, \underline v, \underline w)$ and ($\overline u_{\epsilon}, \overline v_{\epsilon}, \overline w_{\epsilon})$ are defined as in Lemma \ref{LemmaSubSolution} and \ref{LemmaSuperSolution}. 
We consider the parabolic system 
\begin{equation}\label{ParabolicSystem}
\left\{
\begin{split}
&U_t = \Delta U+V, \quad x\in \R^n, t>0,\\
&V_t=\Delta V+W, \quad x\in \R^n, t>0,\\
&W_t =\Delta W +|U|^{p-1}U, \quad x\in \R^n, t>0,\\
&U(x,0)=\underline u(x), V(x,0) = \underline v(x), W(x,0) = \underline w(x) \quad x\in \R^n. 
\end{split}
\right.
\end{equation}

It could be realized that the PDE system \eqref{ParabolicSystem} is of cooperative type and the initial datas are sub-solutions of \eqref{LaneEmdenSystem}. Hence, we apply  Lemma \ref{ComparisionPrinciple} for 
\[
f_1(x_1,x_2,x_3) = x_2, \quad f_2(x_1,x_2,x_3) = x_3 \quad \text{and } f_3(x_1,x_2,x_3) = |x_1|^{p-1}x_1
\]
to obtain that \eqref{ParabolicSystem} has a globally radially symmetric classical solution along with the following comparision principle,
\begin{equation*}
\underline u(x) \leqslant U(x,t) \leqslant \overline u_{\epsilon}(x),\quad \underline v(x) \leqslant V(x,t) \leqslant \overline v_{\epsilon}(x) \quad \text{ and } \quad \underline w(x) \leqslant W(x,t) \leqslant \overline w_{\epsilon}(x)
\end{equation*}
for all $x\in \R^n, t>0$. Moreover, 
from Lemma \ref{MonotoneSolution}, the initial data also implies the monotonicity in time of solution, i.e.
\[ 
U_t \geqslant 0, \quad V_t \geqslant 0, \quad \text{and} \quad W_t \geqslant 0 \]
in $\R^n \times (0,+\infty)$.
As a consequence of the  standard regularity theory, we  get
\[
U(\cdot,t) \to u, \quad V(\cdot, t) \to v, \quad \text{and} \quad W(\cdot,t) \to w 
\]
in $C_{loc}^2(\R^n)$  sense as $ t\to +\infty$, where $u,v$ and $w$ are smooth limit functions. Thanks to  the  monotocity of $U,V,W$ along time  again and parabolic regularity theory, there exists a sequence of times $t_j \to \infty$ such that $U_t(\cdot,t_j), V_t(\cdot,t_j), W_t(\cdot,t_j)$ converge to zero in $C_{loc}^0(\R^n)$ as $j \to +\infty$. At time $t=t_j$, we have
\begin{equation*}
\left\{
\begin{split}
U_t (x,t_j)&= \Delta U (x, t_j)+V(x, t_j), &\quad x\in \R^n,\\
V_t(x, t_j)&=\Delta V(x, t_j)+W(x, t_j), &\quad x\in \R^n,\\
W_t(x, t_j)&=\Delta W(x, t_j) +|U(x, t_j)|^{p-1}U(x, t_j), &\quad x\in \R^n.
\end{split}
\right.
\end{equation*}
Letting $j \to \infty$ to show that the limit functions $u,v,w$  is a globally radial solution of the following stationary problem
\begin{equation*}
\left\{
\begin{split}
0&=\Delta u + v, \\
0&=\Delta v +w,\\
0&=\Delta w + |u|^{p-1}u
\end{split}
\right.
\end{equation*}
in $\R^n$. We recall the comparision principle  to get that $u$ is strictly positive in $\R^n$. Hence, $u$ is a smooth positive solution of 
\[
(-\Delta)^3 u = u^p \quad \text{in } \R^n,
\]
which is bounded from below as 
\[
L|x|^{-m}-b|x|^{-l} \leqslant u(x) \leqslant L|x|^{-m}-b|x|^{-l}+c|x|^{-k},
\] 
for large $|x|$. Thus, Theorem  \ref{MainThmSupercriticalCase} is complete. 

\subsection{Proof of Theorem \ref{MainThmCriticalCase}.}
As in the supercritical case, we need to establish the following result for the critical case. 
\begin{lemma}\label{LemCompaSubSuper}
Let $p=\pc$ and  $\epsilon_0, \overline r_i, \overline r_i(\epsilon) (i=1,2,3)$ be found  in Lemma \ref{BorderlineEps}. The functions $\overline u_{\rm in,\epsilon},\overline v_{\rm in,\epsilon}, \overline w_{\rm in,\epsilon}$ and $\underline u_{\rm out}, \underline v_{\rm out}, \underline w_{\rm out}$ are defined through \eqref{InnerSuperSolution} and \eqref{OuterSubSolCase2}. Then, the triplets $(\underline u, \underline v, \underline w)$ and ($\overline u_{\epsilon}, \overline v_{\epsilon}, \overline w_{\epsilon})$ appeared in Lemma \ref{LemmaSubSolCase2} and Lemma \ref{LemmaSuperSolutionCase2} satisfy
\begin{equation*}
\underline u \leqslant \overline u_{\epsilon} ,\quad 
\underline v \leqslant \overline v_{\epsilon} \quad \text{and} \quad
\underline w \leqslant \overline w_{\epsilon} 
\end{equation*}
for small enough $\epsilon>0$ in $\R^n$. 
\end{lemma}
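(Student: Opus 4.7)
The plan is to mimic the comparison argument used just before Lemma \ref{LemCompaSubSuper} in the supercritical case, the only new feature being that the logarithmic corrections from \eqref{OuterSubSolCase2} and \eqref{FormulaOuterSuperSolCase2} have to be tracked. Since $\underline u, \underline v, \underline w$ vanish on the inner balls of radii $\underline r_1, \underline r_2, \underline r_3$, while $\overline u_\epsilon, \overline v_\epsilon, \overline w_\epsilon$ are nonnegative everywhere by Lemma \ref{LemmaSuperSolutionCase2}, each of the three inequalities is automatic inside the corresponding inner ball; it remains to check two annular regions per coordinate. The parameters would be selected in the hierarchical order already established: fix $R$ large (Lemma \ref{LemmaComparisionUout}), then $c$ large (Lemmas \ref{LemOuterSupSol} and \ref{BorderlineEps}), then $\epsilon_0$ small; in particular $c$ can be chosen so that $\overline r_i > \underline r_i$ for $i=1,2,3$.

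In the outer annulus $|x| \geq \overline r_1(\epsilon)$, both sides are given by the outer formulas and a direct subtraction gives
\[
\overline u_\epsilon(x) - \underline u(x) = c|x|^{-l}\left(\log\frac{|x|}{R}\right)^{\beta} > 0,
\]
which is positive since $|x| > \overline r_1(\epsilon) > \overline r_1 > R$. The corresponding differences $\overline v_{\rm out} - \underline v_{\rm out}$ and $\overline w_{\rm out} - \underline w_{\rm out}$ are obtained by subtracting \eqref{OuterSubSolCase2} from \eqref{FormulaOuterSuperSolCase2}; only the $c$-dependent brackets survive. Their positivity on the relevant outer annuli is exactly the sign information extracted in Lemma \ref{BorderlineEps}: the dominant term grows like $(\log(|x|/R))^\beta$ while the negative corrections decay like $(\log(|x|/R))^{\beta-k}$ with $k \geq 1$, and the constraints already imposed on $c$ force the dominant term to win beyond $\overline r_2(\epsilon)$ and $\overline r_3(\epsilon)$.

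The genuinely new step is the intermediate annulus $\underline r_1 \leq |x| < \overline r_1(\epsilon)$, where $\underline u(x) = L|x|^{-m} - b|x|^{-l}\log(|x|/R)$ and $\overline u_\epsilon(x) = L(|x|^2 + \epsilon)^{-m/2}$. The desired inequality rearranges to
\[
b|x|^{-l}\log\frac{|x|}{R} \geq L|x|^{-m} - L(|x|^2 + \epsilon)^{-m/2}.
\]
By Lemma \ref{BorderlineEps} the variable $|x|$ is confined to the compact interval $[\underline r_1, \overline r_1 + 1]$; on this interval the left-hand side is bounded below by a strictly positive constant (because $\underline r_1 > R$ by Lemma \ref{LemmaComparisionUout}, hence $\log(|x|/R) \geq \log(\underline r_1/R) > 0$), while the right-hand side is $O(\epsilon)$ uniformly in $|x|$ by a first-order Taylor expansion of $r \mapsto (r^2+\epsilon)^{-m/2}$ at $\epsilon = 0$. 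Shrinking $\epsilon_0$ closes the inequality. Identical compactness arguments dispose of the intermediate annuli $\underline r_2 \leq |x| < \overline r_2(\epsilon)$ and $\underline r_3 \leq |x| < \overline r_3(\epsilon)$, where the log-corrections appearing in $\underline v_{\rm out}, \underline w_{\rm out}$ are again bounded uniformly on a compact interval and therefore controllable by the $\epsilon$-dependent gap of the inner part.

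The main obstacle I anticipate is book-keeping rather than mathematical: one must verify that a single tuple $(R, c, \epsilon_0)$ is simultaneously admissible for Lemmas \ref{LemmaComparisionUout}, \ref{LemOuterSupSol}, \ref{BorderlineEps} and the three compactness steps above. The hierarchy $R \to c \to \epsilon$ is non-circular, so each constraint can be met by shrinking/enlarging in turn, but writing this out carefully is the tedious part. A secondary subtlety is that the outer difference $\overline v_{\rm out} - \underline v_{\rm out}$ and $\overline w_{\rm out} - \underline w_{\rm out}$ contain sign-indefinite terms of the form $-\beta(n-2-2l)(\log(r/R))^{\beta-1}$; handling these is not conceptually harder than the corresponding bound for $\varphi_2, \varphi_3$ in Lemma \ref{BorderlineEps} and can be imported essentially verbatim. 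Since the argument is so close to the supercritical one, the proof would end with a remark that the remaining details are analogous and may be omitted.
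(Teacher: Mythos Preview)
Your proposal is correct and follows essentially the same three-region decomposition as the paper: trivial on the inner ball, a compactness-plus-$O(\epsilon)$ argument on the intermediate annulus, and direct subtraction on the outer region with additional largeness constraints on $c$ to force the dominant $(\log(r/R))^\beta$ term to win. One small imprecision: the positivity of $\overline v_{\rm out}-\underline v_{\rm out}$ and $\overline w_{\rm out}-\underline w_{\rm out}$ is \emph{not} literally the sign information from Lemma~\ref{BorderlineEps} (that lemma concerns $\overline{\,\cdot\,}_{\rm in,\epsilon}-\overline{\,\cdot\,}_{\rm out}$, not $\overline{\,\cdot\,}_{\rm out}-\underline{\,\cdot\,}_{\rm out}$); the paper checks these separately by imposing further conditions on $c$, which is exactly what you anticipate in your closing paragraph.
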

\begin{proof}
Let $c_1$ and $c_2$ be constants found in Lemma \ref{LemOuterSupSol} and \ref{BorderlineEps}. Besides, $R,\underline r_1,\underline r_2$ and $\underline r_3$ are stated in Lemma \ref{LemmaComparisionUout}. Choosing large enough constant such that  $c > \max \{c_1,c_2\}$ and that 
\[
\left(\frac{c}{b} \right)^{\frac{1}{1-\beta}} > \log \frac{ \underline r_3}{R}, \qquad l(n-2-l)  \left( \frac{c}{b} \right)^{\frac{1}{1-\beta}} > \beta(n-2-2l),
\]
and that 
\begin{align*}
 l (l+2) (n-2-l) (n-4-l) \left( \frac{c}{b} \right)^{\frac{4}{1-\beta}} 
&-  \beta(n-2-2l) (l+2)(n-4-l) \left( \frac{c}{b} \right)^{\frac{3}{1-\beta}} \\
& > \beta(1-\beta)(2-\beta)(3-\beta).
\end{align*}
Due to the choice $c$, it is not hard to see that  $\underline r_3 < \overline r_1$.
Hence, \eqref{OrderRunder} and  \eqref{OrderRover} can be joined into 
\[
\underline r_1 < \underline r_2 < \underline r_3 < \overline r_1 < \overline r_1(\epsilon) < \overline r_2 < \overline r_2(\epsilon)< \overline r_3 < \overline r_3(\epsilon) < \overline r_3 +1. 
\]
That implies, 
\begin{equation} \overline u_{\epsilon}(x) - \underline u(x) = 
\begin{cases}
\overline u_{\rm in,\epsilon}(|x|),  \quad&  0\leqslant |x| < \underline r_1,\\ 
\overline u_{\rm in,\epsilon}(|x|) - \underline u_{\rm out}(|x|), \quad&  \underline r_1\leqslant |x| < \overline r_1(\epsilon),\\ 
\overline u_{\rm out}(|x|) - \underline u_{\rm out}(|x|),  \quad&  \overline r_1(\epsilon) \leqslant |x|, 
\end{cases}
\end{equation}
\begin{equation} \overline v_{\epsilon}(x) - \underline v(x) = 
\begin{cases}
\overline v_{\rm in,\epsilon}(|x|),  \quad&  0\leqslant |x| < \underline r_2,\\ 
\overline v_{\rm in,\epsilon}(|x|) - \underline v_{\rm out}(|x|), \quad&  \underline r_2\leqslant |x| < \overline r_2(\epsilon),\\ 
\overline v_{\rm out}(|x|) - \underline v_{\rm out}(|x|),  \quad&  \overline r_2(\epsilon) \leqslant |x|, 
\end{cases}
\end{equation}
and 
\begin{equation} \overline w_{\epsilon}(x) - \underline w(x) = 
\begin{cases}
\overline w_{\rm in,\epsilon}(|x|),  \quad&  0\leqslant |x| < \underline r_3,\\ 
\overline w_{\rm in,\epsilon}(|x|) - \underline w_{\rm out}(|x|), \quad&  \underline r_3\leqslant |x| < \overline r_3(\epsilon),\\ 
\overline w_{\rm out}(|x|) - \underline w_{\rm out}(|x|),  \quad&  \overline r_3(\epsilon) \leqslant |x|. 
\end{cases}
\end{equation}
A repetition to \eqref{IneOrder} deduce that 
\[
\underline u_{\rm out}(r) < L (r^2+\epsilon)^{-\frac{m}{2}} 
\]
for all  $r \in [\underline r_1, \overline r_3+1]$,  that
\[
\underline v_{\rm out}(r) < m(n-2-m)L (r^2+\epsilon)^{-\frac{m+2}{2}}
\] 
for all $ r \in [\underline r_2, \overline r_3+1]$ and that  
\[
\underline w_{\rm out}(r) < m(m+2)(n-2-m)(n-4-m)L (r^2+\epsilon)^{-\frac{m+4}{2}}
\] for all $r \in [\underline r_3, \overline r_3+1]$. That means $\overline u_{\epsilon} - \underline u_{\rm out}, \overline v_{\epsilon} - \underline v_{\rm out} $ and $\overline w_{\epsilon} - \underline w_{\rm out}$ have a positive sign in the interval $[\overline r_1, \overline r_3+1], [\overline r_2, \overline r_3+1]$ and $[\overline r_3, \overline r_3+1]$, respectively. 

In order to complete Lemma \ref{LemCompaSubSuper}, we have to show that $\overline u_{\rm out}- \underline u_{\rm out}, \overline v_{\rm out}- \underline v_{\rm out}$ and $\overline w_{\rm out}- \underline w_{\rm out}$ are nonnegative in the correspondind outer domain. 
First, we obviously see that
\[
\overline u_{\rm out}(r) - \underline u_{\rm out}(r) = c r^{-l} \left(\log \frac{r}{R} \right)^{\beta} >0, 
\]
for $r > R$.  Besides, we also have that
\[\begin{split}
&\overline v_{\rm out}(r) - \underline v_{\rm out}(r) \\
&= c r^{-l-2} \left[ \begin{split}
&l(n-2-l)  \left(\log \frac{r}{R} \right)^{\beta}\\
& -\beta(n-2-2l)  \left(\log \frac{r}{R} \right)^{\beta-1} \\
& +\beta(1-\beta)  \left(\log \frac{r}{R} \right)^{\beta-2} \end{split} \right]\\
&> cr^{-l-2}  \left(\log \frac{r}{R} \right)^{\beta-1}  \Big[ l(n-2-l)  \log\frac{\overline r_1}{R} -\beta(n-2-2l) \Big] \\
&>0
\end{split}\]
for all $r>\overline r_1$. Next, we need to verify that $\overline w_{\rm out} - \underline w_{\rm out}$ does not change sign in $(\overline r_3(\epsilon), \infty)$. Indeed,
\begin{align*}
&\overline w_{\rm out}(r) -\underline w_{\rm out}(r) \\
&= cr^{-l-4}
\left[ \begin{aligned}
&  l (l+2) (n-2-l) (n-4-l) \left( \log \frac{r}{R} \right)^{\beta} \\
&  - \beta(n-2-2l) (l+2)(n-4-l) \left( \log \frac{r}{R} \right)^{\beta-1} \\
& +\beta(1-\beta)   (l(n-2-l)+(l+2)(n-4-l)) \left( \log \frac{r}{R} \right)^{\beta-2}  \\
&  + \beta(1-\beta)(2-\beta) (n-2-2l) \left( \log \frac{r}{R} \right)^{\beta-3} \\
&   - \beta(1-\beta)(2-\beta)(3-\beta) \left( \log \frac{r}{R} \right)^{\beta-4} 
\end{aligned} \right] \\
&> cr^{-l-4} \left(\log \frac{r}{R} \right)^{\beta-4} 
\left[ \begin{aligned}
& l (l+2) (n-2-l) (n-4-l) \left(\log \frac{\overline r_1}{R} \right)^4 \\
 &- \beta(n-2-2l) (l+2)(n-4-l) \left( \log \frac{\overline r_1}{R} \right)^3\\
& - \beta(1-\beta)(2-\beta)(3-\beta) 
\end{aligned} \right]\\
&>0.
\end{align*}
Hence, we complete proof of Lemma \ref{LemCompaSubSuper}.
\end{proof}

Theorem \ref{MainThmCriticalCase} can be deduced by copying proof of Theorem \ref{MainThmSupercriticalCase} with a simple modification. The super-solutions $\overline u_{\epsilon},\overline v_{\epsilon}, \overline w_{\epsilon}$ are taken from Lemma \ref{LemmaSuperSolutionCase2}, not from Lemma \ref{LemmaSuperSolution}.

\section{Acknowledgments}
The author  would like to  thank Qu\^oc Anh Ng\^o for suggesting this problem. Thanks also go to Ninh Van Thu for his encouragement.

\end{document}